\newtheorem{theorem}{{\bf Theorem}}[section]
\newtheorem{preexample}[theorem]{{\bf Example}}
\newenvironment{example}{\begin{preexample}\rm{\hspace{-0.5
               em}{\bf}}}{\end{preexample}}
\newtheorem{prelem}[theorem]{{\bf Lemma}}
\newenvironment{lemma}{\begin{prelem}{\hspace{-0.5
               em}{\bf}}}{\end{prelem}}
\newtheorem{preprop}[theorem]{{\bf Proposition}}
\newenvironment{proposition}{\begin{preprop}{\hspace{-0.5
               em}{\bf}}}{\end{preprop}}
\newtheorem{prequ}[theorem]{{\bf Question}}
\newtheorem{predef}[theorem]{{\bf Definition}}
\newenvironment{definition}{\begin{predef}\rm{\hspace{-0.5
               em}{\bf}}}{\end{predef}}
\newtheorem{precor}[theorem]{{\bf Corollary}}
\newenvironment{corollary}{\begin{precor}{\hspace{-0.5
               em}{\bf}}}{\end{precor}}
\newtheorem{preconj}[theorem]{{\bf Conjecture}}
\newtheorem{preex}[theorem]{{\bf Exercise}}
\newtheorem{prerem}[theorem]{{\bf Remark}}
\newenvironment{remark}{\begin{prerem}\rm{\hspace{-0.5
               em}{\bf}}}{\end{prerem}}
\def\l@subsection{\@tocline{2}{0pt}{2.5pc}{5pc}{}} 
\numberwithin{equation}{section}
\title{Real Entropy Rigidity under Quasi-conformal Deformations}
\author{Khashayar Filom}
\address{Department of Mathematics,
  University of Michigan;
 530 Church St, Ann Arbor, MI 48109,
USA}
\email{filom@umich.edu}
\begin{document}

\begin{abstract}
We set up a real entropy function $h_\Bbb{R}$ on the space $\mathcal{M}'_d$ of Möbius conjugacy classes of  real rational maps of degree $d$
by assigning to each class  the real entropy of a representative $f\in\Bbb{R}(z)$; namely, the topological entropy of its restriction 
$f\restriction_{\hat{\Bbb{R}}}$
to the real circle. 
We prove a  rigidity result stating that $h_\Bbb{R}$ is locally constant on the subspace determined by real maps 
quasi-conformally conjugate to $f$. As examples of this result, we analyze real analytic stable families of hyperbolic and  flexible Latt\`es maps with real coefficients along with numerous families of degree $d$ real maps of real entropy $\log(d)$. The latter discussion moreover entails a complete classification of maps of maximal real entropy.
\end{abstract}
\maketitle

\renewcommand{\baselinestretch}{0.9}\normalsize
\small
\tableofcontents
\normalsize
\renewcommand{\baselinestretch}{1}\normalsize

\section{Introduction}\label{introduction}
This article focuses on the entropy of real rational maps as a function defined on an appropriate moduli space.  It is well known that  a rational map $f:\hat{\Bbb{C}}\rightarrow\hat{\Bbb{C}}$ of degree $d\geq 2$ has topological entropy $\log(d)$ and admits a unique measure of maximal entropy $\mu_f$ whose support is the Julia set $\mathcal{J}(f)$ \cite{MR684138,MR736568,MR736567}. 
In contrast, for $f\in\Bbb{R}(z)$, the topological entropy 
\begin{equation}\label{real entropy 1}
h_{\Bbb{R}}(f):=h_{\rm{top}}\left(f\restriction_{\hat{\Bbb{R}}}:\hat{\Bbb{R}}\rightarrow\hat{\Bbb{R}}\right)
\end{equation}
of the induced dynamics  on the invariant circle $\hat{\Bbb{R}}:=\Bbb{R}\cup\{\infty\}$ can take any value between 
$0$ and $\log\left(\deg f\right)$.
In this paper, we study the \textit{real entropy} $h_{\Bbb{R}}(f)$
both in relation with the dynamics of the ambient map $f:\hat{\Bbb{C}}\rightarrow\hat{\Bbb{C}}$ and also as $f$ varies in a family of real rational maps.  Our main goal is to prove a \textit{rigidity} statement showing that the real entropy is preserved in  a family of quasi-conformally conjugate real maps. 
Let 
$$\mathcal{M}_d(\Bbb{C}):={\rm{Rat}}_d(\Bbb{C})\big/{\rm{PGL}}_2(\Bbb{C})$$
be the moduli space of one-dimensional holomorphic dynamical systems of degree $d$ where the group ${\rm{PGL}}_2(\Bbb{C})$ of Möbius transformations acts by conjugation on the space ${\rm{Rat}}_d(\Bbb{C})$ of rational maps   of degree $d$.
Consider the real subvariety 
$$\mathcal{M}'_d:={\rm{PGL}}_2(\Bbb{C}).\left({\rm{Rat}}_d(\Bbb{R})\right)\big/{\rm{PGL}}_2(\Bbb{C})$$
of dimension $2d-2$ formed by classes with real representatives. The subvariety 
$\mathcal{M}'_d$
is contained in the real locus  $\mathcal{M}_d(\Bbb{R})$
of the moduli space. We have equality $\mathcal{M}'_d=\mathcal{M}_d(\Bbb{R})$ if and only if $d$ is even; see Proposition \ref{antipodal}.

\begin{theorem}\label{temp1}
The real entropy \eqref{real entropy 1} defines a continuous and surjective function 
$$h_\Bbb{R}:\mathcal{M}'_d-\mathcal{S}'\rightarrow \left[0,\log(d)\right]$$
that for every $f\in\Bbb{R}(z)$ is constant on connected components of   $\mathcal{M}'_d\bigcap{\rm{M}}(f)-\mathcal{S}'$. 
\end{theorem}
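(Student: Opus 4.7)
The plan decomposes into three tasks: well-definedness of $h_{\Bbb{R}}$ on $\mathcal{M}'_d$, continuity and surjectivity onto $[0,\log(d)]$, and rigidity along quasi-conformal classes---the last being the substantive claim.

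For well-definedness, if $f, g \in \Bbb{R}(z)$ represent the same class in $\mathcal{M}'_d$, the conjugating Möbius transformation can be adjusted (possibly after composing with complex conjugation $c : z \mapsto \bar z$) so as to preserve $\hat{\Bbb{R}}$; the induced topological conjugacy on the circle preserves $h_{\rm top}$. That this adjustment is always possible is essentially the content of Proposition \ref{antipodal}. For continuity on $\mathcal{M}'_d - \mathcal{S}'$, I would invoke the Misiurewicz--Szlenk identity $h_{\Bbb{R}}(f) = \limsup_n \tfrac{1}{n}\log \ell(f^n\restriction_{\hat{\Bbb{R}}})$, where $\ell$ denotes the lap number; outside a degeneracy locus $\mathcal{S}'$ capturing maps whose piecewise-monotonic combinatorics on $\hat{\Bbb{R}}$ is unstable, these lap numbers depend continuously on $f$. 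Surjectivity then follows from inspecting an explicit real one-parameter family---for example $\lambda T_d$ with $\lambda \in [0,1]$ and $T_d$ the Chebyshev polynomial of degree $d$---whose real entropy continuously fills $[0,\log(d)]$ by classical interval kneading theory.

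The rigidity assertion is the heart of the matter and requires the real form of the quasi-conformal deformation theory of $f$. Locally near $[f]$, the quasi-conformal class $M(f)$ is parameterized by a finite-dimensional Teichmüller space of integrable $f$-invariant Beltrami differentials on $\hat{\Bbb{C}}$, modulo infinitesimally trivial ones. Complex conjugation induces an antiholomorphic involution $\iota$ on this space whose fixed locus corresponds to real-symmetric deformations satisfying $c^*\mu = \bar\mu$. I would argue that each connected component $\mathcal{C}$ of $\mathcal{M}'_d \cap {\rm M}(f) - \mathcal{S}'$ is parameterized, modulo the action of real Möbius transformations, by a connected component of the fixed locus of $\iota$. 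Given $f_0, f_1 \in \mathcal{C}$, concatenating real analytic paths of real-symmetric Beltrami differentials and integrating by the measurable Riemann mapping theorem produces a family of quasi-conformal homeomorphisms $\varphi_t$ with $\varphi_t \circ f_0 = f_t \circ \varphi_t$ and $\varphi_t \circ c = c \circ \varphi_t$. The second identity forces $\varphi_t(\hat{\Bbb{R}}) = \hat{\Bbb{R}}$, making $\varphi_t\restriction_{\hat{\Bbb{R}}}$ a topological conjugacy between $f_0\restriction_{\hat{\Bbb{R}}}$ and $f_t\restriction_{\hat{\Bbb{R}}}$ and hence preserving entropy.

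The principal obstacle is establishing the correspondence between components of $\mathcal{M}'_d \cap {\rm M}(f) - \mathcal{S}'$ and of the fixed locus of $\iota$: a priori, two real maps can be quasi-conformally conjugate over $\Bbb{C}$ without admitting a real-symmetric qc conjugacy, so the real slice might refine the complex quasi-conformal class into finer pieces on some of which $h_{\Bbb{R}}$ could jump. Ruling this out requires a careful analysis of how $\iota$ interacts with the (finite-dimensional) group of qc self-conjugacies of $f$, and pinpointing $\mathcal{S}'$ as precisely the locus where this parameterization degenerates---for instance, where critical orbits encounter parabolic behavior or coalesce on $\hat{\Bbb{R}}$. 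Reconciling this with the special behavior of flexible Lattès and Chebyshev-like maps is, I expect, the technical crux of the proof.
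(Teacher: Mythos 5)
Your reading of $\mathcal{S}'$ is off, and it matters twice. In the paper $\mathcal{S}'$ is the \emph{symmetry locus}---classes of real maps with a non-trivial Möbius automorphism---and it is removed so that $h_\Bbb{R}$ is single-valued: a class on $\mathcal{S}'$ may contain several $\mathrm{PGL}_2(\Bbb{R})$-orbits with different circle dynamics, as for the twists $\frac{1}{\mu}\left(z\pm\frac{1}{z}\right)$ of Example \ref{symmetry locus entropy}. Off $\mathcal{S}'$, Proposition \ref{symmetry locus} (not Proposition \ref{antipodal}, which concerns the antipodal component of $\mathcal{M}_d(\Bbb{R})$) shows the conjugacy between two real representatives is automatically real. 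Your characterization of $\mathcal{S}'$ as a ``degeneracy locus capturing maps whose piecewise-monotonic combinatorics on $\hat{\Bbb{R}}$ is unstable,'' and later as the locus ``where critical orbits encounter parabolic behavior or coalesce on $\hat{\Bbb{R}}$,'' describes a different and unrelated set. Relatedly, continuity is not obtained by excising $\mathcal{S}'$; the lap number of a single iterate is not continuous in the coefficients. Continuity is the $C^1$-continuity of entropy for interval and circle maps of uniformly bounded modality (the Misiurewicz result the paper cites), applied after writing real degree-$d$ maps as Blaschke products, and it holds on all of $\mathcal{M}'_d-\mathcal{S}'$.

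For the rigidity statement you have correctly located the obstacle but have not resolved it, and the route you propose does not close it. You want a path of \emph{real-symmetric} Beltrami differentials joining $f_0$ to $f_1$ inside a component of $\mathcal{M}'_d\cap{\rm{M}}(f)-\mathcal{S}'$, and you yourself note the problem: two real maps may be qc conjugate over $\Bbb{C}$ without admitting a qc conjugacy $\varphi$ with $\varphi\circ c=c\circ\varphi$. The paper never establishes the existence of such a real-symmetric conjugacy; it proves equality of entropies even when one may not exist. Concretely, for $g_i=h_i\circ f\circ h_i^{-1}\in\Bbb{R}(z)$ one forms the Schwarz-reflection discrepancies $u_i(z)=\overline{h_i^{-1}\left(\overline{h_i(z)}\right)}$, quasi-conformal automorphisms of $f$; their classes in the \emph{discrete} modular group ${\rm{Mod}}(f)$ vary continuously with $[(g_i,h_i)]\in{\rm{T}}(f)$ and are therefore locally constant. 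When $[u_1]=[u_2]$, the automorphism $v=u_1^{-1}\circ u_2$ of $f$ is isotopic to the identity; sliding any periodic point $z_0$ along the isotopy $\{v_t\}$ gives a curve of periodic points of $f$, which by countability is constant, so $v$ fixes every periodic point and hence all of $\mathcal{J}(f)$. This forces $\mathcal{J}(f)\cap h_1^{-1}(\hat{\Bbb{R}})\subseteq h_2^{-1}(\hat{\Bbb{R}})$ and symmetrically, and since the entropy of $f$ restricted to an invariant quasi-circle is carried by its intersection with $\mathcal{J}(f)$, the two real entropies coincide. This modular-group-plus-periodic-point-rigidity step is precisely the missing ingredient; without it your argument terminates at the gap you flag, and ``a careful analysis of how $\iota$ interacts with the group of qc self-conjugacies'' is not a remark to be filled in later---it \emph{is} the proof.
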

\noindent
Here,  
${\rm{M}}(f)$ is the subspace of Möbius conjugacy classes of rational maps which are quasi-conformally conjugate to $f$ globally. The subspace $\mathcal{S}'$ is the 
intersection with 
$\mathcal{M}'_d$ 
of the symmetry locus $\mathcal{S}(\Bbb{C})$, which is the subset of  conjugacy classes of degree $d$ maps that admit  non-trivial Möbius automorphisms. In Proposition \ref{codimension>1} we prove that  
$\mathcal{S}(\Bbb{C})$  
is a closed subvariety of $\mathcal{M}_d(\Bbb{C})$ of
dimension $d-1$; therefore, the domain $\mathcal{M}'_d-\mathcal{S}'$ of $h_\Bbb{R}$ is a real variety of dimension $2d-2$. It is irreducible as a real variety but consists of $d+1$ connected components in its analytic topology; see Proposition \ref{degree components}. The continuity of $h_\Bbb{R}$ follows from \cite{MR1372979}.\\
\indent
The subvariety  $\mathcal{S}'=\mathcal{M}'_d\bigcap\mathcal{S}(\Bbb{C})$ is excluded so that $h_\Bbb{R}$ is well defined:
the dynamics on  $\hat{\Bbb{R}}$ should be independent of the real representative picked from a Möbius conjugacy class so one has to omit real maps admitting \textit{twists}; that is, real rational maps that are Möbius conjugate only over $\Bbb{C}$, and it is well known that twists are always associated with non-trivial automorphisms \cite[\S 4.8]{MR2316407}.

The \textit{dynamical moduli space} ${\rm{M}}(f)$ associated with a rational map $f$ has been introduced and thoroughly studied in \cite{MR1620850}. Any structurally-stable holomorphic family $\left\{f_{\lambda}\right\}_{\lambda\in\Lambda}$ of degree $d$ rational maps including $f$ comes with a canonical map  
$\Lambda\rightarrow{\rm{M}}(f)$
that to each parameter $\lambda$ assigns the conformal conjugacy class 
$\langle f_{\lambda}\rangle\in {\rm{M}}(f)\subset\mathcal{M}_d(\Bbb{C})$.
This is due to the fact that the notions of quasi-conformal and topological stability coincide \cite[Theorem 7.1]{MR1620850}. Hence the moduli spaces ${\rm{M}}(f)$ provide a natural framework for Theorem \ref{temp1} which is concerned with the constancy of the real entropy function $h_\Bbb{R}$  in a structurally-stable family of real rational maps. Nevertheless, as it is the Julia set that captures the relevant part of the dynamics of a rational map, it is not surprising that an analogous result can be formulated for families of real maps that are $\mathcal{J}$-\textit{stable} in the sense of \cite{MR732343}: 
\begin{proposition}\label{temp1'}
The real entropy is constant along any analytic $1$-parameter $\mathcal{J}$-stable family $\left\{f_t\right\}_{t\in [0,1]}$ of real rational maps of degree $d$.
\end{proposition}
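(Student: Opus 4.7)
The plan is to deduce Proposition~\ref{temp1'} from the Mañé-Sad-Sullivan $\lambda$-lemma by upgrading the holomorphic motion of the Julia sets to one that is equivariant under complex conjugation, and then reducing the real entropy to the entropy on the real part of the Julia set. Since $t\mapsto f_t$ is real analytic, it extends to a complex-analytic family $\{f_\lambda\}_{\lambda\in U}$ over a neighborhood $U\subset\Bbb{C}$ of $[0,1]$ stable under $\lambda\mapsto\bar\lambda$, satisfying $\overline{f_\lambda(z)}=f_{\bar\lambda}(\bar z)$. Openness of $\mathcal{J}$-stability allows us to shrink $U$ so that the extended family remains $\mathcal{J}$-stable.

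The theorem of \cite{MR732343} then yields a holomorphic motion $\phi_\lambda:\mathcal{J}(f_0)\to\mathcal{J}(f_\lambda)$ with $\phi_0=\mathrm{id}$ and $\phi_\lambda\circ f_0=f_\lambda\circ\phi_\lambda$. The key observation is that $\psi_\lambda(z):=\overline{\phi_{\bar\lambda}(\bar z)}$ is another holomorphic motion conjugating $f_0$ to $f_\lambda$: holomorphicity in $\lambda$ is preserved under the reflection $\lambda\mapsto\bar\lambda$ followed by conjugation, and the reality of the family supplies the conjugacy relation. On the dense subset of repelling periodic points both $\phi_\lambda$ and $\psi_\lambda$ must coincide with the map sending a periodic point $z_0\in\mathcal{J}(f_0)$ to its unique analytic continuation as a periodic point of $f_\lambda$, so the $\lambda$-lemma forces $\psi_\lambda\equiv\phi_\lambda$ on all of $\mathcal{J}(f_0)$. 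Specializing to real $\lambda=t\in[0,1]$, the identity $\phi_t(z)=\overline{\phi_t(\bar z)}$ shows that $\phi_t$ restricts to a homeomorphism $\mathcal{J}(f_0)\cap\hat{\Bbb{R}}\to\mathcal{J}(f_t)\cap\hat{\Bbb{R}}$ conjugating the restricted dynamics, whence $h_{\mathrm{top}}\!\left(f_0\restriction_{\mathcal{J}(f_0)\cap\hat{\Bbb{R}}}\right)=h_{\mathrm{top}}\!\left(f_t\restriction_{\mathcal{J}(f_t)\cap\hat{\Bbb{R}}}\right)$.

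It then suffices to check that for every $f\in\Bbb{R}(z)$ one has $h_\Bbb{R}(f)=h_{\mathrm{top}}\!\left(f\restriction_{\mathcal{J}(f)\cap\hat{\Bbb{R}}}\right)$. The inequality ``$\geq$'' is immediate, and the reverse follows from the variational principle together with the classical fact that the dynamics on the real trace of a Fatou component---an attracting or parabolic basin, a Siegel disk, or a Herman ring, by Sullivan's classification---admits no invariant measure of positive metric entropy. Combining the two equalities yields $h_\Bbb{R}(f_t)=h_\Bbb{R}(f_0)$ for all $t\in[0,1]$. The main obstacle is the reality step $\psi_\lambda\equiv\phi_\lambda$ above: monodromy difficulties are absent since $[0,1]$ is simply connected, but the real-analytic hypothesis on the family is exactly what makes it meaningful to complexify the parameter and to impose the equivariance with $z\mapsto\bar z$ that underlies the whole argument.
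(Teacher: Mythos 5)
Your proof is correct and follows essentially the same route as the paper's: both arguments show that the holomorphic motion of Julia sets must commute with complex conjugation by establishing this first on periodic points (where the continuation is rigid) and then invoking density, after which the conjugacy restricts to the real Julia sets. The only cosmetic differences are that you make the complexification of the parameter and the $\lambda$-lemma explicit (defining the reflected motion $\psi_\lambda$ and showing $\psi_\lambda\equiv\phi_\lambda$), whereas the paper works directly on $[0,1]$ and shows that for each periodic $x_0$ the curve $t\mapsto\overline{i_t^{-1}\left(\overline{i_t(x_0)}\right)}$ lands in a finite set and is therefore constant; likewise you reduce $h_\Bbb{R}$ to the entropy on $\mathcal{J}(f)\cap\hat{\Bbb{R}}$ via the variational principle, while the paper does so through a non-wandering-set argument in a footnote.
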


Hyperbolic components in $\mathcal{M}_d(\Bbb{C})$ are examples of $\mathcal{J}$-stable components of the moduli space. As for the entropy values over \textit{real hyperbolic components}, the claim of Proposition \ref{temp1'} will be directly proved via invoking the  \textit{kneading theory} of Milnor and Thurston \cite{MR970571} in \S\ref{hyperbolic subsection}. A non-hyperbolic but algebraic example of a structurally stable family contained in a single isentrope is the family of flexible Latt\`es maps for which the real entropy can be calculated explicitly; see \S\ref{Lattes subsection}.  There are also interesting families of quasi-conformally constant real rational maps  where $h_\Bbb{R}(f)$ attains its maximum $\log\left(\deg f\right)$. Characterizing such rational maps turns out to be an interesting problem in its own right: 
\begin{theorem}\label{temp3}
Let $f$ be a real rational map of degree $d\geq 2$ with  $h_\Bbb{R}(f)=\log(d)$. Then the Julia set $\mathcal{J}(f)$
is completely contained in $\hat{\Bbb{R}}$ and either coincides with it or is a subinterval or a Cantor subset of it. \\
\indent
Moreover, in terms of Blaschke products 
\begin{equation}\label{Blaschke temp1}
{\rm{e}}^{2\pi{\rm{i}}c}\prod_{i=1}^{d}\left(\frac{z-a_i}{1-\bar{a_i}z}\right)\quad 
\left(|a_1|,\dots,|a_d|<1;\,  \, c\in\Bbb{R}/\Bbb{Z}\right)
\end{equation}
that preserve the unit circle instead of the real circle $\hat{\Bbb{R}}$:
\begin{enumerate}[(i)]
\item The map in \eqref{Blaschke temp1} is hyperbolic with its Julia set a Cantor subset of the unit circle if and only if there is a
$\theta_0\in\Bbb{R}/\Bbb{Z}$ satisfying 
\begin{equation}\label{temp equation 1}
\sum_{i=1}^d\frac{1-|a_i|^2}{|{\rm{e}}^{2\pi{\rm{i}}\theta_0}-a_i|^2}<1
\end{equation}
for which
\begin{equation}\label{temp equation 2}
c\equiv (d+1).\theta_0-\frac{1}{\pi}\sum_{i=1}^d\arg({\rm{e}}^{2\pi{\rm{i}}\theta_0}-a_i)\pmod{\Bbb{Z}}.
\end{equation}
\item  Up to conjugation with the \textit{Cayley transform} $\left(z\mapsto\frac{z-{\rm{i}}}{z+{\rm{i}}}\right)$ and a biholomorphism of the unit disk, hyperbolic degree $d$ maps $f$ with $\mathcal{J}(f)=\hat{\Bbb{R}}$  are the same as  Blaschke products
\begin{equation}\label{Blaschke temp2}
z.\prod_{i=1}^{d-1}\left(\frac{z-a_i}{1-\bar{a_i}z}\right)\quad 
\left(|a_1|,\dots,|a_{d-1}|<1\right)
\end{equation}
or their post-composition with $z\mapsto\frac{1}{z}$.  
\item Every non-hyperbolic degree $d$ map with $\mathcal{J}(f)=\hat{\Bbb{R}}$ after conjugation with a suitable  real Möbius map would be a map of the form 
$\epsilon\left(z+\frac{P(z)}{Q(z)}\right)$ with non-real critical points
where $\epsilon\in\{\pm1\}$ and $P$, $Q$ are coprime real polynomials satisfying $\deg P\leq d-1$ and $\deg Q=d-1$ with the  inequality strict when $\epsilon=+1$. 
\item Every degree $d$ map $f$ with the Julia set a subinterval of $\hat{\Bbb{R}}$ is semi-conjugate to a map from one of the classes appeared in the last two parts: up to a real Möbius conjugacy, $f$ is either the quotient of a map such as
\begin{equation}\label{Blaschke temp3}
\pm z.\prod_{i=1}^{d-1}\left(\frac{z-a_i}{1-\bar{a_i}z}\right)\quad 
\left(\begin{matrix}
|a_1|,\dots,|a_{d-1}|<1\\
\left\{a_1,\dots,a_{d-1}\right\}=\left\{\overline{a_1},\dots,\overline{a_{d-1}}\right\}
\end{matrix}
\right)
\end{equation}
by the action of $z\mapsto\frac{1}{z}$ that commutes with it;  
or is the quotient of a map of the form $\epsilon\left(z+\frac{P(z)}{Q(z)}\right)$ appeared in (iii) by the action of $z\mapsto -z$ provided that it commutes with $\epsilon\left(z+\frac{P(z)}{Q(z)}\right)$; namely, either $P(z)$ is odd and $Q(z)$ is even or vice versa.
\end{enumerate}
\end{theorem}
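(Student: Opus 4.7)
The plan is to first reduce to the case $\mathcal{J}(f)\subseteq\hat{\Bbb{R}}$, classify the three possible topological types of the Julia set inside $\hat{\Bbb{R}}$, and then treat each type by an explicit normal-form argument via the Cayley transform.

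The starting observation is that $h_\Bbb{R}(f)=\log d$ forces $\mathcal{J}(f)\subseteq\hat{\Bbb{R}}$. Indeed, since $f\restriction_{\hat{\Bbb{R}}}$ is piecewise real-analytic on a circle and the variational principle applies with entropy upper semi-continuous on the invariant-measure simplex, there exists an $f\restriction_{\hat{\Bbb{R}}}$-invariant probability measure $\nu$ on $\hat{\Bbb{R}}$ with $h_\nu(f\restriction_{\hat{\Bbb{R}}})=\log d$. Pushed forward to $\hat{\Bbb{C}}$, it remains an $f$-invariant measure of metric entropy $\log d=h_{\mathrm{top}}(f)$; by the Freire--Lopes--Ma\~n\'e--Lyubich characterization of the measure of maximal entropy, $\nu$ must coincide with $\mu_f$, so $\mathcal{J}(f)=\mathrm{supp}(\mu_f)\subseteq\hat{\Bbb{R}}$. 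Being perfect and closed in $\hat{\Bbb{R}}$, the Julia set is either the full circle, totally disconnected (hence a Cantor set), or contains a non-degenerate arc; in the last case, topological transitivity of $f$ on $\mathcal{J}(f)$ combined with invariance under complex conjugation and Sullivan's no-wandering-domains theorem applied to the complementary gaps in $\hat{\Bbb{R}}$ collapses the possibilities to a single closed arc.

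For parts (i) and (ii), assume $\mathcal{J}(f)=\hat{\Bbb{R}}$, so the Fatou components are exactly $\Bbb{H}^+$ and $\Bbb{H}^-$; each is completely invariant, possibly after a period-$2$ swap that is absorbed by post-composing with $z\mapsto 1/z$. Conjugating by the Cayley transform $C(z)=(z-i)/(z+i)$ sends $(\hat{\Bbb{R}},\Bbb{H}^+)$ to $(S^1,\Bbb{D})$ and turns $f$ into a degree-$d$ holomorphic self-map of $\Bbb{D}$, hence a finite Blaschke product of the form \eqref{Blaschke temp1}. Since $\Bbb{D}$ is now the only Fatou component in the disk, an attracting cycle (present by hyperbolicity) has period $1$; a disk automorphism moving it to $0$ and rotating away the constant phase (possible whenever $d\geq 2$) yields \eqref{Blaschke temp2}. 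For (i), logarithmic differentiation gives
\[
|B'(e^{2\pi i\theta})|=\sum_{i=1}^{d}\frac{1-|a_i|^2}{|e^{2\pi i\theta}-a_i|^2},
\]
while the identity $1-\bar a_i e^{2\pi i\theta_0}=e^{2\pi i\theta_0}\,\overline{e^{2\pi i\theta_0}-a_i}$ reduces the fixed-point equation $B(e^{2\pi i\theta_0})=e^{2\pi i\theta_0}$ to \eqref{temp equation 2}; a hyperbolic disk-preserving Blaschke product has Cantor Julia set on $S^1$ exactly when its circle dynamics admits an attracting fixed point, i.e.\ precisely when some $\theta_0$ satisfies both \eqref{temp equation 1} and \eqref{temp equation 2}.

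For (iii), non-hyperbolicity with $\mathcal{J}(f)=\hat{\Bbb{R}}$ makes $\Bbb{H}^\pm$ parabolic basins, so $f$ carries a parabolic cycle on $\hat{\Bbb{R}}$. A real M\"obius conjugation sending one parabolic fixed point to $\infty$, combined with reading off the sign $\epsilon\in\{\pm1\}$ from whether $f$ preserves or swaps the half-planes, leads via a Laurent expansion at $\infty$ to the stated form $\epsilon(z+P(z)/Q(z))$; the degree bounds $\deg P\leq d-1$, $\deg Q=d-1$ encode the degree of $f$, and the strict inequality $\deg P<d-1$ for $\epsilon=+1$ is exactly what forces $\infty$ to be a parabolic fixed point of multiplicity $\geq 2$ rather than a repelling one, while all free critical points being attracted to the parabolic cycle through $\Bbb{H}^\pm$ keeps them off $\hat{\Bbb{R}}$. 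For (iv), suppose $\mathcal{J}(f)$ is a proper closed subinterval $[\alpha,\beta]\subset\hat{\Bbb{R}}$; after a real M\"obius normalization sending $\{\alpha,\beta\}$ to $\{-1,1\}$ or $\{0,\infty\}$, introduce the natural real involution $\sigma\in\{z\mapsto 1/z,\,z\mapsto-z\}$ fixing the endpoints and swapping the two components of $\hat{\Bbb{R}}\setminus\{\alpha,\beta\}$. A combinatorial-rigidity argument via kneading invariants \cite{MR970571} shows $f$ commutes with $\sigma$; passing to the quotient $\hat{\Bbb{C}}/\langle\sigma\rangle\cong\hat{\Bbb{C}}$ produces a degree-$d$ real rational map whose Julia set is the full quotient circle, hence of type (ii) or (iii), and pulling back through the $\sigma$-cover yields \eqref{Blaschke temp3} or the $\sigma$-symmetrized version of the parabolic form. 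The hardest parts will be ruling out arc-plus-Cantor mixtures in the trichotomy for $\mathcal{J}(f)$, and producing the commuting involution $\sigma$ for (iv); both should follow from the rigid combination of complete $f$-invariance, complex-conjugation symmetry, and topological transitivity on $\mathcal{J}(f)$.
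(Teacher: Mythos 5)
Your reduction $h_\Bbb{R}(f)=\log d\Rightarrow\mathcal{J}(f)\subseteq\hat{\Bbb{R}}$ via a maximal-entropy measure for the circle map and the uniqueness of $\mu_f$ is the same idea as the paper's Theorem \ref{extremal classification}, and your handling of (i)--(iii) — Cayley transform to a Blaschke product, logarithmic differentiation for the circle derivative, moving a parabolic fixed point to $\infty$ for the normal form $\epsilon(z+P/Q)$ — follows the paper's proof closely. Your trichotomy argument (no arc-plus-Cantor mixture) via ``Sullivan applied to the complementary gaps'' is much vaguer than the paper's Montel-based contradiction in Theorem \ref{extremal classification 1}, but the conclusion is right and the approach could be made to work.

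Part (iv), however, is wrong in direction. You introduce a M\"obius involution $\sigma$ fixing the endpoints of $\mathcal{J}(f)$ and swapping the two complementary arcs of $\hat{\Bbb{R}}$, and claim $f$ commutes with $\sigma$. This cannot hold: such a $\sigma$ carries the closed interval $\mathcal{J}(f)$ onto the closure of its complementary arc, so $\sigma(\mathcal{J}(f))\neq\mathcal{J}(f)$, contradicting the complete invariance of $\mathcal{J}(f)$ under any symmetry of $f$. (And a M\"obius involution that instead preserved the interval while fixing both endpoints would, by Brouwer, fix an interior point too — three fixed points force $\sigma=\mathrm{id}$.) The statement of (iv) has the arrow the other way: $f$ (interval Julia set) is the \emph{quotient} of a map $g$ (circle Julia set) under a degree-two branched cover, and it is $g$, not $f$, that commutes with $z\mapsto 1/z$ or $z\mapsto -z$. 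The genuine work — supplied by the paper's Proposition \ref{Joukowsky} — is to construct the lift $g$ and show it is a rational map: one builds $g$ piecewise on the circle from the two branches $\mathfrak{j}_\pm$ of the Joukowsky map, extends $g$ off the circle by solving $\mathfrak{j}\circ g=f\circ\mathfrak{j}$ against the biholomorphisms from the inside/outside of the unit disk onto $\hat{\Bbb{C}}-[-2,2]$, and then removes the finitely many isolated singularities. Your appeal to Milnor--Thurston kneading theory does not touch this: the difficulty is not a combinatorial rigidity of $f$ on the interval, but the holomorphic extension of a lift that is a priori defined only on the unit circle.
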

Part (i) furnishes us with a $\mathcal{J}$-stable family of rational maps whose Julia sets are Cantor subsets of the unit (or equivalently real) circle while the other parts classify all rational maps of maximal real entropy whose Julia sets are one of the other alternatives, namely a real subinterval or the whole real circle. Rational maps with circle or interval Julia sets have also been studied in \cite{MR2833563} and \cite[\S 25]{MR1504787}.

\indent \textbf{Motivation.}
An important question regarding the real entropy function on the moduli space of real rational maps is the nature of its level sets,  called isentropes. We view this article as a first step towards the study of the level sets of the function 
$h_\Bbb{R}$ defined on the moduli of real rational maps. There is an extensive literature on the monotonicity of entropy for various families of polynomials where the central problem is the connectedness of isentropes. The monotonicity was first established by Milnor and Thurston for the quadratic polynomial family  in \cite{MR970571} (also see \cite{MR762431,MR1351519}), and for bimodal cubic polynomials by Milnor and Tresser in \cite{MR1736945}. The general case of degree $d$ polynomials with $d$ real non-degenerate critical points was  settled  in \cite{MR3264762} by van Strien and Bruin. The setting of rational maps is different from the polynomial setting because we are dealing with circle maps 
 $f\restriction_{\hat{\Bbb{R}}}:\hat{\Bbb{R}}\rightarrow\hat{\Bbb{R}}$ instead of interval maps: In all aforementioned references one essentially deals with a boundary-anchored  map of an interval outside which the orbits either escape to infinity or converge to a cycle of period at most two
\cite[Theorem 3.2]{MR1736945}.\footnote{It has to be mentioned  that in general the smallest subinterval that carries all the ``entropy data'', namely the smallest compact interval that includes the real points of the filled Julia set, is not necessarily invariant; see the discussion in  \cite[\S 2]{MR1181083}.}
Of course, if the restriction is not surjective, then
$f\restriction_{\hat{\Bbb{R}}}:\hat{\Bbb{R}}\rightarrow\hat{\Bbb{R}}$
can be replaced with an (not necessarily boundary-anchored) interval map of the same entropy; as
\begin{equation}\label{real entropy}
h_{\Bbb{R}}(f)=h_{\rm{top}}\left(f\restriction_{\hat{\Bbb{R}}}:\hat{\Bbb{R}}\rightarrow\hat{\Bbb{R}}\right)
=h_{\rm{top}}\left(f\restriction_{f(\hat{\Bbb{R}})}:f(\hat{\Bbb{R}})\rightarrow f(\hat{\Bbb{R}})\right).
\end{equation}
For example, when the rational map $f$ is quadratic; $f\restriction_{\hat{\Bbb{R}}}:\hat{\Bbb{R}}\rightarrow\hat{\Bbb{R}}$ is either a covering map (and thus of entropy $\log(2)$ according to 
\cite[Theorem 1$'$]{MR579440}) or is not surjective in which case one can work with the interval map $f\restriction_{f(\hat{\Bbb{R}})}:f(\hat{\Bbb{R}})\rightarrow f(\hat{\Bbb{R}})$ instead; cf. \cite[\S 10]{MR1246482}.\footnote{For higher degrees there are rational maps that take the circle $\hat{\Bbb{R}}$ onto itself via a map which is not covering; e.g., 
an odd degree polynomial map with at least one non-degenerate real critical point or a map such as
$x\mapsto\frac{P(x)}{(x^2-1)^2}$ where $P(x)$ is a real polynomial of degree $d\geq 4$ with $P(1).P(-1)<0$.}
It turns out that in general the domain $\mathcal{M}'_d-\mathcal{S}'$ of the real entropy function is disconnected 
(Proposition \ref{degree components}) and thus the monotonicity has to be studied among maps whose restrictions to $\hat{\Bbb{R}}$ are of a common  topological degree. 
There are even  smaller  natural analytic domains in $\mathcal{M}'_d$ that are dynamically defined according to the degree along with the modality of  
$f\restriction_{f(\hat{\Bbb{R}})}$; and the monotonicity of the restriction of $h_\Bbb{R}$ to such regions is still worthy to investigate.\footnote{Modality can be considered to be finer than degree; for instance a polynomial self-map of the real axis is of topological degree $0$ or $\pm 1$ according to the parity of the degree of the polynomial while it can have various numbers of laps. In the literature, the monotonicity problem has been  studied mostly for a special class of degree $d$ real polynomial maps whose lap numbers are $d$ as well; see \cite{MR3264762}, \cite{MR1736945} and \cite{MR970571}.} As an example, for $d=2$ a natural partition of $\mathcal{M}'_2=\mathcal{M}_2(\Bbb{R})\cong\Bbb{R}^2$
to \textit{degree} $\pm 2$, \textit{monotonic},  \textit{unimodal} and \textit{bimodal} regions has been outlined in \cite[\S 10]{MR1246482}.
The entropy behavior over $\mathcal{M}'_2$ is the subject of the article \cite{2019arXiv190103458F} that focuses on the monotonicity problem.

\indent \textbf{Outline.}
Putting the real entropy function  $h_\Bbb{R}$ (as described in Theorem \ref{temp1}) on a firm footing is the main goal of second  section and is done in \S\S\ref{symmetry subsection},\ref{Blaschke subsection},\ref{properties subsection}. 
The last subsection of \S\ref{setup} discusses how
$\mathcal{M}'_d$
 is related to the real locus  $\mathcal{M}_d(\Bbb{R})$. 
\\ 
\indent
The main result, Theorem  \ref{temp1}, is formulated in the context of the theory of  Teichmüller and moduli spaces of rational maps developed by McMullen and Sullivan in \cite{MR1620850}. After a brief overview of this theory, we prove the theorem in \S\ref{proof section} and discuss its implications for the dimension of isentropes; see Corollary \ref{main-corollary}. \\
\indent
The fourth section is devoted to various examples of (structurally or $\mathcal{J}$-) stable families of rational maps with real coefficients that, according to our rigidity result, should come with the same real entropy. 
First, in \S\ref{hyperbolic subsection} we directly prove that $h_\Bbb{R}$ is locally constant on the real locus of a hyperbolic component in $\mathcal{M}_d(\Bbb{C})$; see Theorem \ref{entropy constant over hyperbolic}.  In \S\ref{Lattes subsection} we calculate the real entropy for families of flexible Latt\`es maps with real coefficients. Finally, in \S\ref{maximal subsection} we turn to the isentrope 
$h_\Bbb{R}=\log(d)$ where $h_\Bbb{R}:\mathcal{M}'_d-\mathcal{S}'\rightarrow \left[0,\log(d)\right]$ attains its maximum. A careful analysis in Theorems \ref{extremal classification} and \ref{extremal classification 1} of the dynamics induced on the real circle by a map $f\in{\rm{Rat}}_d(\Bbb{R})$ with $h_\Bbb{R}(f)=\log(d)$  culminates in families of degree $d$ maps of real entropy $\log(d)$ outlined in Theorem \ref{temp3}. Each of these families is structurally stable for a generic choice of parameters and (once projected to $\mathcal{M}'_d$) parametrizes the real locus of a single dynamical moduli space 
${\rm{M}}(f)$ associated with a degree $d$ real map $f$ with $h_\Bbb{R}(f)=\log(d)$. \\
\indent There are two appendices. The first one proves a structure theorem for the \textit{real Julia set} 
$\mathcal{J}_{\Bbb{R}}(f)=\mathcal{J}(f)\cap\hat{\Bbb{R}}$ of a real rational map $f$ which is defined as the intersection of the complex Julia set with the extended real line. The second appendix addresses the entropy values realized by rational maps  $\hat{\Bbb{R}}\rightarrow\hat{\Bbb{R}}$ of a fixed topological degree.

\textbf{Notation and Terminology.} The \textit{non-wandering set} of a topological dynamical system 
$f:X\rightarrow X$ is denoted by ${\rm{NW}}\left(f:X\rightarrow X\right)$. Assuming $X$ is compact, it is a standard fact that the subsystem obtained from restricting $f$ to the non-wandering set is of full entropy
\cite{MR1255515}. When $X$ is an interval, a \textit{lap} of $f$ is defined to be a maximal monotonic subinterval and the number of laps (i.e. modality$+1$) is denoted by $l(f)$ and is called the \textit{lap number}. 
It is a standard fact that the entropy of a multimodal (i.e. piecewise monotone or of finite modality) interval map is the exponential growth rate 
of the number of laps of its iterates \cite[Theorem 1]{MR579440}:
\begin{equation}\label{lap number}
h_{\rm{top}}(f)=\lim_{n\to\infty}\frac{1}{n}\log\left(l(f^{\circ n})\right)=\inf_{n}\,\frac{1}{n}\log\left(l(f^{\circ n})\right).
\end{equation}
The result remains valid for a multimodal self-map of the circle $S^1$ as long as  ``laps''
are interpreted as those of the (possibly discontinuous) transformation of $[0,1)$ obtained from conjugating the  transformation of $S^1$ with the bijection $[0,1)\rightarrow S^1: x\mapsto {\rm{e}}^{2\pi{\rm{i}}x}$; see
\cite[Theorem 1$^{'}$]{MR579440} for details. In particular, the $n^{\rm{th}}$ iterate of a degree $d$ covering $S^1\rightarrow S^1$ lifts to a self-map of $[0,1)$ with $d^n$ continuous pieces, all of them monotonic. Its entropy is thus $\log(d)$.
\\
\indent
A metric space is called a \textit{Cantor space} if it is compact, perfect (i.e. without an isolated point) and totally disconnected. 
We refer the reader to the book \cite{MR1886084} for the terminology and background on point-set topology that will be used in this paper. 
\\
\indent
Spaces $\hat{\Bbb{C}}$ and $\hat{\Bbb{R}}$ denote the compactifications $\Bbb{C}\cup\{\infty\}$
and $\Bbb{R}\cup\{\infty\}$ of $\Bbb{C}$ and $\Bbb{R}$ respectively and we use $z$ and $x$ for the coordinates on them. When the Riemann sphere is considered as a complex algebraic curve, the notation 
$\Bbb{P}^1(\Bbb{C})$ is used instead.  The degree of a rational map is denoted by $d$ and is always assumed to be greater than one. The Julia set of a rational map $f$ is shown by $\mathcal{J}(f)$.\\
\indent 
For notations related to the moduli space of rational maps, we mainly follow \cite{MR1246482} and \cite{MR2316407}: the 
\textit{moduli space  of  rational maps of degree $d$} is an affine  variety $\mathcal{M}_d$ with a model over the rational numbers constructed by Silverman in  \cite{MR1635900} as the geometric quotient ${\rm{Rat}}_d\big/{\rm{PSL}}_2$ where ${\rm{Rat}}_d\subset\Bbb{P}^{2d+1}$ is the parameter space of degree $d$ rational maps.\footnote{For technical reasons, Silverman works with ${\rm{PSL}}_2$ instead of ${\rm{PGL}}_2$. Of course, it does not make a difference over the algebraically closed field $\Bbb{C}$ as 
${\rm{PSL}}_2(\Bbb{C})={\rm{PGL}}_2(\Bbb{C})$.}  Given a variety $V$ over a field $K$, the set of its $K'$-points  is written as $V(K')$ for any  larger field $K'\supseteq K$.  So here it makes sense to write $\mathcal{M}_d(K)$ for any subfield $K$ of $\Bbb{C}$, and the complex variety $\mathcal{M}_d(\Bbb{C})$ coincides with the orbifold 
${\rm{Rat}}_d(\Bbb{C})\big/{\rm{PGL}}_2(\Bbb{C})$ consisting of Möbius conjugacy classes of degree $d$ rational maps on the Riemann sphere. The conjugacy class of a rational map $f$ will be denoted by $\langle f\rangle$. The subvariety $\mathcal{S}(\Bbb{C})$ of $\mathcal{M}_d(\Bbb{C})$, called the \textit{symmetry locus}, is the subvariety determined by rational maps $f$ for which the group ${\rm{Aut}}(f)$ of Möbius transformations commuting with $f$ is non-trivial.
The rational map $\bar{f}$ is the map obtained by applying the complex conjugation to coefficients of $f\in\Bbb{C}(z)$. For the broader class of complex-valued functions on $\hat{\Bbb{C}}$, we use the notation $\tilde{f}$ instead which is the result of conjugating $z\mapsto f(z)$ with $z\mapsto\bar{z}$; cf. \eqref{involution}. As for special transformations, the \textit{antipodal involution} and the \textit{Joukowsky transform} are given by  $\gamma:=z\mapsto\frac{-1}{\bar{z}}$ and 
$\mathfrak{j}:=z\mapsto z+\frac{1}{z}$ respectively. Finally, in speaking of quasi-conformal (q.c. for short) homeomorphisms of the Riemann sphere, the dilatation is always denoted by $\mu$.

\section{The real entropy function}\label{setup}
The real entropy  \eqref{real entropy 1} can be considered as a well defined continuous function on a certain open submanifold of $\mathcal{M}'_d\subset\mathcal{M}_d(\Bbb{C})$ obtained from omitting those ${\rm{PGL}}_2(\Bbb{C})$-conjugacy classes of real maps that include more than one ${\rm{PGL}}_2(\Bbb{R})$-conjugacy class. Studying this domain is the subject of the first two subsections \S\S\ref{symmetry subsection},\ref{Blaschke subsection}. Properties of the real entropy function stated in Theorem \ref{temp1} will be proved in  Proposition \ref{dimension} of \S\ref{properties subsection}. 
In \S\ref{comparison subsection} we address the natural question of  how the real locus $\mathcal{M}_d(\Bbb{R})$ of  $\mathcal{M}_d(\Bbb{C})$ is related to the locus $\mathcal{M}'_d$ determined by rational maps that actually admit a model over $\Bbb{R}$; see Proposition \ref{antipodal}.

\subsection{Symmetries}\label{symmetry subsection}
The real entropy is assigned  to rational maps with real coefficients, so it is natural to form the subspace 
\begin{equation}\label{M'}
\mathcal{M}'_d:={\rm{PGL}}_2(\Bbb{C}).\left({{\rm{Rat}}_d}(\Bbb{R})\right)\big/{\rm{PGL}}_2(\Bbb{C})
=\left\{\langle f\rangle\in\mathcal{M}_d(\Bbb{C}) \,|\, f\in\Bbb{R}(z)\right\}
\end{equation}
consisting of conjugacy classes in $\mathcal{M}_d(\Bbb{C})$ that contain a real map. The real entropy $h_\Bbb{R}$ is not well defined as a function on the entirety of this real subvariety because the ambiguity in picking a real representative can lead to different entropy values.
\begin{example}\label{symmetry locus entropy}
Given $\mu\in\Bbb{R}-\{0\}$, real quadratic rational  maps $\frac{1}{\mu}\left(z\pm\frac{1}{z}\right)$   are conjugate via 
$z\mapsto{\rm{i}}z$ but exhibit quite different dynamical behavior on the real circle. The critical points of 
$\frac{1}{\mu}\left(z-\frac{1}{z}\right)$ are not real, so  
it induces a degree two covering 
$x\mapsto\frac{1}{\mu}\left(x-\frac{1}{x}\right)$ of $\hat{\Bbb{R}}$
whose entropy is therefore $\log(2)$.
On the other hand, the topological  entropy of 
$x\mapsto\frac{1}{\mu}\left(x+\frac{1}{x}\right)$ vanishes: 
for $|\mu|\leq 1$ every orbit is attracted to the fixed point $\infty$ of multiplier $\mu$; for $\mu>1$ orbits in the invariant interval $(0,\infty)$ tend to the attracting fixed point $\frac{1}{\sqrt{\mu-1}}$ while those in the invariant interval 
$(-\infty,0)$ tend to the attracting fixed point $-\frac{1}{\sqrt{\mu-1}}$; and finally, for $\mu<-1$ there is no finite real fixed point and any point of $\hat{\Bbb{R}}$, other than the fixed point $\infty$ and its preimage $0$,  converges under iteration to the $2$-cycle consisting of $\pm\frac{1}{\sqrt{-\mu-1}}$ whose multiplier 
is $\left(\frac{2+\mu}{\mu}\right)^2<1$. 
\end{example}
The problem with the pair $\frac{1}{\mu}\left(z\pm\frac{1}{z}\right)$ from the preceding example is that they are conjugate over the complex numbers but not over the reals. We are interested in the dynamics of $f\restriction_{\hat{\Bbb{R}}}$ and that is invariant only under real conjugacies, i.e. elements of ${\rm{PGL}}_2(\Bbb{R})$. In the literature of arithmetic dynamics, such examples of rational maps over a field $K$ which are conjugate only over a strictly larger field $K'$ are called \textit{twists} and they happen only if maps admit symmetries in ${\rm{PGL}}_2(\overline{K})$; see \cite[\S\S 4.7,4.8,4.9]{MR2316407} for details. In our context, only twists  over $\Bbb{R}$ are relevant.

\begin{proposition}\label{symmetry locus}
Fix  $f\in\Bbb{R}(z)$ with ${\rm{Aut}}(f)=\{\mathbf{1}\}$.
If $g\in\Bbb{R}(z)$ is ${\rm{PGL}}_2(\Bbb{C})$-conjugate to $f$, then $f,g$ are in fact ${\rm{PGL}}_2(\Bbb{R})$-conjugate.
\end{proposition}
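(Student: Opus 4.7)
Pick a Möbius transformation $M\in{\rm PGL}_2(\Bbb{C})$ witnessing the complex conjugacy, so that $g=M\circ f\circ M^{-1}$. The plan is to leverage the reality of both $f$ and $g$ by applying the involution $h\mapsto\bar h$ (coefficient-wise complex conjugation) to this equation, and then to use the triviality of ${\rm Aut}(f)$ to force $M$ itself to be definable over $\Bbb{R}$.

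First, since $h\mapsto\bar h$ is a ring involution of $\Bbb{C}(z)$ that respects composition and fixes each of $f,g\in\Bbb{R}(z)$, applying it to $g=MfM^{-1}$ yields $g=\bar{M}f\bar{M}^{-1}$. Comparing with the original identity gives $MfM^{-1}=\bar{M}f\bar{M}^{-1}$, i.e.\ $\bar{M}^{-1}M$ commutes with $f$. By the hypothesis ${\rm Aut}(f)=\{1\}$, we conclude $\bar{M}^{-1}M=1$ in ${\rm PGL}_2(\Bbb{C})$, or equivalently $[\bar{M}]=[M]$.

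The remaining step is to upgrade the projective equality $[\bar{M}]=[M]$ to the statement that $M$ has a real representative. Lift $M$ to some $\widetilde{M}\in{\rm GL}_2(\Bbb{C})$; then $\bar{\widetilde{M}}=\lambda\widetilde{M}$ for a scalar $\lambda\in\Bbb{C}^\times$. Applying complex conjugation once more gives $\widetilde{M}=\bar\lambda\bar{\widetilde{M}}=|\lambda|^2\widetilde{M}$, so $|\lambda|=1$; writing $\lambda={\rm e}^{{\rm i}\beta}$ and setting $\mu:={\rm e}^{{\rm i}\beta/2}$, the matrix $\mu\widetilde{M}$ satisfies $\overline{\mu\widetilde{M}}=\bar\mu\lambda\widetilde{M}=\mu\widetilde{M}$ and therefore has real entries. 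Hence $M\in{\rm PGL}_2(\Bbb{R})$, finishing the proof.

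I do not anticipate a genuine obstacle here; the only subtlety worth flagging is the passage from equality in ${\rm PGL}_2(\Bbb{C})$ to equality of actual matrices, which is precisely the rescaling computation in the final step. The argument is essentially a concrete instance of the triviality of the coboundary in $H^1({\rm Gal}(\Bbb{C}/\Bbb{R}),{\rm PGL}_2(\Bbb{C}))$ of the cocycle $\sigma\mapsto\bar{M}^{-1}M$, but there is no need to invoke Galois cohomology since the square root ${\rm e}^{{\rm i}\beta/2}$ of $\lambda$ gives the descent explicitly.
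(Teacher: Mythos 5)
Your proof is correct and follows the same core idea as the paper's: apply the coefficient-conjugation involution to the conjugating relation $g=M\circ f\circ M^{-1}$ and invoke ${\rm Aut}(f)=\{1\}$ to force $\bar{M}^{-1}M$ to be trivial in ${\rm PGL}_2(\Bbb{C})$. The paper phrases this as a contradiction and treats the final descent step (that $[\bar{M}]=[M]$ forces a real representative of $M$) as immediate, whereas you carry it out explicitly with the $|\lambda|=1$ computation and the square-root rescaling $\mu={\rm e}^{{\rm i}\beta/2}$ — a small but worthwhile completion of the argument.
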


\begin{proof}
{Assume the contrary;  the complex conjugacy class $\langle f\rangle$  contains at least two distinct real conjugacy classes. This implies the existence of a Möbius transformation 
$\alpha\in {\rm{PGL}}_2(\Bbb{C})-{\rm{PGL}}_2(\Bbb{R})$ for which $\alpha\circ f\circ\alpha^{-1}\in\Bbb{R}(z)$. But then taking complex conjugates implies that 
$$\bar{\alpha}\circ f\circ\overline{\alpha^{-1}}=\alpha\circ f\circ\alpha^{-1}\Rightarrow 
(\alpha^{-1}\circ\bar{\alpha})\circ f\circ(\alpha^{-1}\circ\bar{\alpha})^{-1}=f.$$
Hence the non-identity Möbius map  $\alpha^{-1}\circ\bar{\alpha}$ lies in ${\rm{Aut}}(f)$; a contradiction.}
\end{proof}
Thus, in order to have a well defined real entropy function,  we should remove from the space $\mathcal{M}'_d$ in \eqref{M'} 
the subspace of complex conjugacy classes of real maps with non-trivial Möbius symmetries; the subspace which will be denoted by
\begin{equation}\label{S'}
\mathcal{S}':=\left\{\langle f\rangle\in\mathcal{M}_d(\Bbb{C}) \,|\, f\in{\rm{Rat}}_d(\Bbb{R}), {\rm{Aut}}(f) \neq \{\mathbf{1}\}\right\}
\end{equation}
herein. This is obviously the intersection of the complex symmetry locus 
\begin{equation}\label{S}
\mathcal{S}(\Bbb{C})=\left\{\langle f\rangle\in\mathcal{M}_d(\Bbb{C})\,|\, f\in{\rm{Rat}}_d(\Bbb{C}), {\rm{Aut}}(f)\neq\{\mathbf{1}\}\right\}
\end{equation}
with $\mathcal{M}'_d$.
So we arrive at a well defined entropy function:
\begin{definition}\label{the real entropy function}
For any $d\geq 2$, \textit{the real entropy function} is defined as 
\begin{equation}\label{generalized function}
\begin{cases}
h_{\Bbb{R}}:\mathcal{M}'_d-\mathcal{S}'\rightarrow \left[0,\log(d)\right]\\
\langle f\rangle\mapsto h_{\rm{top}}\left(f\restriction_{\hat{\Bbb{R}}}:\hat{\Bbb{R}}\rightarrow\hat{\Bbb{R}}\right)
\end{cases}
\quad (f\in\Bbb{R}(z)).
\end{equation}
\end{definition}
The domain of definition is 
$\left\{\langle f\rangle\in\mathcal{M}_d(\Bbb{C}) \,|\, f\in\Bbb{R}(z), {\rm{Aut}}(f)=\{\mathbf{1}\}\right\}$
and the codomain is $\left[0,\log(d)\right]$ since 
$$ 
h_{\Bbb{R}}(f)=h_{\rm{top}}\left(f\restriction_{\hat{\Bbb{R}}}:\hat{\Bbb{R}}\rightarrow\hat{\Bbb{R}}\right)
\leq
h_{\rm{top}}\left(f:\hat{\Bbb{C}}\rightarrow\hat{\Bbb{C}}\right)=\log(d).
$$

It would be also useful to know the dimension of the symmetry locus; this is the content of the next proposition. See \cite{MR3709645} for a more refined version that determines the dimension of the locus in $\mathcal{M}_d(\Bbb{C})$ of rational maps with a prescribed automorphism group.

\begin{proposition}\label{codimension>1}
For all $d\geq 2$ the dimension of the symmetry locus 
\eqref{S}
is $d-1$ and, as a subvariety of $\mathcal{M}_d$ (which is a variety over $\Bbb{Q}$), it has a model over $\Bbb{Q}$ (so makes sense to talk of its 
$\Bbb{R}$-points). Moreover, 
$\dim_\Bbb{R}\mathcal{S}'=\dim_\Bbb{R}\mathcal{S}(\Bbb{R})=d-1$.
\end{proposition}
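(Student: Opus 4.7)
The plan is to stratify the symmetry locus according to the isomorphism type of the non-trivial automorphism group and show the top-dimensional stratum is contributed by involutions. Since any rational map of degree $d\geq 2$ has finite automorphism group, and since the finite subgroups of ${\rm{PGL}}_2(\Bbb{C})$ are (up to conjugacy) the cyclic $C_n$, dihedral $D_n$, and polyhedral $A_4,S_4,A_5$ groups, we may write $\mathcal{S}(\Bbb{C})=\bigcup_{G\neq 1}\mathcal{S}_G$ where $\mathcal{S}_G$ is the locus of classes $\langle f\rangle$ with $G\hookrightarrow{\rm{Aut}}(f)$ for a fixed finite subgroup $G$. The task reduces to showing that $\dim\mathcal{S}_{C_2}=d-1$ and $\dim\mathcal{S}_G<d-1$ for every other $G$.

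For the $C_2$ stratum, I would fix a representative involution, namely $\sigma:z\mapsto -z$, use the fact that ${\rm{PGL}}_2(\Bbb{C})$ acts transitively on its involutions by conjugation, and parametrize the locus of $f\in{\rm{Rat}}_d$ with $f\circ\sigma=\sigma\circ f$ (the odd rational maps of degree $d$). Writing $f=P/Q$ in lowest terms with $\max(\deg P,\deg Q)=d$, the condition $f(-z)=-f(z)$ forces $P$ and $Q$ to be of opposite parity; counting coefficients of polynomials of degree $\leq d$ containing only even, respectively only odd, powers gives an affine variety of dimension $d+2$ inside ${\rm{Rat}}_d$, which projectivizes to $d+1$. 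Quotienting by the centralizer $Z_{{\rm{PGL}}_2}(\sigma)\cong\Bbb{G}_m\rtimes C_2$, of dimension $1$, then yields $\dim\mathcal{S}_{C_2}=d-1$. For the other finite subgroups $G$ of ${\rm{PGL}}_2(\Bbb{C})$, imposing $f\circ g=g\circ f$ for a generating set of $G$ produces strictly more linear constraints on the coefficients while increasing the dimension of the centralizer only slightly (or leaving it finite in the non-abelian cases); a case-by-case check along the same lines should show that each such stratum has dimension $<d-1$. The main obstacle here is the bookkeeping in the polyhedral cases, but a clean shortcut is to observe that any $G\supsetneq C_2$ contains either $C_3$ or a second involution non-commuting with $\sigma$, giving an extra independent constraint.

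For the $\Bbb{Q}$-structure, I would work with the incidence variety
\[
\mathcal{I}:=\left\{(f,\alpha)\in{\rm{Rat}}_d\times{\rm{PGL}}_2\mid \alpha\circ f=f\circ\alpha,\ \alpha\neq 1\right\},
\]
which is cut out in ${\rm{Rat}}_d\times({\rm{PGL}}_2\setminus\{1\})$ by explicit polynomial equations with integer coefficients (entries of $\alpha\circ f-f\circ\alpha$ as sections of the appropriate line bundle). The projection of $\mathcal{I}$ to ${\rm{Rat}}_d$ and subsequent descent to $\mathcal{M}_d$ via the Silverman GIT quotient is a $\Bbb{Q}$-morphism, so its scheme-theoretic image $\mathcal{S}$ carries a $\Bbb{Q}$-structure. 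This legitimizes the notation $\mathcal{S}(\Bbb{R})$ and $\mathcal{S}(\Bbb{Q})$.

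Finally, for the real-dimension statements, I would exploit that the involution $\sigma:z\mapsto -z$ is already defined over $\Bbb{Q}$: the whole parametrization of $\mathcal{S}_{C_2}$ in the first step goes through verbatim over $\Bbb{R}$, yielding real odd rational maps with real dimension $d-1$ after the real Möbius quotient, and each such class contains a real representative. This gives $d-1\leq\dim_\Bbb{R}\mathcal{S}'\leq\dim_\Bbb{R}\mathcal{S}(\Bbb{R})$. The reverse bound $\dim_\Bbb{R}\mathcal{S}(\Bbb{R})\leq\dim_\Bbb{C}\mathcal{S}(\Bbb{C})=d-1$ is the standard fact that the real dimension of the $\Bbb{R}$-points of a complex variety never exceeds its complex dimension. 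Combining the inequalities completes the proof.
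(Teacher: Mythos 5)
Your architecture --- stratify $\mathcal{S}(\Bbb{C})$ by the finite subgroups $G$ of ${\rm{PGL}}_2(\Bbb{C})$ and argue that $C_2$ produces the top-dimensional stratum --- is in the same spirit as the paper's proof, which observes that any non-trivial automorphism generates some $C_n$ (conjugate to $\langle\omega_n\rangle$, $\omega_n:z\mapsto {\rm{e}}^{2\pi{\rm{i}}/n}z$) and bounds the $\omega_n$-commuting locus for each $n$. Your incidence-variety argument for the $\Bbb{Q}$-structure and the two-sided squeeze for $\dim_{\Bbb{R}}\mathcal{S}(\Bbb{R})$ also match the paper's reasoning. There are, however, two genuine problems.

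First, the arithmetic in the $C_2$ count is both off and internally inconsistent. An odd rational map $f=P/Q$ with $P$ odd and $Q$ even (or vice versa) and $\max(\deg P,\deg Q)=d$ has exactly $d+1$ coefficients, not $d+2$; projectivizing in $\Bbb{P}^{2d+1}$ kills one, leaving a $d$-dimensional locus in ${\rm{Rat}}_d$, and quotienting by the one-dimensional normalizer of $\langle\sigma\rangle$ gives $d-1$. As written you state $d+2$, then $d+1$ after projectivizing, then subtract the $1$-dimensional centralizer and claim $d-1$ --- but $d+1-1=d$, so the numbers as stated do not even produce the answer you assert.

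Second, and more seriously, the upper bound $\dim\mathcal{S}_G < d-1$ for $G\neq C_2$ is left unproved in precisely the cases that matter. Your ``clean shortcut'' applies only to $G\supsetneq C_2$; it says nothing about $G=C_n$ for odd $n\geq 3$, and in particular $G=C_3$ is the main competitor for dimension and must be controlled directly. The paper does this with an explicit count: maps commuting with $\omega_n$, $n\geq 3$, depend on at most $2\lceil(d+1)/3\rceil\leq d+1$ coefficients, and it also bounds $n\leq d+1$ so the union of strata is finite --- a point your framing silently needs as well, since a priori there are infinitely many cyclic isomorphism types. Finally, the paper proves that the $d-1$ bound is attained not merely by exhibiting the $C_2$ family but by a separate argument (a morphism from a two-dimensional generic fiber into the finite group ${\rm{Aut}}(f)$, contradicting the one-dimensionality of the centralizer of $\omega_n$); your proposal conflates ``the $C_2$ parametrization has $d-1$ free parameters'' with ``$\dim\mathcal{S}_{C_2}=d-1$'' without ruling out that the conjugacy action collapses extra dimensions. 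In short the skeleton is right, but the off-by-one in the $C_2$ count, the missing treatment of $C_n$ for $n\geq 3$, and the missing finiteness/exactness arguments need to be filled in.
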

\begin{proof}
{Let $f\in\Bbb{C}(z)$ be a rational map admitting a non-trivial automorphism 
$\alpha\in{\rm{PGL}_2}(\Bbb{C})$. Since    
${\rm{Aut}}(f)$ is finite (\cite[Proposition 4.65]{MR2316407}), 
after conjugation with an appropriate Möbius map one may assume that the cyclic subgroup generated by 
$\alpha$ has a generator of the form
$\omega_n:=z\mapsto{\rm{e}}^{\frac{2\pi{\rm{i}}}{n}}z$ where $n\geq 2$. The elements of 
${\rm{Rat}}_d(\Bbb{C})$ commuting with $\omega_n$ form the following union:
$$\bigcup_{0\leq r\leq n}\left\{\frac{\sum_{0\leq i\leq d,\,n\mid i-r-1}a_iz^i}{\sum_{0\leq i\leq d,\,n\mid i-r}b_iz^i}\in {\rm{Rat}}_d(\Bbb{C})\right\}.$$
Observe that $n$ cannot exceed $d+1$ because otherwise, the set above will be vacuous: there must be $i,j\in\{0,\dots,d\}$ with both $i-r-1$ and $j-r$ divisible by $n$ and these two numbers differ by at most $d+1$. The dimension of each of the sets in this union is at most $d$:  the number of coefficients $a_i, b_i$ appearing is precisely $d+1$ for $n=2$  and at most $2\left\lceil\frac{d+1}{3}\right\rceil\leq d+1$  for $n\geq 3$. Thus the projection from the affine space of coefficients into the quasi-projective variety  ${\rm{Rat}}_d(\Bbb{C})$ yields a  $d$-dimensional subset of 
${\rm{Rat}}_d(\Bbb{C})$. 
There is  one degree of freedom due to conjugation with scaling maps which preserves the forms appeared above. Hence the symmetry locus $\mathcal{S}(\Bbb{C})$ is of dimension at most $d-1$. Indeed, the equality is achieved; otherwise the generic fiber of the projection map from the aforementioned $d$-dimensional subspace of ${\rm{Rat}}_d(\Bbb{C})$ 
into $\mathcal{M}_d(\Bbb{C})$ is of dimension at least two. This means that there is a rational map $f\in\Bbb{C}(z)$ commuting with an $\omega_n$ such that for any $\alpha$ from a two-dimensional subset $Z$ of ${\rm{PGL}}_2(\Bbb{C})$, $\alpha\circ f\circ\alpha^{-1}$
commute with $\omega_n$ as well. In particular, there is a morphism 
$$
\begin{cases}
Z\to {\rm{Aut}}(f)\\
\alpha\mapsto \alpha^{-1}\circ\omega_n\circ\alpha
\end{cases}
$$
from the two-dimensional variety $Z$ into the finite set ${\rm{Aut}}(f)$. So the morphism has to be constant over a two-dimensional subvariety which cannot be the case as the centralizer of $\omega_n$ in ${\rm{PGL}}_2(\Bbb{C})$ is the subgroup of scaling maps $z\mapsto kz$ which is one-dimensional. \\
\indent
For the last part, notice that according to the above union, $\mathcal{S}(\Bbb{C})$ is the image of the algebraic set 
\footnotesize
$$\bigcup_{2\leq n\leq d+1}\bigcup_{0\leq r\leq n}\left\{\left[a_0:\dots :a_d: b_0:\dots: b_d\right]\in {\rm{Rat}}_d(\Bbb{C})\subset \Bbb{P}^{2d+1}(\Bbb{C})\,|\,
a_i=0\, \text{ if } n\nmid i-r-1;\,\, b_i=0\, \text{ if } n\nmid i-r \right\}$$
\normalsize 
 defined over the rationals.\\
\indent 
Finally, notice that the dimension count for $\mathcal{S}(\Bbb{C})$ implies upper bounds for dimensions of real subvarieties 
$\mathcal{S}'\subseteq \mathcal{S}(\Bbb{R})$:
$$\dim_\Bbb{R}\mathcal{S}'\leq\dim_\Bbb{R}\mathcal{S}(\Bbb{R})\leq\dim_{\Bbb{C}}\mathcal{S}(\Bbb{C})= d-1.
$$
It is not hard to show that the equality is achieved here: The complex dimension of $\mathcal{S}(\Bbb{C})$ coincides with the real dimension of its real locus $\mathcal{S}(\Bbb{R})$ provided that the complex variety $\mathcal{S}(\Bbb{C})$ has a smooth $\Bbb{R}$-point  in a highest dimensional irreducible component. For a generic choice of complex or real  numbers 
$a_{2k+1}$ and $b_{2k}$, 
the automorphism group of the rational map
$$\frac{\sum_{0< 2k+1\leq d}a_{2k+1}z^{2k+1}}{\sum_{0\leq 2k\leq d}b_{2k}z^{2k}}$$
is generated by $z\mapsto -z$ that on coefficients acts via $a_{2k+1}\mapsto -a_{2k+1}$
and $b_{2k}\mapsto b_{2k}$. Hence, assuming that these numbers are furthermore  positive, we get a $d$-dimensional submanifold of 
${\rm{Rat}}_d(\Bbb{R})$ that bijects onto a $(d-1)$-dimensional real submanifold of the set of smooth points of $\mathcal{S}(\Bbb{C})$ which is a subset of $\mathcal{S}'$ because every point of it represents the conjugacy class of a real map. Therefore, 
$ \dim_\Bbb{R}\mathcal{S}(\Bbb{R})\geq \dim_\Bbb{R}\mathcal{S}'\geq d-1$.}
\end{proof}

\subsection{Blaschke products}\label{Blaschke subsection}
The rational maps we are interested in are those with real coefficients; or equivalently, those that preserve the real circle 
$\hat{\Bbb{R}}$. But under a suitable Möbius change of coordinates (e.g. the Cayley transform) $\hat{\Bbb{R}}$ can be identified with the unit circle; and in studying holomorphic maps preserving the unit circle Blaschke products come up naturally. In the next proposition, we use Blaschke products to investigate the domain of the function $h_\Bbb{R}$ from Definition \ref{the real entropy function}; compare with \cite{MR0469386}.
\begin{proposition}\label{degree components}
The domain 
$\mathcal{M}'_d-\mathcal{S}'\subset\mathcal{M}_d(\Bbb{C})$
of the real entropy function \eqref{generalized function}
is an irreducible real variety of dimension $2d-2$. Nevertheless,  in its analytic topology, it decomposes to $d+1$ connected components of the same dimension corresponding to  topological degrees of  self-maps of the circle $\hat{\Bbb{R}}=\Bbb{R}\cup\{\infty\}$
that elements of ${{\rm{Rat}}_d}(\Bbb{R})$ induce:
\begin{equation}\label{disjoint union}
\mathcal{M}'_d-\mathcal{S}'=\bigsqcup_{-d\leq s\leq d,\,2\mid d-s}\left(\mathcal{M}'_{d,s}-\mathcal{S}'\right).
\end{equation}
Here, $\mathcal{M}'_{d,s}$ is the subspace of complex conjugacy classes of degree $d$ real rational maps $f$ for which the topological degree of the 
restriction 
$f\restriction_{\hat{\Bbb{R}}}:\hat{\Bbb{R}}\rightarrow\hat{\Bbb{R}}$
is $s$.
\end{proposition}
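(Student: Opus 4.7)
The plan is to break the statement into three parts and attack them in order: (i) the dimension count and irreducibility of $\mathcal{M}'_d - \mathcal{S}'$, (ii) the fact that the topological degree $s$ of $f\restriction_{\hat{\Bbb{R}}}$ is locally constant with values exactly $\{-d, -d+2, \dots, d\}$, and (iii) the connectedness of each stratum $\mathcal{M}'_{d,s} - \mathcal{S}'$.

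For (i), I would start from the natural projection $\pi : {\rm{Rat}}_d(\Bbb{R}) \to \mathcal{M}_d(\Bbb{C})$ whose image is $\mathcal{M}'_d$. Because ${\rm{Rat}}_d(\Bbb{R})$ is a Zariski-open subset of $\Bbb{P}^{2d+1}(\Bbb{R})$ it is an irreducible real variety of dimension $2d+1$, and its image is irreducible as well. Proposition \ref{symmetry locus} identifies the fiber of $\pi$ over any $\langle f\rangle \in \mathcal{M}'_d - \mathcal{S}'$ with the $3$-dimensional real orbit ${\rm{PGL}}_2(\Bbb{R}) \cdot f$, giving $\dim_{\Bbb{R}}(\mathcal{M}'_d - \mathcal{S}') = 2d-2$.

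For (ii), pick a regular value $y \in \hat{\Bbb{R}}$ of $f\restriction_{\hat{\Bbb{R}}}$: its $d$ preimages in $\hat{\Bbb{C}}$ split into $k$ real points and $d-k$ non-real ones in complex-conjugate pairs, so $k \equiv d \pmod 2$. The topological degree $s(f)$ is a signed count over those $k$ real preimages, whence $|s(f)| \leq k \leq d$ and $s(f) \equiv d \pmod 2$. Continuity of $f \mapsto f\restriction_{\hat{\Bbb{R}}}$ together with integrality of $s$ makes it locally constant, yielding a decomposition of $\mathcal{M}'_d - \mathcal{S}'$ into clopen strata indexed by the $d+1$ admissible degrees.

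The substantive step is (iii), for which I would conjugate via the Cayley transform $z \mapsto (z-{\rm{i}})/(z+{\rm{i}})$ so that real rational maps become degree-$d$ rational maps $\tilde{f}$ preserving the unit circle $S^1$. Schwarz reflection gives $\overline{\tilde{f}(1/\bar{z})} = 1/\tilde{f}(z)$, pairing each zero $a$ of $\tilde{f}$ with a pole at $1/\bar{a}$, so every such $\tilde{f}$ takes the Blaschke-like form
\[
  \tilde{f}(z) \;=\; c \prod_{i=1}^{d} \frac{z - a_i}{1 - \bar{a}_i z}, \qquad |c| = 1,\; a_i \in \hat{\Bbb{C}} \setminus S^1.
\]
If exactly $k$ of the $a_i$ lie in the open disk $\Bbb{D}$, the argument principle evaluates the topological degree on $S^1$ as $k - (d-k) = 2k - d$, confirming simultaneously that every admissible value of $s$ is attained and that each stratum is parametrized by the connected set ${\rm{Sym}}^k(\Bbb{D}) \times {\rm{Sym}}^{d-k}(\hat{\Bbb{C}} \setminus \overline{\Bbb{D}}) \times S^1$ with a codimension-two discriminant removed (where some $a_i = 1/\bar{a}_j$ and the map drops degree). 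Projecting to $\mathcal{M}'_d$ preserves connectedness, and deleting $\mathcal{S}'$—of real codimension $d-1$ in $\mathcal{M}'_d$ by Proposition \ref{codimension>1}—cannot disconnect the stratum when $d \geq 3$. The main obstacle is precisely the borderline case $d=2$, where $\mathcal{S}'$ is a real hypersurface in $\mathcal{M}'_2$; here connectedness must be checked directly in the explicit coordinates on $\mathcal{M}_2(\Bbb{C}) \cong \Bbb{C}^2$ used in \cite[\S 10]{MR1246482}.
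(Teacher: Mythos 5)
Your proof follows essentially the same strategy as the paper: dimension and irreducibility via the projection from ${\rm{Rat}}_d(\Bbb{R})$, local constancy of the topological degree together with the parity constraint $s\equiv d\pmod 2$, connectedness of each stratum via the Blaschke-product parametrization obtained from the Cayley transform, and finally a codimension count showing that removing $\mathcal{S}'$ cannot disconnect the strata for $d\geq 3$, with $d=2$ handled by explicit inspection of $\mathcal{M}_2(\Bbb{R})$. The minor deviations --- a fiber-dimension count instead of the paper's dimension sandwich, a regular-value argument for the parity of $s$ instead of the integral formula $\int_{\hat{\Bbb{R}}}f^*\bigl(\tfrac{{\rm{d}}x}{\pi(1+x^2)}\bigr)$, and your (welcome) explicit mention of the codimension-two discriminant $a_i\bar{a}_j=1$ in the Blaschke parameter space, which the paper glosses over --- are stylistic variants of the same route.
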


\begin{proof}
We have
$$\dim_{\Bbb{R}}\mathcal{M}'_d\leq\dim_{\Bbb{R}}\mathcal{M}_d(\Bbb{R})\leq\dim_{\Bbb{C}}\mathcal{M}_d(\Bbb{C})
=2d-2.$$
The surjective morphism ${{\rm{Rat}}_d}(\Bbb{R})\big/{\rm{PGL}}_2(\Bbb{R})\rightarrow\mathcal{M}'_d$
 is injective on the Zariski open subset of classes without twists (e.g. away from the preimage of the closed subvariety $\mathcal{S}'$).
 The projection also indicates that $\mathcal{M}'_d$ is irreducible, being a surjective image of  ${{\rm{Rat}}_d}(\Bbb{R})$ which is irreducible itself as it is the complement of the resultant hypersurface in $\Bbb{P}^{2d+1}(\Bbb{R})$.
Therefore, $\mathcal{M}'_d$ is irreducible in the Zariski topology; and is of real dimension $2d-2$. The same statements hold for its Zariski open subset $\mathcal{M}'_d-\mathcal{S}'$.\\
\indent
Since the field $\Bbb{R}$ is not algebraically closed, the irreducibility of ${\rm{Rat}}_d(\Bbb{R})$ or $\mathcal{M}'_d-\mathcal{S}'$  does not guarantee their connectedness in the analytic topology. As a matter of fact, maps in ${\rm{Rat}}_d(\Bbb{R})$
whose restrictions to the real circle have different topological degrees cannot lie in the same connected component. 
To see this, notice that the topological degree $s\in\left\{-d,\dots,0,\dots,d\right\}$ of the restriction of $f\in{\rm{Rat}}_d(\Bbb{R})$  to 
 $\hat{\Bbb{R}}$
can be thought of as the value of the integral 
$$
\mathlarger{\int}_{\hat{\Bbb{R}}}f^*\left(\frac{{\rm{d}}x}{\pi(1+x^2)}\right)
=\mathlarger{\int}_{-\infty}^{\infty}\frac{f'(x){\rm{d}}x}{\pi(1+f(x)^2)}
$$
due to the fact that $\frac{{\rm{d}}x}{\pi(1+x^2)}$ is a normalized volume form for the circle  $\hat{\Bbb{R}}=\Bbb{R}\cup\{\infty\}$.
If $f_n\to f$ in the analytic topology of ${\rm{Rat}}_d(\Bbb{R})$ (i.e. the usual topology on the space of coefficients), then 
on the real axis one has
$$\frac{f'_n}{\pi(1+f_n^2)}\restriction_{\Bbb{R}}\,\,\rightrightarrows\,\,\frac{f'}{\pi(1+f^2)}\restriction_{\Bbb{R}}$$ 
for the integrand appeared above. Consequently, the integer-valued function
$$\deg_{\rm{top}}:{\rm{Rat}}_d(\Bbb{R})\rightarrow\left\{-d,\dots,0,\dots,d\right\}$$
is continuous and hence its level sets are disjoint unions of connected components of ${\rm{Rat}}_d(\Bbb{R})$.
The topological degree of a circle map is preserved under conjugation by a diffeomorphism of the circle and Proposition 
\ref{symmetry locus} implies that if real maps $f,g$ give rise to the same class in $\mathcal{M}_d(\Bbb{C})$ away from 
$\mathcal{S}'$, then they are conjugate by a Möbius transformation that preserves $\hat{\Bbb{R}}$. 
Therefore, the topological degree descends to a continuous function 
$$\deg_{\rm{top}}:\mathcal{M}'_d-\mathcal{S}'\rightarrow\left\{-d,\dots,0,\dots,d\right\}$$
any  level set $\mathcal{M}'_{d,s}-\mathcal{S}'$ of which is a union of connected components of  $\mathcal{M}'_d-\mathcal{S}'$. Here  
\begin{equation}\label{component}
\mathcal{M}'_{d,s}:=\left\{\langle f\rangle\in\mathcal{M}_d(\Bbb{C})\mid f\in{\rm{Rat}}_d(\Bbb{R}),
\deg_{\rm{top}}\left(f\restriction_{\hat{\Bbb{R}}}:\hat{\Bbb{R}}\rightarrow\hat{\Bbb{R}}\right)=s\right\}
\end{equation}
as defined in the statement of the proposition.\footnote{It has to be mentioned that in $\mathcal{M}_d(\Bbb{C})$ these subsets might intersect along the lower dimensional subset $\mathcal{S}'$  because a class on the symmetry locus may have real representatives that  restricted to $\hat{\Bbb{R}}$ have
different topological degrees; e.g., the quadratic maps in Example \ref{symmetry locus entropy} that come with topological degrees $\pm 2$ and $0$. Equivalently, $\deg_{\rm{top}}$ is not a well defined function on the entirety of $\mathcal{M}'_d$.}\\
\indent
We next show that $\mathcal{M}'_{d,s}$ is non-vacuous if and only if $d$ and $s\in\left\{-d,\dots,0,\dots,d\right\}$
have the same parity and for any such $s$ the subspace $\mathcal{M}'_d-\mathcal{S}'$ is connected. This will establish the partition 
\eqref{disjoint union} of $\mathcal{M}'_d-\mathcal{S}'$ to its connected components and will conclude the proof. \\
\indent
Let us proceed  with a parametrization of degree $d$ real rational maps with $\deg_{\rm{top}}=s$. It is more convenient to exchange $\hat{\Bbb{R}}$ with $\left\{z\in\Bbb{C}\,\big|\, |z|=1\right\}$ via the Cayley transform
\begin{equation}\label{Cayley}
\begin{cases}
\text{the unit circle}\rightarrow\text{the real circle}\\
z\mapsto x={\rm{i}}\frac{1+z}{1-z}
\end{cases}
\end{equation}
and concentrate on degree $d$ rational maps $f\in\Bbb{C}(z)$ that keep the unit circle $\left\{z\in\Bbb{C}\,\big|\, |z|=1\right\}$
invariant instead. The reflection $z\mapsto\frac{1}{\bar{z}}$ with respect to the unit circle commutes with such an $f$, so  the roots and the poles of $f$ on the Riemann sphere occur in $\left(\text{root},\text{pole}\right)$ pairs like $\left(q,\frac{1}{\bar{q}}\right)$ where $q$ is away from the unit circle. Therefore, assuming that $f$ has $k$ roots $a_1,\dots,a_k$ inside the unit circle and $d-k$ roots
$b_1,\dots,b_{d-k}$ outside of it (both written by multiplicity), its poles would be reciprocals of 
$\bar{a_1},\dots,\bar{a_k}$ outside the unit circle and 
reciprocals of $\bar{b_1},\dots,\overline{b_{d-k}}$ inside it. The rational maps $f(z)$ is thus a scalar multiple of the Blaschke product
$$
\prod_{i=1}^{k}\left(\frac{z-a_i}{1-\bar{a_i}z}\right)
\prod_{j=1}^{d-k}\left(\frac{z-b_j}{1-\bar{b_j}z}\right)
$$
that  preserves the unit circle as well. We conclude that $f(z)$ has a unique description as 
\begin{equation}\label{Blaschke'}
f(z)={\rm{e}}^{2\pi{\rm{i}}c}\prod_{i=1}^{k}\left(\frac{z-a_i}{1-\bar{a_i}z}\right)
\prod_{j=1}^{d-k}\left(\frac{z-b_j}{1-\bar{b_j}z}\right)\quad 
\left(|a_1|,\dots,|a_k|<1;\, |b_1|,\dots,\big|b_{d-k}\big|>1; \, c\in\Bbb{R}/\Bbb{Z}\right).
\end{equation}
 Pulling back 
$\frac{{\rm{d}}x}{\pi(1+x^2)}$
with the Cayley transform \eqref{Cayley} results in the normalized volume form 
$$\frac{1}{2\pi}{\rm{d}}\theta=\frac{1}{2\pi{\rm{i}}}\frac{{\rm{d}}z}{z}$$
for the unit circle. The topological degree $s$ of the restriction $f\restriction_{\{|z|=1\}}:\{|z|=1\}\rightarrow\{|z|=1\}$
of the map in \eqref{Blaschke'} then is
$$\frac{1}{2\pi}\int_{|z|=1}f^*({\rm{d}}\theta)=
\frac{1}{2\pi{\rm{i}}}\int_{|z|=1}f^*\left(\frac{{\rm{d}}z}{z}\right)=
\frac{1}{2\pi{\rm{i}}}\int_{|z|=1}\frac{f'(z)}{f(z)}{\rm{d}}z\stackrel{\text{argument principle}}{=}k-(d-k)=2k-d.$$
We observe that $k=\frac{d+s}{2}$ and an integer $s$ from $\left\{-d,\dots,0,\dots,d\right\}$ can be realized as the topological degree of the restriction precisely when its parity is the same as that of $d$. Now it is easy to verify that there are $d+1$ choices for $s$. \\
\indent
Fixing such an $s$, putting $k$ to be $\frac{d+s}{2}$ in \eqref{Blaschke'} and then varying $a_i$'s, $b_j$'s and $c$ respectively inside, outside and on the unit circle, and 
finally conjugating with the Cayley transform parametrizes a submanifold of ${\rm{Rat}}_d(\Bbb{R})$ diffeomorphic  to 
$\Bbb{R}^{2d}\times S^1$ which is the subspace of real rational maps of degree $d$ with $\deg_{{\rm{top}}}=s$. 
This full-dimensional submanifold of ${\rm{Rat}}_d(\Bbb{R})$  surjects onto the subspace 
$\mathcal{M}'_{d,s}$ of $\mathcal{M}'_d$ appeared in \eqref{component}; a subspace that is therefore connected and of codimension zero. Finally, we argue that removing $\mathcal{S}'$ in 
\eqref{disjoint union}
cannot affect the connectedness of  $\mathcal{M}'_{d,s}$: If $d\geq 3$, invoking Proposition \ref{codimension>1}, the codimension of 
$\mathcal{M}'_{d,s}\bigcap\mathcal{S}'$ in $\mathcal{M}'_{d,s}$
is 
$$(2d-2)-(d-1)\geq 2.$$
Taking preimages under ${\rm{Rat}}_d(\Bbb{R})\rightarrow\mathcal{M}'_d$, we deduce that in the $\deg_{\rm{top}}=s$ component of ${\rm{Rat}}_d$ -- which is a manifold -- the closed subset of maps with non-trivial automorphisms is of codimension at least two, and hence the complement -- which surjects onto $\mathcal{M}'_{d,s}-\mathcal{S}'$ -- is connected. For $d=2$, we rely on the results of 
\cite{MR1246482}: $\mathcal{S}'=\mathcal{S}(\Bbb{R})$ is precisely the curve in
$\mathcal{M}'_2=\mathcal{M}_2(\Bbb{R})=\Bbb{R}^2$
that separates
$\mathcal{M}'_{2,+2}$,
$\mathcal{M}'_{2,-2}$
and 
$\mathcal{M}'_{2,0}$;
see \cite[fig. 15]{MR1246482}. 
\end{proof}

\begin{example}\label{Blaschke1}
For a given $0\neq s\in\left\{\pm 1,\dots,\pm d\right\}$ with $s\equiv d \pmod 2$,
the Blaschke product 
\begin{equation}\label{Blaschke}
f(z)={\rm{e}}^{2\pi{\rm{i}}c}\prod_{i=1}^{\frac{d+s}{2}}\left(\frac{z-a_i}{1-\bar{a_i}z}\right)
\prod_{j=1}^{\frac{d-s}{2}}\left(\frac{z-b_j}{1-\bar{b_j}z}\right)\,
\left(|a_1|,\dots,|a_{\frac{d+s}{2}}|<1;\, |b_1|,\dots,\big|b_{\frac{d-s}{2}}\big|>1; \, c\in\Bbb{R}/\Bbb{Z}\right)\end{equation}
appeared in the proof of Proposition \ref{degree components}
 induces a (unramified) degree $s$ covering of $|z|=1$ provided that 
$a_i$'s and $b_j$'s are sufficiently close to $0$ and $\infty$ respectively. Conjugation with the Cayley transform \eqref{Cayley} then yields a degree $d$ rational map that restricts to a degree $s$ cover $\hat{\Bbb{R}}\rightarrow\hat{\Bbb{R}}$; thus    
a degree $d$ real map of real entropy $\log(|s|)$. For generic choices of  $a_i$'s and $b_j$'s, the corresponding map $f$ will be away from the symmetry locus.
We conclude that $h_\Bbb{R}$ is $\log(|s|)$ over some non-empty analytic open subset of the connected component
$\mathcal{M}'_{d,s}$.\\
\indent
The dynamics of \eqref{Blaschke} on the unit circle is well understood once the map is \textit{expanding} (e.g. as $a_i\to 0$ and $b_j\to\infty$) since then, according to \cite{MR0240824}, it should be conjugate to $z\mapsto z^s$. Such expanding maps do occur once $|s|\geq 2$. Notice that for a Blaschke product $f$ from \eqref{Blaschke} which is expanding on the unit circle, the Julia set indeed contains the unit circle.   Finally, we point out that although when $|s|<d$ the unit circle does not capture all of the entropy of the ambient endomorphism of the Riemann sphere, the dynamics on the circle demonstrates certain ``rigidity'' here in the sense that  once two expanding circle maps of the form \eqref{Blaschke} are absolutely continuously conjugate, the ambient rational maps must be Möbius conjugate  
\cite[Theorem 4]{MR796755}.\footnote{The Blaschke products we consider here are more general  than \cite{MR796755} where all zeros have been assumed to be inside the unit circle. Nevertheless, the proof carries over with no difficulty: by the Lefschetz fixed point formula, expanding Blaschke products of form \eqref{Blaschke} have 
$|s-1|<d+1$ fixed points on the unit circle and hence at least one fixed point inside. This implies that the circle map admits an invariant measure given by the Poisson density with respect to this fixed point \cite[Proposition 2]{MR703758}. A Möbius conjugacy puts this fixed point at the origin and turns the aforementioned invariant measure to the Lebesgue measure on the circle. One can then invoke the rigidity result \cite[Proposition 1]{MR796755}  concerning the expanding circle maps that preserve Lebesgue.  } 
\\
\indent
For $s= 1$ and $d$ odd,  this type of Blaschke product is used in a construction of rational maps with Herman rings; see \cite[\S15]{MR2193309}. In such a situation, 
$$f\restriction_{\{|z|=1\}}:\{|z|=1\}\rightarrow\{|z|=1\}$$
 would be an orientation-preserving analytic diffeomorphism  whose rotation number $\rho$ can be any desired element in $\Bbb{R}/\Bbb{Z}$ after a suitable adjustment of $c$ in \eqref{Blaschke} \cite[\S 15, Lemma 15.3]{MR2193309}. The entropy of this subsystem is then zero. 
 If $\rho$ is rational, then every orbit on the circle converges to a periodic orbit 
(see \cite[Exercise 7.1.4]{MR1963683}) which is a non-repelling orbit of the rational map $f$. 
So the only possible Julia points on the invariant circle are parabolic points. As an example, consider the rational map
$$f(z)=z+\frac{1}{z^2+2}$$
that induces an orientation-preserving self-diffeomorphism of $\hat{\Bbb{R}}$ with rotation number $\rho=0$, where 
the orbit of any real number tends to the parabolic fixed point $\infty$ as $f(x)>x$ for all $x\in\Bbb{R}$.\\
\indent
For irrational $\rho$, by Denjoy's theorem  there is a topological conjugacy between 
$f\restriction_{\{|z|=1\}}:\{|z|=1\}\rightarrow\{|z|=1\}$
 and the irrational rotation $t\in\Bbb{R}/\Bbb{Z}\mapsto t+\rho\in\Bbb{R}/\Bbb{Z}$; in particular, every single orbit of this subsystem is dense. 
So the unit circle is  entirely included in either the Julia set or the Fatou set. 
In the latter situation,  the Fatou component of $f$ having this circle is a fixed rotation domain that has  $|z|=1$ as a leaf of its natural foliation. In particular, the diffeomorphism 
$f\restriction_{\{|z|=1\}}:\{|z|=1\}\rightarrow\{|z|=1\}$
 is real analytically linearizable.  Conversely, if the conjugacy to the rotation map $t\in\Bbb{R}/\Bbb{Z}\mapsto t+\rho\in\Bbb{R}/\Bbb{Z}$ is real analytic, then it can be extended to a small annulus around the unit circle, so the circle is in a Herman ring. There is a complete classification based on Diophantine properties
due to Yoccoz of rotation numbers for which the existence of a real analytic linearization
is guaranteed \cite{MR1924912}. For  irrational rotation numbers $\rho$ that are ``too well approximated'' by rational numbers,  there are real analytic diffeomorphisms of the unit circle of rotation number $\rho$ that do not admit even $C^\infty$ linearizations.  Hence  if in \eqref{Blaschke} we fix $a_1,\dots,a_{\frac{d+1}{2}}$ near  $0$ and  
$b_1,\dots,b_{\frac{d-1}{2}}$ near $\infty$ and then adjust $c$ so that we get such a rotation number for the 
induced self-diffeomorphism of $|z|=1$, then all points on the unit circle would be Julia despite the fact that  the real entropy vanishes. 
\end{example}

\subsection{Properties of $h_\Bbb{R}:\mathcal{M}'_d-\mathcal{S}'\rightarrow \left[0,\log(d)\right]$}\label{properties subsection}

We now prove the first part of Theorem \ref{temp1}.
\begin{proposition}\label{dimension}
For any $d\geq 2$,  the function 
$h_\Bbb{R}:\mathcal{M}'_d-\mathcal{S}'\rightarrow \left[0,\log(d)\right]$
is surjective and continuous (in the analytic topology).
\end{proposition}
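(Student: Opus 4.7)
The plan is to prove continuity and surjectivity separately, using the first to deduce the second via the intermediate value theorem. For continuity, I would lift to the space of coefficients via the projection $\pi:{\rm{Rat}}_d(\Bbb{R})\setminus\pi^{-1}(\mathcal{S}')\to\mathcal{M}'_d-\mathcal{S}'$, which by Proposition \ref{symmetry locus} is an open quotient whose fibers are ${\rm{PGL}}_2(\Bbb{R})$-orbits. Continuity of $h_\Bbb{R}$ then reduces to continuity of $f\mapsto h_{\rm{top}}(f|_{\hat{\Bbb{R}}})$ on ${\rm{Rat}}_d(\Bbb{R})$. The key observation is that $f|_{\hat{\Bbb{R}}}$ is a real analytic piecewise monotone self-map of the circle whose number of turning points is at most $2d-2$, because $f$ has only $2d-2$ critical points on the Riemann sphere. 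This modality bound is uniform over the entire parameter space. Since the map $f\mapsto f|_{\hat{\Bbb{R}}}$ is continuous into $C^\infty(\hat{\Bbb{R}},\hat{\Bbb{R}})$, we fall precisely into the setting of the continuity theorem for topological entropy of families of piecewise monotone maps of uniformly bounded modality cited in \cite{MR1372979}; applying it yields continuity of $h_\Bbb{R}$.

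For surjectivity, I would exhibit explicit real maps attaining the extreme values $0$ and $\log(d)$ and invoke the intermediate value theorem along a connecting path. The value $\log(d)$ is realized at points represented by the Cayley conjugate of a Blaschke product as in Example \ref{Blaschke1} with $s=\pm d$ and generic parameters close to $0$ and $\infty$: its restriction to the circle is a degree-$d$ covering and so has entropy $\log(d)$ by \cite[Theorem $1'$]{MR579440}. The value $0$ is realized by a generic real hyperbolic map with trivial real dynamics, for instance a small symmetry-breaking perturbation of a polynomial of the form $(z-a)^d+a$ whose non-wandering set on $\hat{\Bbb{R}}$ is finite (such a perturbation has $h_\Bbb{R}=0$ because its real Julia set is finite, a purely topological fact independent of the continuity result just established). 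Connect these two by a real analytic path $\gamma:[0,1]\to{\rm{Rat}}_d(\Bbb{R})$, perturbed generically to avoid $\pi^{-1}(\mathcal{S}')$ (whose codimension is at least one by Proposition \ref{codimension>1}, and at least two for $d\geq 3$, so such perturbations are available; in the special case $d=2$ one stays inside the single component $\mathcal{M}'_{2,0}$, on which the entropy range already covers $[0,\log(2)]$). The continuity of $t\mapsto h_\Bbb{R}(\pi(\gamma(t)))$ together with the intermediate value theorem then delivers surjectivity onto $[0,\log(d)]$.

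The hard part is the continuity. Lower semi-continuity of $h_{\rm{top}}$ on $C^0$ families is standard, but upper semi-continuity can fail when critical points appear or coalesce in the limit, allowing the entropy to jump. The uniform bound of $2d-2$ on the number of turning points of $f|_{\hat{\Bbb{R}}}$, which is valid over the whole family purely on account of the degree constraint and cannot degenerate under perturbation, is precisely the hypothesis required by \cite{MR1372979}. Once continuity is secured, the surjectivity argument is a routine application of the intermediate value theorem to explicit extremal examples.
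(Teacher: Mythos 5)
Your continuity argument is essentially the same as the paper's: lift to coefficient space, observe that the restriction $f\restriction_{\hat{\Bbb{R}}}$ has modality uniformly bounded by the degree, and invoke \cite{MR1372979}; this part is fine.

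The surjectivity argument has a genuine gap. Your two extremal witnesses live in different connected components of ${\rm{Rat}}_d(\Bbb{R})$, so the path $\gamma:[0,1]\to{\rm{Rat}}_d(\Bbb{R})$ that you propose to join them does not exist. Concretely, the Cayley conjugate of a Blaschke product ``as in Example \ref{Blaschke1} with $s=\pm d$'' has $\deg_{\rm{top}}\left(f\restriction_{\hat{\Bbb{R}}}\right)=\pm d$, whereas any real perturbation of $(z-a)^d+a$ has $\deg_{\rm{top}}\left(f\restriction_{\hat{\Bbb{R}}}\right)=0$ (for $d$ even) or $+1$ (for $d$ odd). By the proof of Proposition \ref{degree components}, $\deg_{\rm{top}}$ is a continuous $\Bbb{Z}$-valued function on ${\rm{Rat}}_d(\Bbb{R})$, so its level sets are unions of connected components: no continuous path in ${\rm{Rat}}_d(\Bbb{R})$ can join a map of topological degree $\pm d$ to one of topological degree $0$ or $1$. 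The intermediate value theorem therefore never gets off the ground. (The issue is not the special case $d=2$ as you suggest --- it occurs for every $d\geq 2$, since $|{\pm d}|>1$ always.) Worse, by Remark \ref{degree-bound} (equivalently Propositions \ref{lower bound} and \ref{entropy values}), the real entropy is identically $\log(d)$ on the component $\mathcal{M}'_{d,\pm d}$, so choosing the $\log(d)$ witness there is wasted: no intermediate values are attained on that component.

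The paper avoids this by producing a single \emph{connected} family realizing both extremes. It invokes \cite[Theorem 3.2]{MR1736945} to parametrize boundary-anchored degree-$d$ polynomial interval maps of full modality by their critical values $(v_1,\dots,v_{d-1})$, a connected parameter space. As the $v_i$ collapse alternatingly to $0$ the maps degenerate to $x\mapsto(-x)^d$ (entropy $\to 0$), while at $v_i=(-1)^i$ the map has $d$ surjective monotone laps and hence entropy $\log(d)$. All these polynomials share the same topological degree on $\hat{\Bbb{R}}$ ($0$ for $d$ even, $-1$ for $d$ odd), so the family really does lie in a single component and the connectedness argument goes through. To salvage your proof you would need to replace the Blaschke witness with a $\log(d)$-entropy map in the same $\deg_{\rm{top}}$-component as your zero-entropy witness --- for instance a Chebyshev-type polynomial --- and then your path-and-IVT scheme would work.
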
 

\begin{proof}
 Fix an $s\in\left\{-d,\dots,0\dots,d\right\}$ with $s\equiv d \pmod{2}$. 
As observed in the proof of Proposition \ref{degree components} and also in Example \ref{Blaschke1}, 
the component  $\deg_{{\rm{top}}}=s$  of ${\rm{Rat}}_d(\Bbb{R})$ can be identified with the space of Blaschke products
$$
{\rm{e}}^{2\pi{\rm{i}}c}\prod_{i=1}^{\frac{d+s}{2}}\left(\frac{z-a_i}{1-\bar{a_i}z}\right)
\prod_{j=1}^{\frac{d-s}{2}}\left(\frac{z-b_j}{1-\bar{b_j}z}\right)\,
\left(|a_1|,\dots,|a_{\frac{d+s}{2}}|<1;\, |b_1|,\dots,\big|b_{\frac{d-s}{2}}\big|>1; \, c\in\Bbb{R}/\Bbb{Z}\right)
$$
appeared in \eqref{Blaschke}. A convergence of the parameters  of the above product  inside
$$\{|z|<1\}^{\frac{d+s}{2}}\times\{|z|>1\}^{\frac{d-s}{2}}\times\Bbb{R}/\Bbb{Z}$$
results in the uniform convergence of the corresponding 
holomorphic functions over some thin enough annulus around the unit circle $|z|=1$ 
and therefore, the convergence of the induced maps of the unit circle in the $C^\infty$ topology.     
The continuity claim follows immediately from \cite{MR1372979}: in the $C^1$ topology, the topological entropy is continuous
for circle or interval maps of bounded modality. 
Consequently, the real entropy is a continuous function on the space ${\rm{Rat}}_d(\Bbb{R})$ of  real rational maps of degree $d$. 
This function, after being factored  through the local homeomorphism  from  
the open subset of maps without Möbius  symmetries in ${\rm{Rat}}_d(\Bbb{R})$
onto $\mathcal{M}'_d-\mathcal{S}'$,
 descends to the function
$h_\Bbb{R}:\mathcal{M}'_d-\mathcal{S}'\rightarrow \left[0,\log(d)\right]$ which is thus continuous as well.\\
\indent
Because of the continuity, to obtain the surjectivity of $h_\Bbb{R}$ it suffices to construct a family of 
real rational maps of degree $d$ parametrized over a connected space for which the real entropy gets arbitrarily close to 
both extremes $0$ and $\log(d)$. We invoke the result \cite[Theorem 3.2]{MR1736945}
 that allows us to parametrize the class of boundary-anchored polynomial interval maps of full modality via their critical values:
\begin{itemize}
\item[] \textit{Given numbers $v_1,\dots,v_{d-1}\in [-1,1]$ with $(-1)^i(v_i-v_{i-1})>0$ for every $0<i\leq d$ where $v_0:=1$ and
$v_d:=(-1)^d$, there is a unique boundary-anchored polynomial map $f:[-1,1]\rightarrow [-1,1]$
of degree $d$ that has distinct critical points 
$$-1<c_1<\dots<c_{d-1}<1$$
such that 
$f(c_i)=v_i$ for all $0<i<d$
and $f(-1)=v_0, f(1)=v_d$ on the boundary.}
\end{itemize} 
The space of these tuples $(v_1,\dots,v_{d-1})\in [-1,1]^{d-1}$ is obviously connected. As $v_i\to 0^{+}$ for 
$0<i<d$ even and $v_i\to 0^{-}$ for $0<i<d$ odd, the corresponding maps tend to $x\mapsto (-x)^d$
whose real entropy is zero, whereas when $v_i=(-1)^i$ for each
$0<i<d$, the corresponding degree $d$ polynomial map 
$f:[-1,1]\rightarrow [-1,1]$ 
would have $d$ surjective monotonic pieces;
so the iterate $f^{\circ n}$  needs to have $d^n$ laps and therefore the exponential growth rate of modality of iterates is $\log(d)$.
\end{proof}

\begin{remark}\label{degree-bound}
Notice that for the family exhibited at the end of the preceding proof the topological degree $s$ is zero for $d$ even and is $-1$ for $d$ odd. Indeed, although surjective on $\mathcal{M}'_d-\mathcal{S}'$, $h_\Bbb{R}$ is not surjective on all constituent parts of 
$\mathcal{M}'_d-\mathcal{S}'$ that appeared in \eqref{disjoint union}; a well known result from \cite{MR0458501} asserts that for  a $C^1$ self-map of a compact connected differentiable oriented manifold of degree $s$ the topological entropy is at least $\log\left(\max(|s|,1)\right)$. This is much easier to prove for circle maps; Appendix \ref{appendix B} presents a self-contained proof for the fact that $\left[\log\left(\max(|s|,1)\right),\log(d)\right]$ is the range of the restriction of the entropy function to $\mathcal{M}'_{d,s}-\mathcal{S}'$.
\end{remark}

\begin{remark}
It is natural to ask  what jumps in the values of the function 
$h_{\Bbb{R}}:\mathcal{M}'_d-\mathcal{S}'\rightarrow \left[0,\log(d)\right]$  occur
when one crosses a multi-valued point (which necessarily lies on $\mathcal{S}'$). In general, 
we expect only finitely many jumps at each point of discontinuity because the set of twists of an
$f\in\Bbb{R}(z)$ over $\Bbb{R}$ can be identified with the kernel of the morphism
$$H^1\left({\rm{Gal}}\left(\Bbb{C}/\Bbb{R}\right), {\rm{Aut}}(f)\right)
\rightarrow
H^1\left({\rm{Gal}}\left(\Bbb{C}/\Bbb{R}\right), {\rm{PGL}}_2(\Bbb{C})\right)
$$
(see \cite[Theorem 4.79]{MR2316407} for details) and is therefore finite.
\\
\indent
It is not hard to check  that for $d=2$
$$\mathcal{S}'=\mathcal{S}(\Bbb{R})=
\left\{\left\langle\frac{1}{\mu}\left(z+\frac{1}{z}\right)\right\rangle
=\left\langle\frac{1}{\mu}\left(z-\frac{1}{z}\right)\right\rangle\Big|\,\mu\in\Bbb{R}-\{0\}\right\};$$
see \cite[\S5]{MR1246482}. Therefore, Example \ref{symmetry locus entropy} indicates that the entropy jumps from
$0$ to $\log(2)$ or vice versa as we cross the curve $\mathcal{S}(\Bbb{R})$ in $\mathcal{M}_2(\Bbb{R})$ (which turns out to be 
the affine plane $\Bbb{R}^2$; \cite[\S10]{MR1246482}).\\
\indent 
A more complicated behavior is anticipated in higher degrees since the modalities of real representatives may differ more drastically. 
As an example, consider the  $1$-parameter family $\left\{z^3-az\right\}_{0\leq a\leq 3}$ of real cubics that all admit the symmetry $z\mapsto -z$.
Conjugating with $z\mapsto{\rm{i}}z$, each $z^3-az$ has the alternative real model 
$-z^3-az$ for which the restriction to the real axis is strictly decreasing and hence of zero entropy, whereas  
the original family restricts to $1$-parameter family of self-maps of $\Bbb{R}$ that starts with $x\mapsto x^3$ whose entropy is zero and ends with the (monic) Chebyshev polynomial $x\mapsto x^3-3x$
whose entropy is $\log(3)$ as there is a finite semi-conjugacy $\theta\mapsto 2\cos(2\pi\theta)$ onto it from $\theta\in\Bbb{R}/\Bbb{Z}\mapsto 3\theta\in\Bbb{R}/\Bbb{Z}$. We observe that, unlike the case of $d=2$, every value from the range of 
$h_\Bbb{R}:\mathcal{M}'_3-\mathcal{S}'\rightarrow\left[0,\log(3)\right]$ happens as a jump of values in vicinity of a point of discontinuity.
\end{remark}

\subsection{Comparing $\mathcal{M}'_d$ with the real locus $\mathcal{M}_d(\Bbb{R})$}\label{comparison subsection}
Next,  we elaborate more on the domain 
$\mathcal{M}'_d-\mathcal{S}'=\mathcal{M}'_d-\mathcal{S}(\Bbb{R})$ of $h_\Bbb{R}$ in \eqref{generalized function}; we will establish that it differs from $\mathcal{M}_d(\Bbb{R})-\mathcal{S}(\Bbb{R})$
by an irreducible component which is relevant only in odd degrees. The subspace $\mathcal{M}'_d$ defined in \eqref{M'} is formed by the conjugacy classes of elements of ${{\rm{Rat}}_d}(\Bbb{C})$ for which $\Bbb{R}$ is a \textit{field of definition}. On the other hand, $\Bbb{R}$-points of the moduli space
$\mathcal{M}_d$ are  Möbius conjugacy classes of elements of
${{\rm{Rat}}_d}(\Bbb{C})$ whose \textit{field of moduli} is included in $\Bbb{R}$; see \cite[\S\S 4.4, 4.10]{MR2316407} for the background material. So 
$\mathcal{M}'_d\subseteq\mathcal{M}_d(\Bbb{R})$ and in particular 
$\mathcal{M}'_d-\mathcal{S}'\subseteq\mathcal{M}_d(\Bbb{R})-\mathcal{S}(\Bbb{R})$. The latter containment (and consequently the former) can indeed be strict: for $d=2k+1$ the automorphism group of the rational map 
$\phi(z):={\rm{i}}\left(\frac{z-1}{z+1}\right)^{2k+1}$ is trivial and its field of moduli is $\Bbb{Q}$ while it cannot be defined over the reals; see   \cite[Exercise 4.39, Example 4.85]{MR2316407}. It is worth noting that this question of ``FoM vs. FoD'' is relevant only when  $d$ is odd \cite[Theorem 4.92]{MR2316407}.  \\
\indent
Let us try to see how the real variety
$\mathcal{M}_d(\Bbb{R})-\mathcal{S}(\Bbb{R})$
is related to its (possibly proper) Zariski closed subset 
$\mathcal{M}'_d-\mathcal{S}(\Bbb{R})=\mathcal{M}'_d-\mathcal{S}'$.\footnote{It is clear (at least in the analytic topology) that the subset of points of $\mathcal{M}_d(\Bbb{C})$ corresponding to maps which admit a model over a prescribed closed subfield of $\Bbb{C}$ is closed.}
We claim that classes in the complement 
$\mathcal{M}_d(\Bbb{R})-\left(\mathcal{S}(\Bbb{R})\bigcup\mathcal{M}'_d\right)=\mathcal{M}_d(\Bbb{R})-\left(\mathcal{S}(\Bbb{C})\bigcup\mathcal{M}'_d\right)$ can be represented by \textit{antipodal} maps; i.e. 
maps which commute with the anti-holomorphic involution 
$\gamma(z):=-\frac{1}{\bar{z}}$. Notice that  the example $\phi(z)={\rm{i}}\left(\frac{z-1}{z+1}\right)^{d}$ for $d$ odd that appeared before  is indeed antipodal.   
For more on the dynamics of antipodal-preserving maps see \cite{2015arXiv151201850B}. They are not relevant to our treatment of entropy  as a generic map of this class does not preserve any circle.

\begin{proposition}\label{antipodal}
The real subvariety $\mathcal{M}'_d$ of $\mathcal{M}_d(\Bbb{C})$  coincides with the real locus $\mathcal{M}_d(\Bbb{R})$ of the moduli space for $d$ even while 
for  $d$ odd, the latter 
is reducible and has $\mathcal{M}'_d$ and 
the $(2d-2)$-dimensional real subvariety of conjugacy classes of 
antipodal-preserving maps as irreducible components. Any other irreducible  component of $\mathcal{M}_d(\Bbb{R})$ has to be contained in the symmetry locus 
$\mathcal{S}(\Bbb{R})$ and hence is of dimension at most $d-1$.
\end{proposition}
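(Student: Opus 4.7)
The plan is to use non-abelian Galois cohomology to classify points of $\mathcal{M}_d(\Bbb{R})$ away from the symmetry locus $\mathcal{S}(\Bbb{R})$. If $[f]\in\mathcal{M}_d(\Bbb{R})$, i.e.\ $[\bar f]=[f]$, then $\bar{f}=\alpha\circ f\circ\alpha^{-1}$ for some $\alpha\in{\rm{PGL}}_2(\Bbb{C})$; conjugating the equation once more yields $\bar\alpha\alpha\in{\rm{Aut}}(f)$, which outside $\mathcal{S}(\Bbb{R})$ becomes $\bar\alpha\alpha=1$. Hence $\sigma\mapsto\alpha$ is a $1$-cocycle representing a class in $H^1\left({\rm{Gal}}(\Bbb{C}/\Bbb{R}),{\rm{PGL}}_2(\Bbb{C})\right)$; changing the representative $f\to\beta f\beta^{-1}$ replaces $\alpha$ by the cohomologous cocycle $\bar\beta^{-1}\alpha\beta$, so the class is a well defined invariant of $[f]$.

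By the classical identification of $H^1\left({\rm{Gal}}(\Bbb{C}/\Bbb{R}),{\rm{PGL}}_2(\Bbb{C})\right)$ with the set of $\Bbb{R}$-forms of $\Bbb{P}^1$, this group has order two. The trivial class (coboundaries $\alpha=\bar\beta^{-1}\beta$) corresponds to real representatives $\beta f\beta^{-1}\in\Bbb{R}(z)$, i.e. to $\mathcal{M}'_d$. The nontrivial class is realized by $\alpha_0(z)=-1/z$, since the twisted anti-holomorphic involution $z\mapsto\alpha_0(\bar z)=-1/\bar z$ is the fixed-point-free antipodal $\gamma$; a direct unwinding shows $\bar f=\alpha_0\circ f\circ\alpha_0^{-1}\Longleftrightarrow f\circ\gamma=\gamma\circ f$. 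Thus outside $\mathcal{S}(\Bbb{R})$, $\mathcal{M}_d(\Bbb{R})$ is the disjoint union of $\mathcal{M}'_d$ and the locus of conjugacy classes of antipodal-preserving maps. The even/odd dichotomy then follows from counting fixed points: the fixed-point divisor of an antipodal degree-$d$ map on $\Bbb{P}^1(\Bbb{C})$ has total degree $d+1$ and is $\gamma$-invariant; since $\gamma$ is a fixed-point-free diffeomorphism preserving local multiplicities, every $\gamma$-orbit contributes an even amount, so $d+1$ must be even. Therefore no antipodal map exists for even $d$, and combined with \cite[Theorem 4.92]{MR2316407} (field of moduli equals field of definition over $\Bbb{R}$ for even degree, which takes care of $\mathcal{S}(\Bbb{R})$ as well) this gives $\mathcal{M}_d(\Bbb{R})=\mathcal{M}'_d$ in that case.

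For odd $d$ I will pin down the antipodal component by a parameter count: antipodal maps form a $(2d+1)$-real-dimensional submanifold of ${\rm{Rat}}_d(\Bbb{C})$ (a real form of the $(2d+1)$-complex-dimensional irreducible variety ${\rm{Rat}}_d(\Bbb{C})$), and the stabilizer subgroup $G_\gamma=\{\beta\in{\rm{PGL}}_2(\Bbb{C})\mid\beta\circ\gamma=\gamma\circ\beta\}$---itself a real form of ${\rm{PGL}}_2(\Bbb{C})$ isomorphic to the compact group ${\rm{SO}}(3)$---has real dimension $3$. Triviality of ${\rm{Aut}}(f)$ ensures $\gamma$ is the \emph{unique} anti-holomorphic symmetry of $f$, so any Möbius conjugacy between two antipodal maps lies in $G_\gamma$, making the antipodal component irreducible of real dimension $(2d+1)-3=2d-2$ and disjoint from $\mathcal{M}'_d$ outside $\mathcal{S}(\Bbb{R})$ by the cohomological dichotomy. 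Any remaining irreducible component of $\mathcal{M}_d(\Bbb{R})$ must then lie inside $\mathcal{S}(\Bbb{R})$ and, by Proposition \ref{codimension>1}, has dimension at most $d-1$. I expect the main obstacle to be the parity argument with its multiplicity bookkeeping---once phrased in terms of the $\gamma$-invariant fixed-point divisor it becomes clean---followed by the concrete translation of the nontrivial cohomology class into the commutation relation $f\gamma=\gamma f$.
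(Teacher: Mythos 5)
Your argument is correct and follows the paper's overall strategy closely: outside $\mathcal{S}(\Bbb{R})$ you form the same $1$-cocycle $\sigma\mapsto\alpha$ valued in ${\rm{PGL}}_2(\Bbb{C})$, invoke $H^1\left({\rm{Gal}}(\Bbb{C}/\Bbb{R}),{\rm{PGL}}_2(\Bbb{C})\right)\cong\Bbb{Z}/2\Bbb{Z}$, identify the nontrivial class with commutation with $\gamma(z)=-1/\bar z$, and finish with the same dimension count $(2d+1)-3=2d-2$ using the $3$-dimensional centralizer of $\gamma$ in ${\rm{PGL}}_2(\Bbb{C})$. The one genuinely different ingredient is how you get the parity constraint on $d$: you observe that the fixed-point divisor of $f$ has total degree $d+1$ and is invariant under the fixed-point-free involution $\gamma$ which preserves local multiplicities, so each $\gamma$-orbit contributes evenly and $d+1$ must be even. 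The paper instead normalizes an antipodal map to $u\prod_{i=1}^d\frac{z-q_i}{1+\bar q_i z}$ and checks by hand that $f\circ\gamma=\gamma\circ f$ forces $(-1)^d|u|^2=-1$, hence $d$ odd and $|u|=1$, with Borsuk's theorem and Silverman's Theorem 4.92 mentioned as alternative justifications in a footnote. Your divisor argument is cleaner and coordinate-free, buying you the parity conclusion without writing a normal form, at the cost of having to verify that an anti-holomorphic conjugacy preserves fixed-point multiplicities (which does hold, via $f(w)-w=\overline{f(z)-z}/\overline{z\,f(z)}$ under $w=\gamma(z)$, and needs a short separate check when the fixed point is $0$ or $\infty$). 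One small point worth making explicit in a final write-up: irreducibility of the antipodal component should be justified, as the paper does, by exhibiting it as the image of a connected (indeed irreducible) real algebraic set such as $S^1\times(\Bbb{C}^\times)^d$ under the projection to $\mathcal{M}_d(\Bbb{C})$; "being a real form of an irreducible complex variety" does not by itself give irreducibility of a real variety.
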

\begin{proof}
Invoking \cite[Proposition 4.86]{MR2316407}, the obstruction to the field of moduli $\Bbb{R}$ being a field of definition for a map $f\in{\rm{Rat}}_d(\Bbb{C})$ with ${\rm{Aut}}(f)=\{\mathbf{1}\}$ is encoded by the Galois cohomology class  determined by the cocycle ${\rm{Gal}}\left(\Bbb{C}/\Bbb{R}\right)\rightarrow {\rm{PGL}}_2(\Bbb{C})$ defined by $\sigma\mapsto \alpha$
 with $\sigma$  being the complex conjugation and $\alpha$ a Möbius transformation with 
$\alpha\circ f\circ\alpha^{-1}=\bar{f}$
that therefore satisfies the cocycle condition
$\alpha\circ\bar{\alpha}=1$. These transformations have to be considered modulo modification via a 
$1$-coboundary, i.e. replacing $\alpha$ with $u\circ\alpha\circ\bar{u}^{-1}$ that amounts to replacing $f$ with 
$\bar{u}\circ f\circ\bar{u}^{-1}$ from the same conjugacy class. But the group\footnote{Here
${\rm{Br}}(\Bbb{R})$ denotes the \textit{Brauer group} of the field of real numbers.} 
$$H^1\left({\rm{Gal}}\left(\Bbb{C}/\Bbb{R}\right), {\rm{PGL}}_2(\Bbb{C})\right)
\cong H^2\left({\rm{Gal}}\left(\Bbb{C}/\Bbb{R}\right), \Bbb{C}^\times\right)\cong{\rm{Br}}(\Bbb{R})$$
is of order two
and is thus generated by any $1$-cocycle non-cohomologous to a coboundary. An example of such is the  $1$-cocycle associated to the rational map 
$\phi(z)={\rm{i}}\left(\frac{z-1}{z+1}\right)^{2k+1}$
appeared before:
\small
$$-\frac{1}{\phi(\frac{-1}{z})}=-\frac{1}{{\rm{i}}\left(\frac{-\frac{1}{z}-1}{-\frac{1}{z}+1}\right)^{2k+1}}=-{\rm{i}}\left(\frac{z-1}{z+1}\right)^{2k+1}=\bar{\phi}(z);$$
\normalsize
that furnishes us with the $1$-cocycle determined by $\alpha(z)=-\frac{1}{z}$.
It is easy to verify that  this is not  $1$-coboundary; that is, not in the form of $u\circ\bar{u}^{-1}$ for another Möbius transformation; see \cite[p. 209]{MR2316407} for details. Hence 
$\mathcal{M}_d(\Bbb{R})-\left(\mathcal{S}(\Bbb{R})\bigcup\mathcal{M}'_d\right)$  is precisely the subset of classes in
$\mathcal{M}_d(\Bbb{R})-\mathcal{S}(\Bbb{R})$ which admit a representative $f\in\Bbb{C}(z)$ with $\bar{f}\left(-\frac{1}{z}\right)=-\frac{1}{{f(z)}}$. 
Applying the complex conjugation map to both sides, this condition  means that $f$ commutes with the anti-holomorphic involution 
$\gamma(z)=-\frac{1}{\bar{z}}$, a Zariski closed condition over the reals cutting out the \textit{antipodal-preserving} locus of the moduli space. Notice that the constraint automatically guarantees that the field of moduli is inside $\Bbb{R}$ as the complex conjugate map $\bar{f}$ is Möbius conjugate to $f$ via $z\mapsto -\frac{1}{z}$. But there might be such maps which cannot be defined over the reals; e.g., the example $\phi(z)={\rm{i}}\left(\frac{z-1}{z+1}\right)^{d}$ for $d$ odd. 
Consequently,  the real variety $\mathcal{M}_d(\Bbb{R})-\mathcal{S}(\Bbb{R})$ is the union of 
$\mathcal{M}'_d-\mathcal{S}(\Bbb{R})$ -- that as discussed before in \S\ref{Blaschke subsection} is irreducible and of  dimension $2d-2$ -- and the antipodal locus.  
\\
\indent
Here is a simple dimension count for the antipodal-preserving locus in $\mathcal{M}_d(\Bbb{C})$.  Picking a degree 
$d$ map 
$f$
which commutes with 
$\gamma:z\mapsto -\frac{1}{\bar{z}}$,
after a Möbius conjugation, without any loss of generality we may assume that the roots of $f$ lie in the finite plane. If $q$ is a root of $f$, 
$-\frac{1}{\bar{q}}$ has to be a pole. Thus roots and poles can be coupled in pairs such as 
$\left(q_i,-\frac{1}{\bar{q_i}}\right)$. We conclude that 
$f(z)$ is a scalar multiple of a function in the form of 
$\prod_{i=1}^d\frac{z-q_i}{1+\bar{q_i}z}$. Now $f\circ\gamma=\gamma\circ f$ is satisfied for a multiple of such a product if and only if $d$ is odd and the scalar factor is of norm one.\footnote{As a matter of fact,  only odd degrees are relevant here since a classical theorem of Borsuk states that an antipodal-preserving map $S^n\rightarrow S^n$ must be of odd degree. Aside from this topological obstruction, there is  an arithmetic obstruction due to a theorem of Silverman
 which states  that ``FoM=FoD'' whenever the degree is even \cite[Theorem 4.92]{MR2316407}.}
This argument indicates that any antipodal-preserving map of odd degree 
$d$, after a suitable conjugation, can be uniquely written as 
$$u\prod_{i=1}^d\frac{z-q_i}{1+\bar{q_i}z}$$  
where $|u|=1$ and $q_i$'s are complex numbers with $q_i\bar{q_j}\neq -1$. 
This parametrizes 
a connected subspace of real dimension $2d+1$ 
of
${\rm{Rat}}_d(\Bbb{C})$
that projects onto the antipodal-preserving locus in $\mathcal{M}_d(\Bbb{C})$. Consequently,  like $\mathcal{M}'_d$, the antipodal locus is irreducible as well, being a surjective image of the irreducible real algebraic subset $S^1\times (\Bbb{C}^\times)^{d}$ of $\Bbb{A}^{2d+1}(\Bbb{R})$.
Let us find the dimension of the intersection of the whole conjugacy class of a generic map of this kind with the antipodal-preserving locus in
${\rm{Rat}}_d(\Bbb{C})$.
Given an antipodal map 
$f$
with 
${\rm{Aut}}(f)=\{\mathbf{1}\}$,
suppose for an $\alpha\in{\rm{PGL}}_2(\Bbb{C})$ the map $\alpha\circ f\circ\alpha^{-1}$ is antipodal too.
Then one can write:
$$\gamma\circ\left(\alpha\circ f\circ\alpha^{-1}\right)\circ\gamma^{-1}=\alpha\circ f\circ\alpha^{-1}
=\alpha\circ \left(\gamma\circ f\circ\gamma^{-1}\right)\alpha^{-1},$$
which indicates that the Möbius transformation $(\alpha\circ\gamma)^{-1}\circ(\gamma\circ\alpha)$ is an automorphism of 
$f$ and thus $\alpha$ commutes with $\gamma$. It is not hard to verify that any 
Möbius transformation commuting with $z\mapsto -\frac{1}{\bar{z}}$ can be uniquely written 
either as 
$z\mapsto \frac{z+a{\rm{e}}^{2\pi{\rm{i}}r}}{a{\rm{e}}^{2\pi{\rm{i}}s}z-{\rm{e}}^{2\pi{\rm{i}}(r+s)}}$
with 
$r,s\in\Bbb{R}/\Bbb{Z}$, $a>0$
or in one of forms $z\mapsto vz$, $z\mapsto \frac{v}{z}$ where $v$ lies on the unit circle. We conclude that the space of 
Möbius transformation commuting with the antipodal involution is of  real dimension three and thus the antipodal locus in 
$\mathcal{M}_d(\Bbb{C})$ is of dimension  $(2d+1)-3=2d-2$. 
\end{proof}
\noindent
For more on $\mathcal{M}_d(\Bbb{R})$, see \cite{2015arXiv150205306H}.

\section{Rigidity of real entropy}\label{proof section}

Equipped with the definition of the function $h_\Bbb{R}:\mathcal{M}'_d-\mathcal{S}'\rightarrow [0,\log(d)]$
from \S\ref{setup}, we prove Theorem \ref{temp1} in this section. 
Away from the antipodal and symmetry loci, $\mathcal{M}'_d$ can be thought of as the set of fixed points of the involution $\langle f\rangle\mapsto \langle\bar{f}\rangle$
induced by conjugating coefficients in ${\rm{Rat}}_d(\Bbb{C})$.
For future references, we record this involution as acting not only on rational maps but on functions defined on the Riemann sphere:
\begin{definition}\label{involution-def}
For any function  $h:\hat{\Bbb{C}}\rightarrow\Bbb{C}\cup\{\infty\}$,
the function $\tilde{h}:\hat{\Bbb{C}}\rightarrow\Bbb{C}\cup\{\infty\}$
is defined as:
\begin{equation}\label{involution}
\tilde{h}:z\mapsto \overline{h(\bar{z})}.
\end{equation}
\end{definition}
It is easy to check that $h\mapsto\tilde{h}$ is an involution which respects the ring structure of the set of $\Bbb{C}$-valued functions on the Riemann sphere;
takes homeomorphisms to homeomorphisms and finally, it commutes with differential operators $\frac{\partial}{\partial z}$
and $\frac{\partial}{\partial\bar{z}}$:
\begin{equation}\label{differentiation}
\frac{\partial\tilde{h}}{\partial z}=\widetilde{\frac{\partial h}{\partial z}},\quad
\frac{\partial\tilde{h}}{\partial\bar{z}}=\widetilde{\frac{\partial h}{\partial\bar{z}}}.
\end{equation} 

We now briefly review the theory of quasi-conformal  deformations of rational maps developed in 
\cite{MR1620850}. A concise treatment could be found in one of the additional chapters of  \cite{MR2241787}.
Let $f$ be an arbitrary real rational function of degree $d$. The quasi-conformal (q.c.) conjugacy class of $f$, denoted by ${\rm{qc}}(f)$,  consists of those rational maps $g$ which are quasi-conformally conjugate to $f$. The space 
$${\rm{M}}(f):={\rm{qc}}(f)\Big/\text{ Möbius Equivalence}$$  
of Möbius conjugacy classes of maps in ${\rm{qc}}(f)$ is called the \textit{moduli space} of the rational map $f$ and 
the natural map ${\rm{M}}(f)\hookrightarrow\mathcal{M}_d(\Bbb{C})$ is an injection of complex orbifolds.
 In analogy with the theory of the moduli spaces of Riemann surfaces, 
the moduli space  ${\rm{qc}}(f)$ is the quotient of a \textit{Teichmüller space} under a discrete group action:
\begin{equation}\label{teich}
{\rm{T}}(f)=\left\{(g,h)\,|\, g \text{ a rational map}, h \text{ a q.c.-homeomorphism with } h\circ f=g\circ h\right\}\Big/\sim
\end{equation}
where $(g_1,h_1)\sim (g_2,h_2)$ if in the conjugacy 
$$g_2=\left(h_2\circ h_1^{-1}\right)\circ g_1\circ\left(h_2\circ h_1^{-1}\right)^{-1}$$ 
the quasi-conformal homeomorphism $h_2\circ h_1^{-1}$ is isotopic to a Möbius transformation through an isotopy which preserves the conjugacy. In particular, $g_1,g_2$ must be Möbius conjugate. 
There is 
an obvious map 
$$
\begin{cases}
{\rm{T}}(f)\rightarrow{\rm{M}}(f)={\rm{qc}}(f)\Big/\text{ Möbius Equivalence}\\
[(g,h)]\mapsto \langle g\rangle
\end{cases}$$
sending the class $[(g,h)]$ of a pair to the class $\langle g\rangle$. The fiber above the Möbius class of $f$ can be identified with the group of isotopy classes of q.c.-homeomorphisms commuting with $f$ where the isotopy has to remain within this space of q.c.-automorphisms as well. The aforementioned group is called the \textit{modular group} of the rational map $f$ and will be denoted by ${\rm{Mod}}(f)$. Clearly,  ${\rm{Mod}}(f)$ acts (from right) on ${\rm{T}}(f)$ by $[(g,h)].[k]=[(g,h\circ k)]$ and the quotient 
${\rm{T}}(f)\big/{\rm{Mod}}(f)$ can be identified with ${\rm{M}}(f)$ via the projection above. It is known that 
${\rm{Mod}}(f)$ is discrete and acts properly discontinuously on ${\rm{T}}(f)$. Furthermore, there is a description of the space ${\rm{T}}(f)$ as a product of ordinary Teichmüller spaces  based on the dynamics of 
$f:\hat{\Bbb{C}}\rightarrow\hat{\Bbb{C}}$; see
\cite[Theorems 2.2 \& 2.3]{MR1620850}.\\
\indent 
There is a naturally defined entropy function $\widetilde{h_\Bbb{R}}$ on some appropriate subset of ${\rm{T}}(f)$
sending a class $[(g,h)]$ with ${\rm{Aut}}(g)=\{\mathbf{1}\}$ to $h_\Bbb{R}(g)$. This is well defined and fits in the commutative diagram below:
\begin{equation}\label{lift}
\xymatrix{\left\{[(g,h)]\in{\rm{T}}(f)\,|\, g\in\Bbb{R}(z), {\rm{Aut}}(g)=\{\mathbf{1}\}\right\}\ar[d]_{[(g,h)]\mapsto\langle g\rangle} \ar[rd]^{\widetilde{h_\Bbb{R}}} & \\
\mathcal{M}'_d\bigcap{\rm{M}}(f) -\mathcal{S}'\ar[r]_{h_\Bbb{R}} & \left[0,\log(d)\right] }
\end{equation}

Suppose $g$ is another real rational map  quasi-conformally conjugate to $f$; that is, $(g,h)$ determines a class in 
${\rm{T}}(f)$ for a suitable q.c.-homeomorphisms $h$ satisfying $h\circ f=g\circ h$. In order to compare 
the topological entropies of $f\restriction_{\hat{\Bbb{R}}}$ and $g\restriction_{\hat{\Bbb{R}}}$, 
we need to investigate how the  quasi-circle $h(\hat{\Bbb{R}})$ is placed with respect to $\hat{\Bbb{R}}$. Pulling back the complex conjugate map $z\mapsto\bar{z}$ via $h$ yields a reflection $z\mapsto h^{-1}\left(\overline{h(z)}\right)$ commuting with $f$. This differs from the usual reflection $z\mapsto\bar{z}$ -- that also preserves $f\in\Bbb{R}(z)$ -- by a q.c. automorphism of $f$:
\begin{equation}\label{difference}
u(z):=\overline{h^{-1}\left(\overline{h(z)}\right)};
\end{equation}
a map which satisfies $u\left(\overline{u(\bar{z})}\right)=z$ or equivalently, using the notation in \eqref{involution}
\begin{equation}\label{functional equation}
u^{-1}=\tilde{u}
\end{equation}
holds.\footnote{The obstruction to a q.c.-automorphism $u$ of $f$ satisfying the functional equation \eqref{functional equation} being in the form of \eqref{difference} for some appropriate q.c.-automorphism $h$ is encoded by the first cohomology of the group $\Bbb{Z}/2\Bbb{Z}$ with coefficients in the group of q.c.-automorphisms of 
$f$ on which $\Bbb{Z}/2\Bbb{Z}$ acts by the involution \eqref{involution}.}
The homeomorphism $u$ is identity if and only if $\overline{h(\bar{z})}=z$ which in particular indicates that $h$ preserves $\hat{\Bbb{R}}$. Next, we form a continuous map that assigns to a $[(g,h)]\in{\rm{T}}(f)$ (with $g\in\Bbb{R}(z)$)  the class in ${\rm{Mod}}(f)$ of the corresponding automorphism $u$ from \eqref{difference}. The appropriate domain of definition for such a map turns out to be that of the function $\widetilde{h_\Bbb{R}}$ from \eqref{lift}: 
\begin{proposition}
The map
\begin{equation}\label{the map}
\begin{cases}
\left\{[(g,h)]\in{\rm{T}}(f)\,|\, g\in\Bbb{R}(z), {\rm{Aut}}(g)=\{\mathbf{1}\}\right\}\to {\rm{Mod}}(f)\\
[(g,h)]\mapsto \left[u:z\mapsto \overline{h^{-1}\left(\overline{h(z)}\right)}\right]
\end{cases}
\end{equation}
is well defined.
\end{proposition}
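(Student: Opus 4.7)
The plan is to verify two independent claims: that $u(z) = \overline{h^{-1}(\overline{h(z)})}$ belongs to the group of q.c.-automorphisms of $f$, so that $[u]$ makes sense in ${\rm{Mod}}(f)$; and that this class is independent of the choice of representative $(g,h)$ of the equivalence class in ${\rm{T}}(f)$.

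First, rewriting via the involution \eqref{involution}, one has $u = \widetilde{h^{-1}}\circ h$. This is manifestly a q.c.-homeomorphism, since q.c.-regularity is preserved under composition, inversion, and the anti-holomorphic conjugation $\widetilde{(\cdot)}$. To see that $u$ commutes with $f$, apply the involution to the conjugacy $h\circ f = g\circ h$; using $\tilde f = f$ and $\tilde g = g$ (because $f,g\in\Bbb{R}(z)$), this yields $\widetilde h\circ f = g\circ\widetilde h$, or equivalently $\widetilde{h^{-1}}\circ g = f\circ \widetilde{h^{-1}}$. Composing with the original conjugacy then gives
$$u\circ f \;=\; \widetilde{h^{-1}}\circ h\circ f \;=\; \widetilde{h^{-1}}\circ g\circ h \;=\; f\circ \widetilde{h^{-1}}\circ h \;=\; f\circ u.$$
Thus $u$ is a q.c.-automorphism of $f$ and $[u]\in {\rm{Mod}}(f)$ is defined.

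For the independence claim, I would first handle the case where $g$ is fixed. Suppose $(g,h_1)\sim (g,h_2)$, so $v:=h_2\circ h_1^{-1}$ is a q.c.-automorphism of $g$ isotopic to some element of ${\rm{Aut}}(g) = \{1\}$ through q.c.-automorphisms $\{v_t\}_{t\in[0,1]}$ of $g$ with $v_0 = {\rm{id}}$ and $v_1 = v$. Writing $h_2 = v\circ h_1$ gives $u_2 = \widetilde{h_1^{-1}}\circ \widetilde{v^{-1}}\circ v\circ h_1$, and the path $t\mapsto U_t := \widetilde{h_1^{-1}}\circ \widetilde{v_t^{-1}}\circ v_t\circ h_1$ joins $u_1$ to $u_2$ through q.c.-homeomorphisms. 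To check each $U_t$ commutes with $f$, conjugate by $\widetilde{h_1^{-1}}\circ (\cdot)\circ h_1$ and use $\widetilde{h_1^{-1}}\circ g = f\circ \widetilde{h_1^{-1}}$: the task reduces to verifying that $\widetilde{v_t^{-1}}\circ v_t$ commutes with $g$, which follows from applying $\widetilde{(\cdot)}$ to $v_t\circ g = g\circ v_t$ and invoking $\tilde g = g$. Hence $[u_1] = [u_2]$ in ${\rm{Mod}}(f)$.

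For the general case, if $(g_1,h_1)\sim (g_2,h_2)$ with $g_1,g_2\in\Bbb{R}(z)$ and ${\rm{Aut}}(g_1) = \{1\}$, pick a Möbius map $M$ with $M\circ g_1\circ M^{-1} = g_2$. Conjugating both sides yields $\bar M\circ g_1\circ \bar M^{-1} = g_2$, so $M\circ \bar M^{-1}\in {\rm{Aut}}(g_2) = \{1\}$, forcing $M\in {\rm{PGL}}_2(\Bbb{R})$ and in particular $\tilde M = M$. The pair $(g_1, M^{-1}\circ h_2)$ lies in the class $[(g_1,h_1)]$, and its associated $u$-map is $\widetilde{h_2^{-1}}\circ \tilde M\circ M^{-1}\circ h_2 = \widetilde{h_2^{-1}}\circ h_2 = u_2$, so reducing $g_2$ to $g_1$ does not alter the $u$-map. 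Combined with the previous paragraph this gives $[u_1] = [u_2]$. I expect this last step to be the main obstacle: bridging equivalences that move $g$ to equivalences that fix $g$ hinges on ${\rm{Aut}}(g) = \{1\}$ forcing the conjugating Möbius map to be real, and this is precisely the twist phenomenon of Example \ref{symmetry locus entropy} and Proposition \ref{symmetry locus} that necessitated excising $\mathcal{S}'$ in the first place.
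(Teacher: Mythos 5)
Your proof is correct, but it is structured differently from the paper's. The paper does everything in a single stroke: it takes the isotopy $\{H_t\}_{t\in[0,1]}$ that defines the equivalence $(g_1,h_1)\sim(g_2,h_2)$, forms the path $(t,z)\mapsto\overline{(H_t\circ h_1)^{-1}\bigl(\overline{H_t\circ h_1(z)}\bigr)}$, verifies by one calculation that this is an isotopy through q.c.-automorphisms of $f$, and reads off that its two ends are $u_2$ (at $t=0$, where $H_0\circ h_1 = h_2$) and $u_1$ (at $t=1$, where $H_1$ is a Möbius map that is forced to be real by the same ${\rm Aut}=\{1\}$ cocycle argument). You instead split the work into three steps: (i) you check separately that $u$ is a q.c.-automorphism of $f$, using the algebraically convenient identity $u=\widetilde{h^{-1}}\circ h$ and the fact that $\tilde f = f$, $\tilde g = g$; (ii) you reduce the general equivalence to the case $g_1=g_2$ by replacing $(g_2,h_2)$ with $(g_1, M^{-1}\circ h_2)$ for a real Möbius map $M$ (with reality forced exactly as in Proposition \ref{symmetry locus}), observing that $\tilde M = M$ makes the $u$-map unchanged; and (iii) for a fixed $g$, you build the isotopy $U_t = \widetilde{h_1^{-1}}\circ\widetilde{v_t^{-1}}\circ v_t\circ h_1$ from the isotopy $\{v_t\}$ of q.c.-automorphisms of $g$ connecting $h_2\circ h_1^{-1}$ to the identity, rather than from the $\{H_t\}$ isotopy of conjugacies $g_1\to g_2$. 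Your systematic use of the involution $\widetilde{(\cdot)}$ (which the paper introduces in Definition \ref{involution-def} but uses less uniformly in this proof) makes the computations cleaner and exposes why the various commutation relations hold; the reduction step (ii) is the modular way to deploy Proposition \ref{symmetry locus}, whereas the paper invokes it only at the final endpoint $H_1$. Both are valid; yours trades the paper's single unified isotopy for a more transparent factorization of the argument.
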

\begin{proof}
Pick two equivalent pairs 
$(g_1,h_1)$ and $(g_2,h_2)$ with $g_1,g_2\in\Bbb{R}(z)$.  So there is an isotopy 
$\left\{H_t:\hat{\Bbb{C}}\rightarrow\hat{\Bbb{C}}\right\}_{t\in [0,1]}$ with $H_t\circ g_1=g_2\circ H_t$, $H_1$ being a Möbius transformation and $H_0=h_2\circ h_1^{-1}$.
\begin{equation*}
\xymatrix{\hat{\Bbb{C}} \ar[r]^f\ar[d]^{h_1} & \hat{\Bbb{C}}\ar[d]^{h_1}\\
\hat{\Bbb{C}}\ar[r]^{g_1} & \hat{\Bbb{C}}}
\quad\quad
\xymatrix{\hat{\Bbb{C}} \ar[r]^f\ar[d]^{h_2} & \hat{\Bbb{C}}\ar[d]^{h_2}\\
\hat{\Bbb{C}}\ar[r]^{g_2} & \hat{\Bbb{C}}}
\quad\quad
\xymatrix{\hat{\Bbb{C}} \ar[r]^{g_1}\ar[d]^{H_t} & \hat{\Bbb{C}}\ar[d]^{H_t}\\
\hat{\Bbb{C}}\ar[r]^{g_2} & \hat{\Bbb{C}}}
\end{equation*}
\noindent
Then     
\begin{equation}\label{isotopy}
(t,z)\mapsto\overline{\left(H_t\circ h_1\right)^{-1}\left(\overline{H_t\circ h_1(z)}\right)}\quad (t\in [0,1], z\in\hat{\Bbb{C}})
\end{equation}
is an isotopy through quasi-conformal automorphisms of $f$  
\small
\begin{equation*}
\begin{split}
&f\left(\overline{\left(H_t\circ h_1\right)^{-1}\left(\overline{H_t\circ h_1(z)}\right)}\right)
=\overline{\left(f\circ h_1^{-1}\circ H_t^{-1}\right)\left(\overline{H_t\circ h_1(z)}\right)}
=\overline{\left(\underbrace{\underbrace{f\circ h_1^{-1}}_{=h_1^{-1}\circ g_1}\circ H_t^{-1}}_{=h_1^{-1}\circ H_t^{-1}\circ g_2}\right)\left(\overline{H_t\circ h_1(z)}\right)}\\
&=\overline{\left( h_1^{-1}\circ H_t^{-1}\right)\left(\overline{\underbrace{\underbrace{g_2\circ H_t}_{=H_t\circ g_1}\circ h_1}_{=H_t\circ h_1\circ f}(z)}\right)}
=\overline{\left(H_t\circ h_1\right)^{-1}\left(\overline{H_t\circ h_1\left(f(z)\right)}\right)};
\end{split}
\end{equation*}
\normalsize
that furthermore starts from $u_2:z\mapsto\overline{h_2^{-1}\left(\overline{h_2(z)}\right)}$ and ends with 
$\overline{\left(H_1\circ h_1\right)^{-1}\left(\overline{H_1\circ h_1(z)}\right)};$
a quasi-conformal homeomorphism that we claim coincides with $u_1:z\mapsto\overline{h_1^{-1}\left(\overline{h_1(z)}\right)}$. This holds if the Möbius map $H_1$ lies in ${\rm{PGL}}_2(\Bbb{R})$. But we have a Möbius conjugacy 
$g_2=H_1\circ g_1\circ H_1^{-1}$
between to real maps $g_1,g_2\in\Bbb{R}(z)$ and, by Proposition \ref{symmetry locus}, the failure of  $H_1$ to be real amounts to ${\rm{Aut}}(g_1)\cong {\rm{Aut}}(g_2)$ being non-trivial; a possibility that has been ruled out. 
\end{proof}
The map \eqref{the map} is obviously continuous, but attains its values in the discrete group ${\rm{Mod}}(f)$, so must be constant on each connected components of the domain. 
\begin{proposition}\label{technical}
If  $[(g_1,h_1)], [(g_2,h_2)]$ are mapped to the same element of the modular group via \eqref{the map}, then $h_\Bbb{R}(g_1)$ and $h_\Bbb{R}(g_2)$  must coincide.
\end{proposition}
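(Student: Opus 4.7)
My plan is to upgrade the hypothesis -- which asserts only that $u_1$ and $u_2$ are isotopic through q.c.-automorphisms of $f$ -- to the pointwise equality $u_1=u_2$. This is to be achieved by replacing $h_2$ with $H\circ h_2$ for a suitable q.c.-automorphism $H$ of $g_2$ lying in the identity component of the group of q.c.-automorphisms of $g_2$ (so that the Teichmüller class $[(g_2,h_2)]$ is preserved). Once $u_1=u_2$ holds, the anti-q.c.\ involutions $\sigma_i:z\mapsto h_i^{-1}(\overline{h_i(z)})$ -- rewritable as $u_i^{-1}$ composed with complex conjugation -- coincide; hence their fixed loci $h_1^{-1}(\hat{\Bbb{R}})$ and $h_2^{-1}(\hat{\Bbb{R}})$ are a common quasi-circle $\Gamma\subset\hat{\Bbb{C}}$. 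Both $h_1$ and $h_2$ restrict to homeomorphisms $\Gamma\to\hat{\Bbb{R}}$, so the q.c.-conjugacy $\phi:=h_2\circ h_1^{-1}$ restricts to a homeomorphism $\hat{\Bbb{R}}\to\hat{\Bbb{R}}$ intertwining $g_1\restriction_{\hat{\Bbb{R}}}$ with $g_2\restriction_{\hat{\Bbb{R}}}$, and topological invariance of entropy under this conjugation yields $h_\Bbb{R}(g_1)=h_\Bbb{R}(g_2)$.

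To perform the upgrade, I introduce the defect $\eta_2:=\phi\circ\tilde\phi^{-1}$, using the involution of Definition \ref{involution-def}. A short manipulation with $\phi\circ g_1=g_2\circ\phi$ and the reality $\tilde g_i=g_i$ confirms that $\eta_2$ commutes with $g_2$, while $\tilde{\tilde\phi}=\phi$ forces the cocycle identity $\eta_2\circ\tilde\eta_2=\operatorname{id}$. Next, transporting the ambient isotopy $u_2\circ u_1^{-1}\sim\operatorname{id}$ (provided by the hypothesis $[u_1]=[u_2]$ in ${\rm{Mod}}(f)$) successively through the conjugacies $h_1$ and then $\phi$ realises $\eta_2$ as a q.c.-automorphism of $g_2$ isotopic to the identity through q.c.-automorphisms of $g_2$. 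A direct computation with the formula for how $u_2$ transforms under $h_2\rightsquigarrow H\circ h_2$ then shows that the modification achieves $u_2^{\mathrm{new}}=u_1$ precisely when $\tilde H\circ H^{-1}=\eta_2$.

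The main obstacle is the construction of such an $H$ in the identity component; equivalently, the statement that the near-identity cocycle $\eta_2$ is a coboundary in the Galois cohomology $H^1\bigl(\operatorname{Gal}(\Bbb{C}/\Bbb{R}),\,\text{q.c.-Aut}(g_2)\bigr)$ for the involution $\psi\mapsto\tilde\psi$. I plan to resolve this by passing to the Beltrami-differential parameterisation of the identity component of the group of q.c.-automorphisms of $g_2$: after a three-point normalisation, this component is parameterised by the open unit ball of $g_2$-invariant $L^\infty$-Beltrami coefficients on $\hat{\Bbb{C}}$, on which the involution $\psi\mapsto\tilde\psi$ descends to the $\Bbb{R}$-linear operation $\mu\mapsto\tilde\mu$. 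In this linearised picture the coboundary equation becomes a trivially solvable ``halving'' problem, and integrating along the given isotopy from the identity to $\eta_2$, followed by applying the Measurable Riemann Mapping Theorem, produces the desired $H$ and completes the argument.
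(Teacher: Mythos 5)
Your strategy is genuinely different from the paper's, and more ambitious: you aim to make the anti-holomorphic involutions $\sigma_1,\sigma_2$ coincide \emph{globally}, so that the two $f$-invariant quasi-circles $h_1^{-1}(\hat{\Bbb{R}})$ and $h_2^{-1}(\hat{\Bbb{R}})$ are literally equal. The paper asks for far less. Its proof takes the isotopy $v_t$ from $v=u_1^{-1}\circ u_2$ to the identity, observes that for a fixed $n$ the set of points of period dividing $n$ is finite, so the continuous path $t\mapsto v_t(z_0)$ through that finite set is constant for each periodic $z_0$, and hence $v$ fixes every repelling periodic point. Since those are dense in $\mathcal{J}(f)$, $v$ is the identity on $\mathcal{J}(f)$. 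That already forces $\mathcal{J}(f)\cap h_1^{-1}(\hat{\Bbb{R}})=\mathcal{J}(f)\cap h_2^{-1}(\hat{\Bbb{R}})$, and since the dynamics on an invariant quasi-circle off the Julia set contributes zero entropy, the two real entropies coincide. No cohomological triviality is needed.

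The gap in your route lies exactly where you flag the ``main obstacle.'' You assert that after a three-point normalization the identity component of the q.c.-automorphism group of $g_2$ is parameterized by the full unit ball of $g_2$-invariant Beltrami coefficients. That is not correct. Given a $g_2$-invariant $\mu$ and the normalized solution $w^\mu$ of the Beltrami equation, $w^\mu\circ g_2\circ (w^\mu)^{-1}$ is a holomorphic rational map, but it is generically a \emph{different} rational map $g_\mu$, not $g_2$ itself: this is precisely the mechanism that produces the Teichm\"uller space ${\rm T}(g_2)$. Only those invariant Beltrami coefficients whose associated class in ${\rm T}(g_2)$ is the basepoint give rise to automorphisms, and they form a proper (and in general complicated) subset of the ball. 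So the proposed linearization does not land in a simple linear model. Even setting that aside, the coboundary equation $\eta_2=H^{-1}\circ\tilde H$ is multiplicative in a non-abelian group, while the Beltrami correspondence is not a group homomorphism; the ``halving'' that is trivial for an $\Bbb{R}$-linear involution on a vector space does not transfer to the group problem except infinitesimally. Making the integration-along-isotopy idea rigorous would require first replacing the given isotopy $\eta^{(t)}$ by one compatible with the cocycle relation $\eta^{(t)}\circ\widetilde{\eta^{(t)}}=1$ at every $t$, then solving an ODE in a Fr\'echet--Lie setting; none of that is ``trivially solvable.'' In short, you have reduced the proposition to a nonabelian $H^1$-vanishing statement which you have not established, whereas the paper's countability/discreteness argument about periodic points avoids the question entirely by working only on the Julia set, which is all that topological entropy sees.
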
 
Before giving the proof, notice that if the automorphisms 
\begin{equation}\label{auxiliary1}
u_1:z\mapsto\overline{h_1^{-1}\left(\overline{h_1(z)}\right)},\quad 
u_2:z\mapsto\overline{h_2^{-1}\left(\overline{h_2(z)}\right)}
\end{equation}
corresponding to $(g_1,h_1)$ and $(g_2,h_2)$ determine same classes in ${\rm{Mod}}(f)$, they must differ by a third quasi-conformal automorphism $v$ of $f$ isotopic to the identity: $u_2=u_1\circ v$. 
If $v\restriction_{h_1^{-1}(\hat{\Bbb{R}})}$  is identity, then 
$$\forall z\in h_1^{-1}(\hat{\Bbb{R}}):\overline{h_1^{-1}\left(\overline{h_1(z)}\right)}=\overline{h_2^{-1}\left(\overline{h_2(z)}\right)};$$
  which by setting $x$ to be $h_1(z)$ implies 
$$\forall x\in\hat{\Bbb{R}}: h_2\circ h_1^{-1}(x)=\overline{h_2\circ h_1^{-1}(x)};$$
meaning that the conjugacy $h_2\circ h_1^{-1}$ 
between $g_1,g_2$ preserves $\hat{\Bbb{R}}$ and restricts to a conjugacy between 
$g_1\restriction_{\hat{\Bbb{R}}},g_2\restriction_{\hat{\Bbb{R}}}$ which are thus of the same entropy.  
Pulling back via $h_1$ and $h_2$, this can also be stated as systems 
$f=h_1^{-1}\circ g_1\circ h_1\restriction_{h_1^{-1}(\hat{\Bbb{R}})}$
and 
$f=h_2^{-1}\circ g_2\circ h_2\restriction_{h_2^{-1}(\hat{\Bbb{R}})}$
being of the same entropy. 
The general case is more complicated; there is an isotopy $\{v_t\}_{t\in [0,1]}$ 
from $v_0=v$ to $v_1=\mathbf{1}$
through q.c.-automorphisms of $f$. Thus 
$\left\{v_t\left( h_1^{-1}(\hat{\Bbb{R}})\right)\right\}_{t\in [0,1]}$
 is a $1$-parameter family of  $f$-invariant quasi-circles  terminating at $t=1$ with 
 $h_1^{-1}(\hat{\Bbb{R}})$. 
Asking for
$v_t$'s
to restrict to identity on the quasi-circle $h_1^{-1}(\hat{\Bbb{R}})$ is too much as there might be some open intervals  of Fatou points  that can be wiggled within the corresponding Fatou component; think about the natural foliation of a rotation domain.  Nevertheless, it would be sufficient if  the isotopy fixes
the ``important'' part of the dynamics that determines the entropy, namely the intersection of the quasi-circle 
$h_1^{-1}(\hat{\Bbb{R}})$ with the Julia set of $f$.\footnote{The  non-wandering set of  $f:\hat{\Bbb{C}}\rightarrow\hat{\Bbb{C}}$ is the disjoint union  of the Julia set with all (finitely many) cycles of rotation domains and (finitely many) attracting periodic points; cf. 
\cite[Problem 19-a]{MR2193309}. But the entropy vanishes restricted to the latter two. Hence, for any closed $f$-invariant subset 
$A$ of the Riemann sphere, the topological entropy of $f\restriction_A$ coincides with that of the subsystem
$f\restriction_{\mathcal{J}(f)\cap A}$ as any point outside it is either wandering or in a zero entropy closed subsystem.}

\begin{proof}[Proof of Proposition \ref{technical}]
{By symmetry, it suffices to argue that the topological entropy of the system 
$g_1=h_1\circ f\circ h_1^{-1}\restriction_{\hat{\Bbb{R}}}$ cannot exceed that of the system 
$g_2=h_2\circ f\circ h_2^{-1}\restriction_{\hat{\Bbb{R}}}$. But these systems are conjugate with 
$f\restriction_{h_1^{-1}(\hat{\Bbb{R}})}$ and $f\restriction_{h_2^{-1}(\hat{\Bbb{R}})}$ respectively (keep in mind that the quasi-circles $h_1^{-1}(\hat{\Bbb{R}})$,  $h_2^{-1}(\hat{\Bbb{R}})$
are $f$-invariant as the rational maps $g_1=h_1\circ f\circ h_1^{-1}$, $g_2=h_2\circ f\circ h_2^{-1}$
are with real coefficients.). Therefore, we only need to show that 
$h_{\rm{top}}\left(f\restriction_{h_1^{-1}(\hat{\Bbb{R}})}\right)\leq h_{\rm{top}}\left(f\restriction_{h_2^{-1}(\hat{\Bbb{R}})}\right).$\\
\indent
Given a periodic point $z_0$ of $f$, $\{v_t(z_0)\}_{t\in[0,1]}$ is a curve of periodic points of $f$ since $v_t$'s commute with $f$. The set of periodic points of $f$ is countable; therefore, all of these points coincide with $v_1(z_0)=x$. Consequently, $v_0=v=u_1^{-1}\circ u_2$ fixes each periodic point of $f$ and thus restricts to identity on the Julia set of $f$ since $\mathcal{J}(f)$ is the closure of repelling periodic points. 
Recalling the definitions of $u_1,u_2$ in \eqref{auxiliary1}, this indicates
$h_1^{-1}\left(\overline{h_1(x)}\right)=h_2^{-1}\left(\overline{h_2(x)}\right)$
for any $x\in\mathcal{J}(f)\cap h_1^{-1}(\hat{\Bbb{R}})$.
The left-hand side is just $x$ and hence  $h_2(x)$ should be real; thus $\mathcal{J}(f)\cap h_1^{-1}(\hat{\Bbb{R}})$ is a closed subsystem of $f\restriction_{h_2^{-1}(\hat{\Bbb{R}})}$ as well and then:  
$$h_{\rm{top}}\left(f\restriction_{h_1^{-1}(\hat{\Bbb{R}})}\right)
=h_{\rm{top}}\left(f\restriction_{\mathcal{J}(f)\cap h_1^{-1}(\hat{\Bbb{R}})}\right)
\leq h_{\rm{top}}\left(f\restriction_{h_2^{-1}(\hat{\Bbb{R}})}\right).$$
}
\end{proof}

The same idea of Schwarz reflection appearing above can be employed to establish the $\mathcal{J}$-stable case as well. 
\begin{proof}[Proof of Proposition \ref{temp1'}]
One characterization of $\mathcal{J}$-stability is the existence of a holomorphic motion of Julia sets 
\cite[Theorem B]{MR732343}. Consequently, there is a continuous map 
\begin{equation}\label{auxiliary2}
i:[0,1]\times\mathcal{J}(f_0)\rightarrow\hat{\Bbb{C}}
\end{equation}
such that each $i_t:=i(t,.)$ is a conjugacy
\begin{equation}\label{conjugacy}
\left(\mathcal{J}(f_0),f_0\restriction_{\mathcal{J}(f_0)}\right)\rightarrow
\left(\mathcal{J}(f_t),f_t\restriction_{\mathcal{J}(f_t)}\right)
\end{equation}
 with $i_0$ being identity. As every $f_t$ is with real coefficients, the Julia sets $\mathcal{J}(f_t)$ are invariant under the complex conjugation; and therefore, for any $x_0\in\mathcal{J}(f_0)$ the following is a well defined continuous map:
\begin{equation}\label{auxiliary3}
t\in[0,1]\mapsto \overline{i_t^{-1}\left(\overline{i_t(x_0)}\right)}.
\end{equation}
But if $x_0$ is picked to be periodic, e.g. $f_0^{\circ n}(x_0)=x_0$, then, as all the maps commute with the complex conjugation and $f_t\circ i_t=i_t\circ f_0$:
\begin{equation*}
\begin{split}
&f_0^{\circ n}\left(\overline{i_t^{-1}\left(\overline{i_t(x_0)}\right)}\right)
=\overline{f_0^{\circ n}\circ i_t^{-1}\left(\overline{i_t(x_0)}\right)}
=\overline{ i_t^{-1}\circ f_t^{\circ n}\left(\overline{i_t(x_0)}\right)}
=\overline{ i_t^{-1}\left(\overline{f_t^{\circ n}\circ i_t(x_0)}\right)}\\
&=\overline{ i_t^{-1}\left(\overline{i_t\left(f_0^{\circ n}(x_0)\right)}\right)}
=\overline{i_t^{-1}\left(\overline{i_t(x_0)}\right)};
\end{split}
\end{equation*}
which implies that \eqref{auxiliary2} parametrizes a curve passing through the periodic points of $f_0$ once 
$x_0\in\mathcal{J}(f_0)$ is periodic.  In such a situation, \eqref{auxiliary2} must be constant of value 
$$\overline{i_0^{-1}\left(\overline{i_0(x_0)}\right)}=x_0;$$
or equivalently $\overline{i_t(x_0)}=i_t(\overline{x_0})$ for any $t\in[0,1]$. But $\mathcal{J}(f_0)$ contains all repelling periodic points of $f_0$ as a dense subset. So by continuity, $\overline{i_t(x)}=i_t(\bar{x})$ holds for all 
$x\in\mathcal{J}(f_0)$, $t\in[0,1]$. In other words, the motion \eqref{auxiliary3} of Julia sets respects the complex conjugation; in particular, it preserves the real points.  The conjugacy \eqref{conjugacy} thus restricts to a conjugacy 
$$
\left(\mathcal{J}(f_0)\cap\hat{\Bbb{R}},f_0\restriction_{\mathcal{J}(f_0)\cap\hat{\Bbb{R}}}\right)\rightarrow
\left(\mathcal{J}(f_t)\cap\hat{\Bbb{R}},f_t\restriction_{\mathcal{J}(f_t)\cap\hat{\Bbb{R}}}\right)
$$
between two dynamical systems of topological entropies $h_\Bbb{R}(f_0)$ and $h_\Bbb{R}(f_t)$. 
\end{proof}

This discussion culminates in the following theorem:
\begin{theorem}\label{main}
Given a real rational map $f\in\Bbb{R}(z)$ of degree $d\geq 2$, the real entropy function 
$h_\Bbb{R}:\mathcal{M}'_d-\mathcal{S}'\rightarrow \left[0,\log(d)\right]$
is constant on connected components of  $\mathcal{M}'_d\bigcap{\rm{M}}(f)-\mathcal{S}'$. 
\end{theorem}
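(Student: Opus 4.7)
The plan is to combine the commutative diagram \eqref{lift}, the continuous map \eqref{the map} into the discrete modular group ${\rm{Mod}}(f)$, and Proposition \ref{technical}. All the heavy lifting has already been done: the domain in ${\rm{T}}(f)$ that fits in \eqref{lift} surjects onto $\mathcal{M}'_d \cap {\rm{M}}(f) - \mathcal{S}'$ via the quotient by the modular group action, and on this domain the map \eqref{the map} is defined and continuous.

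The first step is to fix two points $\langle g_1\rangle, \langle g_2\rangle$ lying in the same connected component $C$ of $\mathcal{M}'_d\cap{\rm{M}}(f)-\mathcal{S}'$ and a path $\gamma:[0,1]\to C$ joining them. Since ${\rm{M}}(f)={\rm{T}}(f)/{\rm{Mod}}(f)$ is a quotient under a properly discontinuous group action, the projection ${\rm{T}}(f)\to {\rm{M}}(f)$ is a covering of orbifolds. Therefore $\gamma$ lifts to a continuous path $\tilde{\gamma}:[0,1]\to {\rm{T}}(f)$ whose image lies in the domain appearing in \eqref{lift}, namely
\[
\left\{[(g,h)]\in{\rm{T}}(f)\mid g\in\Bbb{R}(z),\, {\rm{Aut}}(g)=\{1\}\right\},
\]
with $\tilde{\gamma}(0)=[(g_1,h_1)]$ and $\tilde{\gamma}(1)=[(g_2,h_2)]$ for suitable quasi-conformal conjugacies $h_1,h_2$.

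Composing $\tilde\gamma$ with \eqref{the map} yields a continuous map $[0,1]\to{\rm{Mod}}(f)$ into a discrete set, which must therefore be constant. Consequently the class in ${\rm{Mod}}(f)$ of the reflection automorphism $u_1:z\mapsto\overline{h_1^{-1}(\overline{h_1(z)})}$ coincides with that of $u_2:z\mapsto\overline{h_2^{-1}(\overline{h_2(z)})}$. Proposition \ref{technical} then gives $h_\Bbb{R}(g_1)=h_\Bbb{R}(g_2)$, and commutativity of \eqref{lift} translates this into an equality of the values of $h_\Bbb{R}$ at the two chosen points of $C$. As $\langle g_1\rangle,\langle g_2\rangle$ were arbitrary, $h_\Bbb{R}$ is constant on $C$.

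The only real obstacle is the path-lifting step: one must verify that paths in $\mathcal{M}'_d\cap{\rm{M}}(f)-\mathcal{S}'$ admit continuous lifts to the specific subset of ${\rm{T}}(f)$ consisting of classes $[(g,h)]$ with $g$ real and without automorphisms. This follows because the covering ${\rm{T}}(f)\to{\rm{M}}(f)$ restricts to a covering of the preimage, and because the condition that $g$ is real and has trivial automorphism group is an open condition preserved along small perturbations of $[(g,h)]$ (it is the preimage of $\mathcal{M}'_d-\mathcal{S}'$). Hence standard covering-space path lifting applies and the argument concludes.
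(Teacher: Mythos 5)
Your argument is correct and relies on the same two ingredients as the paper's proof: the continuous map \eqref{the map} into the discrete group ${\rm{Mod}}(f)$, and Proposition~\ref{technical} identifying $\widetilde{h_\Bbb{R}}$ as constant on its level sets. Where you diverge is in the topological wrap-up. The paper simply notes that $\widetilde{h_\Bbb{R}}$ is locally constant on the domain in ${\rm{T}}(f)$ and that this local constancy descends through the (open) projection onto ${\rm{M}}(f)$, hence $h_\Bbb{R}$ is locally constant downstairs; no covering-space theory is invoked. You instead lift a path joining two points of a connected component, which requires knowing that the restriction of ${\rm{T}}(f)\to{\rm{M}}(f)$ over $\mathcal{M}'_d\cap{\rm{M}}(f)-\mathcal{S}'$ is an honest covering, i.e., that ${\rm{Mod}}(f)$ acts freely there; this is true precisely because you have removed the symmetry locus (the stabilizer of $[(g,h)]$ is controlled by ${\rm{Aut}}(g)$), but it is an extra verification the paper's route avoids. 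One imprecision: the condition ``$g\in\Bbb{R}(z)$'' is a \emph{closed} condition, not an open one, so your phrase ``open condition preserved along small perturbations'' is off. What actually saves the argument — and which you correctly note in the parenthetical — is that the subset $\left\{[(g,h)]\mid g\in\Bbb{R}(z),\,{\rm{Aut}}(g)=\{1\}\right\}$ is \emph{saturated}, i.e., the full preimage of $\mathcal{M}'_d\cap{\rm{M}}(f)-\mathcal{S}'$ under the projection (indeed, any class in the fiber over $\langle g\rangle$ with $g\in\Bbb{R}(z)$ admits a representative pair with $g$ real). Restricting a covering over a subset of the base to the full preimage is again a covering, so path-lifting applies; openness of the subset plays no role. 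With that correction, your proof is sound and gives the same conclusion.
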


\begin{proof}
{Proposition \ref{technical} says that the lift $\widetilde{h_{\Bbb{R}}}$ of $h_\Bbb{R}$ is constant on level sets of the continuous map 
$\left\{[(g,h)]\in{\rm{T}}(f)\,|\, g\in\Bbb{R}(z), {\rm{Aut}}(g)=\{\mathbf{1}\}\right\}\to {\rm{Mod}}(f)$
defined in \eqref{the map}. Since the target space is discrete, we conclude that 
$\widetilde{h_{\Bbb{R}}}: [(g,h)]\mapsto h_\Bbb{R}(g)$ is locally constant on the preceding domain. Now the commutative 
diagram \eqref{lift} implies that $h_\Bbb{R}$ is locally constant on the image 
$\mathcal{M}'_d\bigcap{\rm{M}}(f) -\mathcal{S}'$
of this space in the moduli space $\mathcal{M}_d(\Bbb{C})$ and this finishes the proof.}
\end{proof}
\begin{proof}[Proof of Theorem \ref{temp1}]
An immediate corollary of Proposition \ref{dimension} and Theorem \ref{main}.
\end{proof}

Of course, Theorem \ref{main} is not interesting unless the intersection  $\mathcal{M}'_d\bigcap{\rm{M}}(f)$ is of positive dimension or equivalently, there are plenty of classes  in the Teichmüller  space ${\rm{T}}(f)$ which are represented by real maps.

\begin{corollary}\label{main-corollary}
For a non-antipodal rational map $f\in\Bbb{R}(z)$ of degree $d\geq 2$ with ${\rm{Aut}}(f)=\{\mathbf{1}\}$,
the real entropy function 
$h_\Bbb{R}:\mathcal{M}'_d-\mathcal{S}'\rightarrow [0,\log(d)]$
is constant on a submanifold of real dimension $\dim_{\Bbb{C}}{\rm{M}}(f)$
passing through $\langle f\rangle$. 
\end{corollary}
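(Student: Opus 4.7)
The plan is to produce the required submanifold as the local fixed locus of an anti-holomorphic involution on the dynamical moduli space ${\rm{M}}(f)$ and then apply Theorem \ref{main}. I would define $\sigma:{\rm{M}}(f)\to{\rm{M}}(f)$ by $\langle g\rangle\mapsto\langle\bar g\rangle$; this is well defined because $\bar f=f$ implies that if $g=h\circ f\circ h^{-1}$ for some q.c.-homeomorphism $h$, then $\bar g=\bar h\circ f\circ\bar h^{-1}$ also lies in ${\rm{qc}}(f)$. Coefficient conjugation is anti-holomorphic on ${\rm{Rat}}_d(\Bbb{C})$, descends to $\mathcal{M}_d(\Bbb{C})$, and preserves the complex subvariety ${\rm{M}}(f)$. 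Since $f\in\Bbb{R}(z)$, the class $\langle f\rangle$ is a fixed point of $\sigma$; since ${\rm{Aut}}(f)=\{1\}$, the orbifold ${\rm{M}}(f)$ is a genuine complex manifold in a neighborhood of this point (the isotropy of the ${\rm{Mod}}(f)$-action on ${\rm{T}}(f)$ at any lift of $f$ is trivial).

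Next I would invoke the standard fact that the fixed locus of an anti-holomorphic involution on a complex manifold is, locally near any fixed point, a totally real submanifold whose real dimension equals the complex dimension of the ambient manifold: the conjugate-linear differential at the fixed point has a real fixed subspace of half the real dimension, and averaging furnishes holomorphic coordinates in which $\sigma$ becomes standard complex conjugation. This produces a real-analytic submanifold $N\subset{\rm{M}}(f)$ through $\langle f\rangle$ of real dimension $\dim_\Bbb{C}{\rm{M}}(f)$ on which $\sigma$ acts trivially. A clean alternative is to lift $\sigma$ to an anti-holomorphic involution of the Teichmüller space ${\rm{T}}(f)$, use the product description from \cite{MR1620850} to analyze its fixed locus factor by factor, and then push down via the properly discontinuous action of ${\rm{Mod}}(f)$.

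The fixed locus of $\sigma$ is a priori contained in $\mathcal{M}_d(\Bbb{R})\cap{\rm{M}}(f)$, which by Proposition \ref{antipodal} may include the antipodal component in odd degrees. By hypothesis $\langle f\rangle$ is non-antipodal, and the antipodal locus is closed, so shrinking $N$ if necessary places it inside $\mathcal{M}'_d\cap{\rm{M}}(f)$. Likewise, $\mathcal{S}'$ is closed and ${\rm{Aut}}(f)=\{1\}$ keeps $\langle f\rangle$ outside it, so after a further shrinking $N\subset\mathcal{M}'_d\cap{\rm{M}}(f)-\mathcal{S}'$. Theorem \ref{main} then yields that $h_\Bbb{R}$ is constant on the connected component of this set containing $\langle f\rangle$, and hence constant along $N$.

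The main obstacle I anticipate is rigorously justifying the dimension count for the fixed locus, namely verifying that $\sigma$ really is anti-holomorphic on ${\rm{M}}(f)$ (and not merely a real-analytic involution) and that the standard linearization argument genuinely applies at $\langle f\rangle$ when ${\rm{Aut}}(f)$ is trivial. The Teichmüller-space route circumvents this by reducing matters to the classical situation of anti-holomorphic involutions on ordinary Teichmüller spaces of punctured Riemann surfaces, whose real loci are well known to have half the real dimension. A secondary check worth doing is that the resulting $N$ is positive-dimensional whenever ${\rm{M}}(f)$ is, so that the corollary gives content beyond the pointwise statement of Theorem \ref{main}.
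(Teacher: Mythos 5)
Your proposal captures the paper's essential idea: exploit the coefficient-conjugation involution and its anti-holomorphicity to produce a half-dimensional real fixed locus, then invoke Theorem \ref{main}. The paper implements this on the Teichmüller space ${\rm{T}}(f)$ rather than on ${\rm{M}}(f)$, defining $\iota\colon[(g,h)]\mapsto[(\tilde g,\tilde h)]$ and establishing that ${\rm{d}}_{[(f,1)]}\iota$ is conjugate-linear directly from the Beltrami-differential model of the tangent space (the dilatation of $\tilde h$ is $\tilde\mu$, and $\mu\mapsto\tilde\mu$ is visibly conjugate-linear on $L^\infty$); it then compares the $\pm1$ eigenspaces via multiplication by $\mathrm{i}$ rather than invoking a linearization theorem. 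This is more self-contained than your main route, which needs the injection ${\rm{M}}(f)\hookrightarrow\mathcal{M}_d(\Bbb{C})$ to be a holomorphic \emph{immersion} in order to transport anti-holomorphicity from the ambient moduli space — precisely the technical point you rightly flagged as the anticipated obstacle, and one that is cleaner to sidestep at the Teichmüller level. Your alternative route (lift to ${\rm{T}}(f)$ and use the McMullen–Sullivan product decomposition) is closer in spirit to what the paper does, though the paper avoids the product structure and works directly with invariant Beltrami coefficients; the handling of the antipodal and symmetry loci by shrinking the fixed locus and citing Proposition \ref{antipodal} matches the paper's argument exactly.
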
      

\begin{proof}
The involution from Definition \ref{involution-def} acts on the ${\rm{T}}(f)$
via 
\begin{equation}\label{involution1}
\iota:[(g,h)]\mapsto [(\tilde{g},\tilde{h})].
\end{equation}
Keep in mind that  by identities \eqref{differentiation}, if $h$ is a q.c.-homeomorphisms of dilatation $\mu$ then 
$\tilde{h}$ would be another such homeomorphism of dilatation $\tilde{\mu}$. Hence the transformation $\iota$ from \eqref{involution1} acts on both 
${\rm{T}}(f)$ and $\left\{[(g,h)]\in{\rm{T}}(f)\,|\, {\rm{Aut}}(g)=\{\mathbf{1}\} \right\}$ in a  well defined manner because $\tilde{f}=f$ and the involution   in \eqref{involution} respects the composition and preserves the group of  Möbius transformations. If ${\rm{Aut}}(g)=\{\mathbf{1}\} $ and $(g,h)\sim \left(\bar{g},\tilde{h}\right)$, then the maps $g$ and $\tilde{g}=\bar{g}$ are Möbius conjugate. Recalling Proposition \ref{antipodal}, this implies that $g$ is with real coefficients  if it is away from the antipodal locus. Nonetheless, the conditions of being on the antipodal locus or having non-trivial symmetries are closed. So if one can compute the dimension of ${\rm{Fix}}\left(\iota:{\rm{T}}(f)\rightarrow {\rm{T}}(f)\right)$ around a point 
$[(g,h)]=[(\tilde{g},\tilde{h})]$ fixed by $\iota$ where $g\in\Bbb{R}(z)$ is non-antipodal and ${\rm{Aut}}(g)=\{\mathbf{1}\}$, then something can be inferred about the dimension  $\mathcal{M}'_d\bigcap{\rm{M}}(f)$.	 \\
\indent
To do so, suppose $f\in\Bbb{R}(z)$ is neither antipodal nor admits non-trivial Möbius symmetries. 
Clearly $[(f,\mathbf{1})]$ is a fixed point of the $C^\infty$ involution $\iota$  of the complex manifold ${\rm{T}}(f)$. 
The tangent map ${\rm{d}}_{[(f,\mathbf{1})]}\iota$ of $\iota$ at this point is a real-linear involution of the complex vector space ${\rm{T}}_{[(f,\mathbf{1})]}\left({\rm{T}}(f)\right)$. Thus the tangent space decomposes to the direct sum of $+1$ and $-1$ eigenspaces and the former is the tangent space to ${\rm{Fix}}\left(\iota:{\rm{T}}(f)\rightarrow {\rm{T}}(f)\right)$ 
at $[(f,\mathbf{1})]$. We claim the dimension of these two eigenspaces coincide. Any representative $(g,h)$ of a point in the Teichmüller  space ${\rm{T}}(f)$ determines an $f$-invariant Beltrami differential 
$\mu(z)\frac{{\rm{d}}\bar{z}}{{\rm{d}}z}$
where $\mu$ belongs to the unit ball in
$ L^\infty(\hat{\Bbb{C}})$. So there is a $\Bbb{C}$-linear surjection from the space 
$$\left\{\mu\in L^\infty(\hat{\Bbb{C}})\,|\, \mu(z)\frac{{\rm{d}}\bar{z}}{{\rm{d}}z} \text{ is } f\text{-invariant.}\right\}$$
onto the tangent space ${\rm{T}}_{[(f,\mathbf{1})]}\left({\rm{T}}(f)\right)$. 
Given the way $\iota$ is defined in \eqref{involution1} and remembering that for a q.c.-homeomorphism $h$ of dilatation 
$\mu$, the map $\tilde{h}$ is of dilatation $\tilde{\mu}$, we deduce that the involution ${\rm{d}}_{[(f,\mathbf{1})]}\iota$ of 
${\rm{T}}_{[(f,\mathbf{1})]}\left({\rm{T}}(f)\right)$ comes from the involution $\mu\mapsto\tilde{\mu}$ acting on 
$ L^\infty(\hat{\Bbb{C}})$. The description of this involution in \eqref{involution} clearly indicates that this map is conjugate-linear and hence, so is ${\rm{d}}_{[(f,\mathbf{1})]}\iota$. This means scaler multiplication by ${\rm{i}}$ maps the eigenspace of $+1$ to that of $-1$ and vice versa. So, as real vector spaces, they are of dimension 
$$\dim_{\Bbb{C}}{\rm{T}}_{[(f,\mathbf{1})]}\left({\rm{T}}(f)\right)=\dim_{\Bbb{C}}{\rm{T}}(f)=\dim_{\Bbb{C}}{\rm{M}}(f).$$ 
\end{proof}
The complex dimension of the Teichmüller  space ${\rm{T}}(f)$, or equivalently that of the moduli space ${\rm{M}}(f)$, 
can be determined in terms of certain invariants of the holomorphic system $f:\hat{\Bbb{C}}\rightarrow\hat{\Bbb{C}}$
(\cite[Theorem 6.8]{MR1620850},  \cite[p. 128]{MR2241787}):
\begin{equation}\label{dimension formula}
\dim_{\Bbb{C}}{\rm{M}}(f)=n_{AC}+n_{HR}+n_{LF}-n_{PB};
\end{equation}
where
\begin{enumerate}[-]
\item $n_{AC}$ is the number of \textit{foliated equivalence classes of acyclic critical points in the Fatou set};
\item $n_{HR}$ is the number of cycles of Herman rings;
\item $n_{LF}$ is the number of ergodic line fields on the Julia set;
\item $n_{PB}$ is the number of cycles of parabolic basins.
\end{enumerate}
Corollary \ref{main-corollary} thus establishes a lower bound for the real dimension of the isentrope passing through a generic real rational map $f\in\Bbb{R}(z)$ in terms of the complex dynamics of $f$.

\section{Examples of stable families of constant real entropy}\label{families}
This final section is devoted to examples of stable families of rational maps with real coefficients where, in accord with Theorem \ref{temp1} and Proposition \ref{temp1'}, the real entropy remains constant.  We first treat the case of hyperbolic maps in \S\ref{hyperbolic subsection} by a different approach  utilizing the well known techniques of kneading theory \cite{MR970571}. In \S\ref{Lattes subsection} we calculate the real entropy for the prominent family of flexible Latt\`es maps. At last, \S\ref{maximal subsection} entails a detailed discussion on families where the real entropy is the maximum $\log(d)$.

\subsection{Hyperbolic components}\label{hyperbolic subsection}
Recall that a rational map is called hyperbolic if each of its critical orbits converges to an attracting periodic cycle 
\cite[Theorem 19.1]{MR2193309}. Such maps form an open subset of the moduli space 
$\mathcal{M}_d(\Bbb{C})$ whose connected components are called hyperbolic components.
Hyperbolic maps are known to be $\mathcal{J}$-stable and thus, Proposition \ref{temp1'} implies that each connected component of the intersection with $\mathcal{M}'_d$ of a hyperbolic component in $\mathcal{M}_d(\Bbb{C})$  is included in a single isentrope.  Here, we present a completely different proof of this fact that also sheds light on the entropy values realized by real hyperbolic maps.

\begin{theorem}\label{entropy constant over hyperbolic}
Let $d\geq 3$ and $\mathcal{U}$ be a connected component of the intersection of a hyperbolic component of $\mathcal{M}_d(\Bbb{C})$ with the real subvariety $\mathcal{M}'_d$. Then the function $h_\Bbb{R}$ is constant over 
$\mathcal{U}$ with a value which is the logarithm of an algebraic number. 
\end{theorem}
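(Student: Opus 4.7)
The plan is to prove this by applying Milnor--Thurston kneading theory \cite{MR970571}, specifically the circle-map version in \cite[Theorem 1$'$]{MR579440}. Fix a class $\langle f\rangle\in\mathcal{U}$. By Proposition \ref{degree components} the restriction $f\restriction_{\hat{\Bbb{R}}}$ is piecewise monotone, with turning points the real critical points of $f$. Because $\mathcal{U}$ is contained in a hyperbolic component, every complex critical orbit of $f$ converges to an attracting periodic cycle; so for each real critical point, either its whole forward orbit stays on $\hat{\Bbb{R}}$ and converges to a real attracting cycle, or it leaves $\hat{\Bbb{R}}$ after finitely many steps and is afterwards captured by an attracting cycle off the real circle. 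In either case the kneading sequence of each turning point is eventually periodic.

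The first step is to show that the combinatorial data of $f\restriction_{\hat{\Bbb{R}}}$ is locally constant on $\mathcal{U}$. This uses $\mathcal{J}$-stability of hyperbolic families, exactly as in the proof of Proposition \ref{temp1'}: a small real deformation $\{f_s\}$ admits a holomorphic motion $i_s$ conjugating $f\restriction_{\mathcal{J}(f)}$ to $f_s\restriction_{\mathcal{J}(f_s)}$, and since the family consists of real maps the Schwarz-reflection argument of \S\ref{proof section} forces $i_s$ to commute with complex conjugation, hence to map $\mathcal{J}_{\Bbb{R}}(f)$ homeomorphically onto $\mathcal{J}_{\Bbb{R}}(f_s)$. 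Combined with continuous dependence of the real critical points and real attracting cycles (which cannot collide under hyperbolic deformation), this shows that the cyclic order of turning points on $\hat{\Bbb{R}}$ and the itinerary of each turning-point orbit through the turning-point partition remain fixed throughout $\mathcal{U}$.

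The second step is to invoke the kneading-determinant identity: the exponential growth rate of lap numbers satisfies $h_\Bbb{R}(f)=\log s(f)$, where $1/s(f)$ is the smallest positive real zero of a kneading determinant $D(t)$ whose coefficients are integer-linear combinations of the kneading invariants of the turning points (one may either use the circle adaptation of \cite[Theorem 1$'$]{MR579440}, or replace $\hat{\Bbb{R}}$ by the smallest compact interval carrying the real Julia set via \eqref{real entropy}). In the hyperbolic case every kneading sequence is eventually periodic, so $D(t)\in\Bbb{Q}(t)$ is a rational function with integer coefficients; its zeros are therefore algebraic numbers, giving the second assertion. Constancy then follows: by step one the rational function $D(t)$ does not change as $\langle f\rangle$ varies in $\mathcal{U}$, so neither does $s(f)$; by connectedness of $\mathcal{U}$, $h_\Bbb{R}$ is constant on $\mathcal{U}$.

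The main obstacle is formulating kneading theory properly for circle maps and handling escape of critical orbits from $\hat{\Bbb{R}}$. In the covering case $|\deg_{\mathrm{top}}|=d$ there are no turning points and the entropy is simply $\log d$, so the claim is trivial; otherwise one cuts $\hat{\Bbb{R}}$ at a non-periodic point outside the real Julia set to reduce to an interval map with the same entropy, as in \eqref{real entropy}. When a real critical orbit escapes $\hat{\Bbb{R}}$, hyperbolicity ensures that the escape time is locally constant on $\mathcal{U}$, so one can truncate the itinerary at the escape moment and append a fixed ``off-axis'' symbol; the extended itinerary is still eventually periodic (as the off-axis continuation falls into a fixed attracting cycle) and varies trivially in $\mathcal{U}$, so both the rationality of $D(t)$ and its local constancy survive. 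This is the only delicate point; once it is handled the two steps above close the argument.
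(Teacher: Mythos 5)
Your proof routes the constancy through $\mathcal{J}$-stability and the Schwarz-reflection argument of Proposition \ref{temp1'}, which is precisely the avenue the paper deliberately sidesteps: the text introducing \S\ref{hyperbolic subsection} already notes that $\mathcal{J}$-stability immediately implies hyperbolic components are contained in single isentropes, and then announces "a completely different proof." Indeed, once you invoke the holomorphic motion restricted to $\mathcal{J}_\Bbb{R}$, the resulting conjugacy between $f\restriction_{\mathcal{J}_\Bbb{R}(f)}$ and $f_s\restriction_{\mathcal{J}_\Bbb{R}(f_s)}$ already gives constancy of $h_\Bbb{R}$ (since these subsystems carry the full entropy); the kneading determinant then does no work in establishing constancy. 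The paper's own argument is shorter and avoids any deformation of the Julia set: $\mathcal{U}-\mathcal{S}'$ is connected (the codimension count in Proposition \ref{codimension>1} needs $d\geq 3$ exactly here), $h_\Bbb{R}$ is continuous on it (Proposition \ref{dimension}), and Lemma \ref{algebraic} shows $\exp\left(h_\Bbb{R}\right)$ is an algebraic number at every point of $\mathcal{U}-\mathcal{S}'$. A continuous real-valued function on a connected space taking values in a countable set is constant, and that closes the argument. So kneading theory is used only to pin down the \emph{values}, and then constancy falls out as a corollary rather than requiring a separate stability input.

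Beyond that structural difference there are two genuine gaps. First, the "escape" scenario you single out as the main delicate point does not occur: if $c\in\hat{\Bbb{R}}$ is a critical point of $f\in\Bbb{R}(z)$, then the entire forward orbit of $c$ lies on $\hat{\Bbb{R}}$, and it can only converge to a \emph{real} attracting cycle (a real point cannot lie in the basin of a non-real attracting cycle, since that basin and its complex conjugate are disjoint). There is no truncation, no off-axis symbol, and no escape time to keep track of. Second, your claim that $\mathcal{J}$-stability forces the itineraries of the turning points to be locally constant does not follow: the holomorphic motion conjugates only the restriction to $\mathcal{J}(f)$, whereas the turning points are Fatou points, and a hyperbolic component may very well cross the codimension-one locus where one turning point lands exactly on another (a post-critical coincidence is compatible with hyperbolicity), at which point the one-sided kneading sequences jump. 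What stays constant is the smallest zero of the kneading determinant, not the determinant itself, and this is exactly why the paper does not try to hold the kneading data fixed. Finally, note that Lemma \ref{algebraic} has to contend with a real subtlety you did not address — when the limiting attracting cycle contains a turning point, the iterates of a small one-sided interval can alternate sides, and one only gets eventual periodicity of period $2p$ after a careful orientation bookkeeping.
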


\begin{remark}
There is a thorough classification of hyperbolic components of $\mathcal{M}_2(\Bbb{C})$ \cite{MR1246482, MR1047139}. In the case of $d=2$ Theorem \ref{entropy constant over hyperbolic} still remains valid but requires a little bit of more work as excluding the symmetry locus might cause the real hyperbolic component $\mathcal{U}$ to become disconnected; cf. \cite{2019arXiv190103458F}.
\end{remark}

\begin{proof}[Proof of Theorem \ref{entropy constant over hyperbolic}]
{The open subset $\mathcal{U}-\mathcal{S}'$ of $\mathcal{M}'_d$ is connected  because 
$\mathcal{U}\bigcap\mathcal{S}'$ is of codimension at least two; see Proposition \ref{codimension>1}.  Any real representative $f\in\Bbb{R}(z)$ of a point in it restricts to a continuous multimodal circle map
$f\restriction_{\hat{\Bbb{R}}}:\hat{\Bbb{R}}\rightarrow\hat{\Bbb{R}}$ 
that satisfies the hypothesis of Lemma \ref{algebraic} below. Therefore, $\log\left(h_\Bbb{R}\left(\langle f\rangle\right)\right)$
is always algebraic for a point $\langle f\rangle$ from this open connected set. The continuity of   
$h_{\Bbb{R}}:\mathcal{M}'_d-\mathcal{S}'\rightarrow\left[0,\log(d)\right]$ then
yields its constancy over $\mathcal{U}-\mathcal{S}'$.}
\end{proof}

\begin{lemma}\label{algebraic}
Let $I$ be an interval $[a,b]$ or a circle. Let $f:I\rightarrow I$ be a continuous multimodal self-map of $I$ whose turning points are attracted by  periodic orbits. Then the number ${\rm{e}}^{h_{\rm{top}}(f)}$ is algebraic.
\end{lemma}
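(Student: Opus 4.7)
The plan is to invoke the Milnor--Thurston kneading theory. For a continuous multimodal map $f:[a,b]\to[a,b]$ with turning points $c_1<\cdots<c_{\ell-1}$, one associates the kneading matrix $N(t)$, a matrix of formal power series over $\mathbb{Z}$ whose entries track the itineraries of the one-sided orbits $f^{\circ n}(c_i^{\pm})$ with respect to the partition by the $c_i$. The theorem of Milnor and Thurston (cf.\ \cite{MR970571}) identifies $e^{-h_{\rm top}(f)}$ with the smallest positive real zero of the kneading determinant $D(t)=\det N(t)$; since the lap number growth rate already determines $h_{\rm top}(f)$ via \eqref{lap number}, it suffices to establish that $D(t)$ lies in $\mathbb{Q}(t)$.

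The key step is translating the hypothesis into eventual periodicity of the kneading sequences. Each turning point $c_i$ is attracted to a periodic cycle $\mathcal{O}_i$ of period $p_i$; choose a neighborhood $U_i$ of $\mathcal{O}_i$ disjoint from the (finite) turning set of $f$. After finitely many iterates, $f^{\circ n}(c_i)$ lies permanently in $U_i$, so the lap containing it depends only on $n\bmod p_i$, and the corresponding kneading sequence is eventually periodic. A $\mathbb{Z}$-valued formal power series whose coefficients are eventually periodic represents a rational function whose denominator is a product of factors $1-t^{p_i}$ and whose numerator has integer coefficients. Hence each entry of $N(t)$, and therefore $D(t)$, lies in $\mathbb{Q}(t)$, and its smallest positive real zero is an algebraic number, making $\exp\!\left(h_{\rm top}(f)\right)$ algebraic.

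For the circle case, I will reduce to the interval setting in one of two ways: either by cutting $S^1$ at an $f$-periodic point (which exists in abundance, e.g.\ as a repelling fixed point of a suitable iterate, given the attracting-orbit hypothesis for turning points) and applying the lap-number formula \eqref{lap number} to the resulting discontinuous piecewise monotone map of $[0,1)$, or by invoking the circle analogue of the Milnor--Thurston construction. In either presentation, eventual periodicity of the turning-point itineraries still yields a rational kneading determinant, so the algebraicity conclusion persists.

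The main obstacle will be the degenerate situation in which a turning point maps onto another turning point at some finite iterate, which complicates the definition of the one-sided labels $c_i^{\pm}$. This is handled either by the standard Milnor--Thurston convention for such collisions (taking one-sided limits of kneading invariants of nearby regular points) or by observing directly that both one-sided itineraries at a collision still inherit eventual periodicity from the ambient attracting-orbit hypothesis on the colliding turning points, which suffices to keep $D(t)\in\mathbb{Q}(t)$.
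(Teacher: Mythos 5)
Your proposal uses the same Milnor--Thurston framework as the paper, and the overall plan is right: reduce to showing the kneading determinant is a rational function by establishing eventual periodicity of the kneading coordinates. But the central reduction step has a genuine gap. You write ``choose a neighborhood $U_i$ of $\mathcal{O}_i$ disjoint from the (finite) turning set of $f$'' --- this is simply impossible when the attracting cycle $\mathcal{O}_i$ itself contains a turning point, which is exactly the superattracting situation and is fully allowed by the hypotheses (and in fact is the typical situation for hyperbolic postcritically finite maps). In that case $f^{\circ n}(c_i)$ converges to a turning point $x_r\in\mathcal{O}_i$ along a subsequence without landing on it, and one must rule out the possibility that the one-sided images $f^{\circ n}\bigl([c_i,c_i+\delta)\bigr)$ alternate between the two laps meeting at $x_r$. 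The paper's proof does precisely this: since $x_r$ is a fixed point of $f^{\circ p}$ that is also a turning point, it is a one-sided extremum of $f^{\circ p}$, so once the orbit is close enough it is trapped on one side of $x_r$ forever --- always to the left if $x_r$ is a local max of $f^{\circ p}$, always to the right if a local min. Your final paragraph on ``collisions'' only addresses the different (and easier) case of a turning point hitting another turning point in finite time, not the case of infinite-time approach to a turning point on the attracting cycle, so it does not close the gap.

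Two smaller issues: the kneading matrix $N$ is $(l-1)\times l$, hence not square, so ``$D(t)=\det N(t)$'' needs the standard adjustment (deleting a column and dividing by $1-\epsilon_n t$ as in the paper); and the eventual period of $\theta_i(c_m^+)$ need not be $p_i$ --- the shape $\epsilon$ can flip, so one should expect period (at most) $2p_i$, as the paper's argument tracks. Neither of these is a serious problem once you have the lap index eventually periodic, but the turning-point-on-cycle case is a real omission that would need the paper's one-sided-extremum argument to repair.
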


\begin{proof}
Let us first deal with the interval case. We need to use the classical kneading theory of Milnor and Thurston developed in \cite{MR970571}. Suppose 
$f:[a,b]\rightarrow [a,b]$ is a
multimodal self-map of $I:=[a,b]$ with  turning points $c_1<\dots<c_{l-1}$ and laps 
$I_1=[a,c_1], I_2=[c_1,c_2],\dots,I_{l-1}=[c_{l-2},c_{l-1}], I_l=[c_{l-1},b]$. 
The \textit{shape} of the restriction $f\restriction_{I_n}$ of $f$ to the $n^{\rm{th}}$ lap $I_n$ will be denoted by $\epsilon_n\in\{\pm1\}$ 
which is $+1$ if the restriction is increasing and $-1$ when it is decreasing.
To each point $x\in I$,
one can assign infinite vectors $\left(\theta_i(x^+)\right)_{i\geq 0}$ and 
$\left(\theta_i(x^-)\right)_{i\geq 0}$ whose components come from the set of symbols
$\left\{\pm I_1,\dots,\pm I_l\right\}$. Here is the definition: $\theta_i(x^+)=\epsilon I_n$ 
(respectively $\theta_i(x^-)=\epsilon I_n$) means that there is a half-open interval with its left end (resp. right end)
 at $x$ on which $f^{\circ i}$ is monotonic of shape $\epsilon\in\{\pm 1\}$ and is mapped by $f^{\circ i}$ into $I_n$. 
Associated with them are formal power series $\theta(x^+):=\sum_{i=0}^\infty\theta_i(x^+)t^i$
and $\theta(x^-):=\sum_{i=0}^\infty\theta_i(x^-)t^i$ from $V[[t]]$ with $V$ being the free abelian group
$\Bbb{Z}.\left\{I_1,\dots,I_l\right\}$.
The next definition is that of the \textit{kneading increments} of $f$ given by 
$\nu_m:=\theta(c_m^+)-\theta(c_m^{-})$ for any $1\leq m\leq l-1$.  One can then form the $(l-1)\times l$ 
\textit{kneading matrix} $N$ of power series in 
$\Bbb{Z}[[t]]$ with $N_{mn}$ being the coefficient of $I_n$ in $\nu_m$, i.e. 
$\nu_m=\sum_{n=1}^lN_{mn}I_n$. Denoting the orientation of $f\restriction_{I_n}$ by $\epsilon_n\in\{\pm 1\}$ and the determinant of the submatrix of $N$ obtained from deleting the $n^{\rm{th}}$ column by $D_n(t)$, it can be proved that the formal power series
$(-1)^{n+1}D_n(t)/(1-\epsilon_nt)$ is independent of $1\leq n\leq l$. This common power series is called 
\textit{the kneading invariant} of $f$ and will be denoted by $D(t)$. It is easy to observe that the coefficients of $D(t)$ are bounded integers, so 
$D(t)$ defines an analytic function on the disk $|t|<1$. 
Here is the main result (\cite[Theorem 6.3]{MR970571}): 
\begin{itemize}
\item[]\textit{If $h_{\rm{top}}(f)=0$, $D(t)$ does not have any root in the open unit disk and otherwise,
${\rm{e}}^{-h_{\rm{top}}(f)}$
 is the smallest root of $D(t)$ in $[0,1)$.}
\end{itemize}
Thus it suffices to show that under our assumption the kneading invariant $D(t)$ is a rational map with integer coefficients whose roots,
including ${\rm{e}}^{-h_{\rm{top}}(f)}$, are algebraic numbers. In order to do so, one just needs to show that kneading coordinates 
$\left(\theta_i(c_m^+)\right)_{i\geq 0}$ and 
$\left(\theta_i(c_m^-)\right)_{i\geq 0}$
are eventually periodic. We will argue that 
$\left(\theta_i(c_m^+)\right)_{i\geq 0}$ is periodic; the case of $\left(\theta_i(c_m^-)\right)_{i\geq 0}$
is completely similar. Suppose for $i\geq q$, $f^{\circ i}(c_m)$ is in the immediate basin of the periodic point $x_{i\,{\rm{mod}}\,p}$ from the orbit 
$x_0\mapsto x_1\mapsto\cdots\mapsto x_{p-1}\mapsto x_0$.  We claim that for $\delta>0$ small enough, one can take $q'>q$ so large that for $i>q'$ the interval 
$f^{\circ i}\left((c_m,c_m+\delta)\right)$ is in the interior of a lap $I_{n_{i\,{\rm{mod}}\,p}}$ of $f$ which is dependent only on the remainder of $i$ modulo $p$. To see this, fix $0\leq r<p$. Note that $f^{\circ (jp+r)}(c_m)\to x_r$ as $j\to\infty$; and so if $x_r$ is not a turning point, $f^{\circ (jp+r)}(c_m)$ and hence the image  under $f^{\circ (jp+r)}$ of small enough 
non-degenerate  subintervals $[c_m,c_m+\delta]$ of the basin belong to the interior of the lap that has $x_r$ for large enough $j$'s; say for $j\geq j_r$. The same holds even when $x_r$ is a turning point of $f$; we only have to rule out the possibility of $f^{\circ (jp+r)}(c_m)$ alternating between the two laps that have $x_r$ in common:
if the fixed point 
 $$x_r=\lim_{j\to\infty}f^{\circ (jp+r)}\left((c_m,c_m+\delta)\right)$$ 
   of $f^{\circ p}$ is a turning point  as well,  then for $j$ large enough, 
$f^{\circ (jp+r)}\left((c_m,c_m+\delta)\right)$ 
 always lands to the left of $x_r$ if 
$x_r$ is a local maximum of $f^{\circ p}$ and to the right if it is a local minimum. 
Repeating this argument for all $r\in\{0,\dots,p-1\}$,  $p.\left(\max\{j_0,\dots,j_{p-1}\}+1\right)$ then works as the desired $q'$.\\
\indent 
Next, after decreasing $\delta$ if necessary, suppose all iterates $f,\dots, f^{\circ (q'+1)}$ restrict to monotonic maps on $[c_m,c_m+\delta]$. We now show that 
$\left(\theta_i(c_m^+)\right)_{i\geq 0}$ is periodic of period $2p$ for $i>q'$:
$f^{\circ i}\left((c_m,c_m+\delta)\right)$ is contained in the lap $I_{n_{i\,{\rm{mod}}\,p}}$, and the 
degree of $f^{\circ i}\restriction_{[c_m,c_m+\delta]}$ is the product of the degree of the monotonic map 
$f^{\circ (q'+1)}\restriction_{[c_m,c_m+\delta]}$ by the degrees of $f$ on laps $I_{n_{(q'+1)\,{\rm{mod}}\,p}},\dots,I_{n_{(i-1)\,{\rm{mod}}\,p}}$. Replacing $i$ with $i+2p$ does not affect the former  while multiplies the latter product by $\left(\prod_{i=1}^p\epsilon_{n_{i\,{\rm{mod}}\,p}}\right)^2=1$.\\
\indent
Finally, notice that kneading theory has also been developed for continuous multimodal circle maps or equivalently, for multimodal interval maps with finitely many discontinuities; see \cite[appendix]{MR1029100}. 
Therefore, Lemma \ref{algebraic} is valid for multimodal circle map too. 
\end{proof}

\begin{remark}
Much more can be said about arithmetic properties of entropy values of \textit{post-critically finite} multimodal maps. The paper \cite{MR3289916} establishes that a real algebraic integer arises as 
${\rm{exp}}\left(h_{\rm{top}}\left(f:I\rightarrow I\right)\right)$ 
for a critically finite multimodal map $f:I\rightarrow I$ if and only if  it is a \textit{weak Perron number}, i.e. at least as large as the absolute values of its Galois conjugates. 
\end{remark}

\begin{remark}
The intersection with $\mathcal{M}'_d$ of a hyperbolic component in $\mathcal{M}_d(\Bbb{C})$ could be disconnected and thus $h_\Bbb{R}$ may assume different values at real maps from the same complex hyperbolic component. As an example, the real entropy of $z^2+c$ is $0$ for $c>\frac{1}{4}$ (i.e. for real parameters right of the Mandelbrot set) 
and is $\log(2)$ for $c<-2$ (i.e. for real parameters left of the Mandelbrot set). But in both cases the quadratic polynomial lies in the \textit{escape} component of $\mathcal{M}_2(\Bbb{C})$ consisting of quadratic rational maps whose critical points converge to an attracting fixed point. 
More generally, the hyperbolic component of $\mathcal{M}_d(\Bbb{C})$  containing the \textit{polynomial shift locus} (the affine conjugacy classes of degree $d$ maps whose critical points are in the basin of infinity which is known to be connected \cite[Corollary 6.2]{MR2827015}) has disconnected intersection with $\mathcal{M}'_d$. 
\end{remark}

\subsection{The Latt\`es family}\label{Lattes subsection}
A family of \textit{flexible Latt\`es maps} of the same degree is an example of a quasi-conformally trivial  family of non-hyperbolic maps.  (A detailed treatment of \textit{Latt\`es maps} can be found in \cite{MR2348953}.)  Conjecturally, these are the only families of rational maps admitting \textit{invariant line fields}. Any such  family forms a single dynamical moduli space of complex dimension one. The example below verifies Theorem \ref{main} through calculating the real entropy of those  flexible Latt\`es maps that preserve the real circle. It is well known that the Julia set of a Latt\`es map is the whole Riemann sphere and hence, unlike \S\ref{hyperbolic subsection}, the subsystem obtained from restricting to the real circle can never have an attractor. 

\begin{example}\label{Lattes}
Set $d=m^2\geq 4$ for an integer $m\neq 0,\pm 1$ and consider a flexible Latt\`es map $f$ of degree $d$ obtained from the multiplication by $m$ map $[m]:[z]\mapsto [mz]$ on the elliptic curve $E=\Bbb{C}\big/\Bbb{Z}+\Bbb{Z}\tau$ with $\tau$ being in the upper half plane. So $f$ makes the following diagram commutative where the columns are the two-fold ramified covering 
$\pi:E\rightarrow\Bbb{P}^1(\Bbb{C})$ obtained by taking the quotient of $E$ by the action of the involution 
$[z]\mapsto [-z]$; the morphism which is induced by the Weierstrass function $\wp:\Bbb{C}\rightarrow\hat{\Bbb{C}}$ of the lattice $\Lambda:=\Bbb{Z}+\Bbb{Z}\tau$. 
\begin{equation}\label{diagram1}
\xymatrix{E=\Bbb{C}\big/\Bbb{Z}+\Bbb{Z}\tau \ar[d]^\pi\ar[r]^{[m]} & 
E=\Bbb{C}\big/\Bbb{Z}+\Bbb{Z}\tau \ar[d]^\pi\\
E\big/[z]\sim [-z]=\Bbb{P}^1(\Bbb{C})\ar[r]^f& E\big/[z]\sim [-z]=\Bbb{P}^1(\Bbb{C})}
\end{equation}
The map $f$  preserves $\hat{\Bbb{R}}$ if the Weierstrass function
$$\wp(z)=\frac{1}{z^2}+\sum_{0\neq w\in\Bbb{Z}+\Bbb{Z}\tau}\left(\frac{1}{(z-w)^2}-\frac{1}{w^2}\right)$$
commutes with the complex conjugation; for instance, when the lattice $\Bbb{Z}+\Bbb{Z}\tau$ is invariant under the complex conjugation. This happens if and only if ${\rm{Re}}(\tau)\in\frac{1}{2}\Bbb{Z}$. Up to the action of ${\rm{SL}}_2(\Bbb{Z})$,
${\rm{Re}}(\tau)$ can then be assumed to be $0$ or $\frac{1}{2}$. Let us work with the former and in the case of 
${\rm{Re}}(\tau)=\frac{1}{2}$ one merely needs to replace $\tau$ with $\tau-\frac{1}{2}$ in the subsequent discussion.\\
\indent Assuming that the period $\tau$ is purely imaginary, let us investigate the induced map $f$ and  the corresponding dynamics on the real circle. 
We shall do so by pulling back to the dynamics on $\wp^{-1}(\hat{\Bbb{R}})$ or on the invariant subset $\pi^{-1}(\hat{\Bbb{R}})$ of the elliptic curve $E$. 
Since $\wp(\bar{z})=\overline{\wp(z)}$, $\wp(z)$ is real if and only if either $2\,{\rm{Re}}(z)$ or $2\,{\rm{Im}}(z){\rm{i}}$ belongs to the rectangular lattice 
$\Lambda=\Bbb{Z}+\Bbb{Z}\tau$. We conclude that $\wp^{-1}(\hat{\Bbb{R}})$ is the countable set of lines parallel to axes in the complex plane whose $x$ and $y$ intercepts come from $\frac{1}{2}\Bbb{Z}$ and 
$\frac{{\rm{Im}}(\tau)}{2}\Bbb{Z}$. This collection determines a tessellation of the complex plane with the smaller rectangle of vertices $0,\frac{1}{2},\frac{\tau+1}{2}$ and $\frac{\tau}{2}$ (in the counterclockwise order) which is bijectively mapped onto $\hat{\Bbb{R}}$ via $\wp$.\\
\indent  
The  union $\wp^{-1}(\hat{\Bbb{R}})$ of lines in $\Bbb{C}$ projects onto two pairs of parallel circle on the torus  
$E=\Bbb{C}\big/\Bbb{Z}+\Bbb{Z}\tau$;
\begin{equation}\label{circles}
\left(\left\{[x]\,|\, x\in\Bbb{R}\right\}, \left\{\left[x+\frac{\tau}{2}\right]\,\Big|\, x\in\Bbb{R}\right\}\right),
\quad \left(\left\{[x\tau]\,|\, x\in\Bbb{R}\right\}, \left\{\left[x\tau+\frac{1}{2}\right]\,\Big|\, x\in\Bbb{R}\right\}\right);
\end{equation}
that intersect each other in four points (the $2$-torsion points) and constitute $\pi^{-1}(\hat{\Bbb{R}})$. 
So the topological entropy of 
$f\restriction_{\hat{\Bbb{R}}}$ coincides with that of the restriction of $[m]$ to this union of circles because $\pi$ establishes a finite degree semi-conjugacy between these two systems. According to the parity of $m$, on each circle from the preimage $\pi^{-1}(\hat{\Bbb{R}})$ the map 
$[m]$  restricts to the multiplication by $m$  onto another circle from this union. We conclude that $h_\Bbb{R}(f)$ is the topological entropy of 
\begin{equation}\label{Chebyshev}
S^1=\Bbb{R}/\Bbb{Z}\rightarrow S^1=\Bbb{R}/\Bbb{Z}: [x]\mapsto[mx],
\end{equation}
i.e. 
$\log\left(|m|\right)$. 
Notice that under $\pi$ each circle form \eqref{circles} is mapped onto a compact subinterval of $\hat{\Bbb{R}}$; and if the circle is preserved by the multiplication map $[m]$, the dynamics on the interval is given by the quotient of \eqref{Chebyshev} by the involution $[x]\mapsto [-x]$, i.e. a Chebyshev polynomial.  The dynamics on $\hat{\Bbb{R}}$ is illustrated in Figure \ref{fig:1}.

\begin{figure}[ht!]
\center
\includegraphics[width=13cm, height=5.6cm]{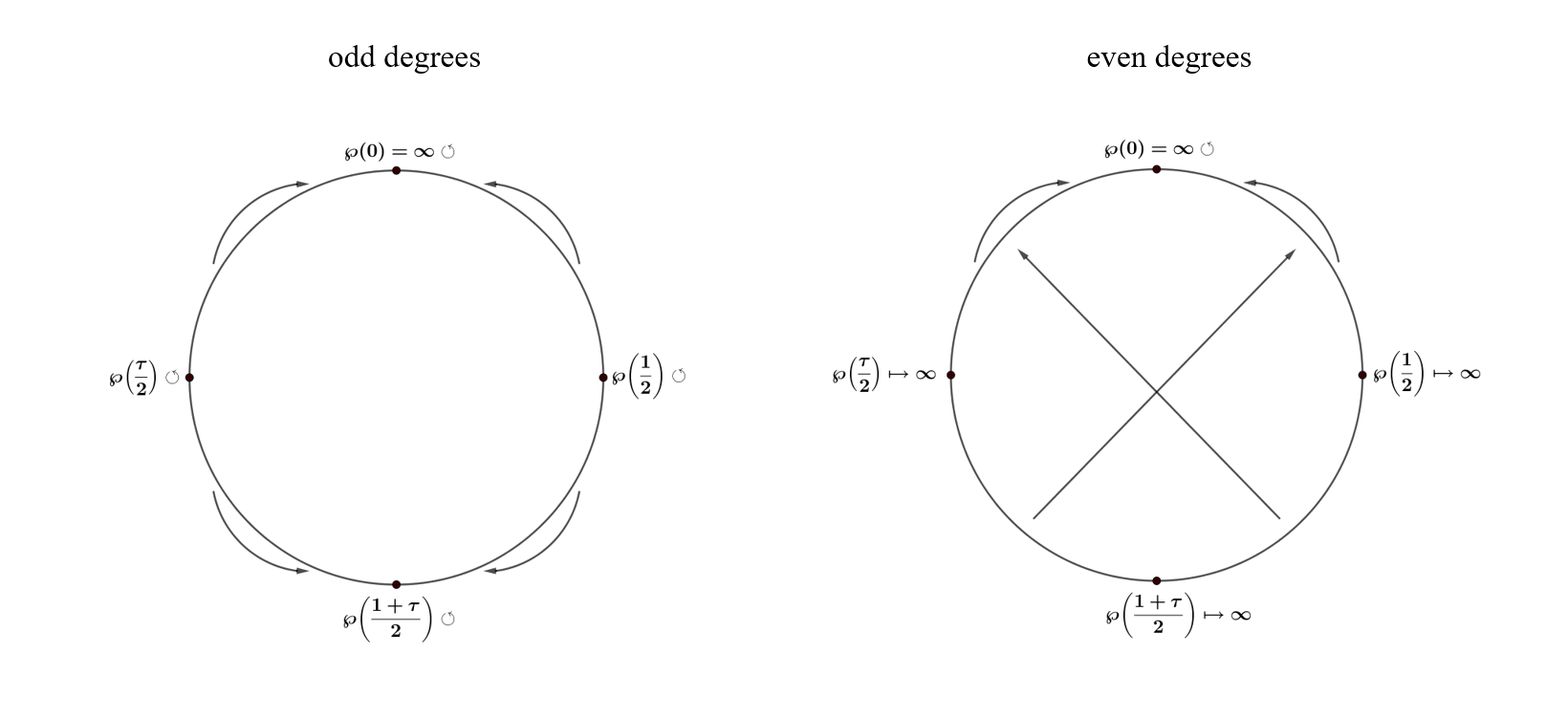}
\caption{The real dynamics of the map $f$ from \eqref{diagram1} in odd and even degrees. The images under $\pi$ of circles \eqref{circles} on the elliptic curve cut $\hat{\Bbb{R}}$ into four intervals determined by the values of the Weierstrass function at $2$-torsion points. Under $f$, each interval is either preserved or is mapped onto another interval. The dynamics on invariant intervals is that of a Chebyshev polynomial.}
\label{fig:1}
\end{figure}

\end{example}

\begin{example}
Let us calculate the real entropy of a couple of \textit{rigid} (not flexible) Latt\`es maps. It is known that any Latt\`es map semi-conjugate to an endomorphism of an elliptic curve $E$ via a morphism $E\rightarrow\Bbb{P}^1(\Bbb{C})$ whose degree (unlike the case of the diagram \eqref{diagram1}) is greater than two must  originate from elliptic curves with extra automorphisms, i.e. the elliptic curves corresponding to \textit{square} and \textit{hexagonal} lattices \cite{MR2348953}. 
\\
\indent 
It may be easily verified that after identifying 
$\Bbb{C}\big/\Bbb{Z}+\Bbb{Z}{\rm{i}}$ with the elliptic curve $y^2=x^3-x$, the multiplication map
$[1+{\rm{i}}]:[z]\mapsto\left[(1+{\rm{i}})z\right]$ factors through the quotient by the order four automorphism 
$[z]\mapsto[{\rm{i}}z]$ and induces the rational map $f(z):=-\frac{1}{4}\left(z+\frac{1}{z}-2\right)$ which fits into the  commutative diagram below.
\begin{equation*}
\xymatrixcolsep{5pc}\xymatrix{\left\{y^2=x^3-x\right\}=\Bbb{C}\big/\Bbb{Z}+\Bbb{Z}{\rm{i}} \ar[d]^{(x,y)\mapsto x^2}\ar[r]^{[1+{\rm{i}}]} & \left\{y^2=x^3-x\right\}=\Bbb{C}\big/\Bbb{Z}+\Bbb{Z}{\rm{i}} \ar[d]^{(x,y)\mapsto x^2}\\
\Bbb{P}^1(\Bbb{C})\ar[r]^{f}& \Bbb{P}^1(\Bbb{C})}
\end{equation*} 
The second iterate of $f$ would be a finite quotient of the endomorphism $[1+{\rm{i}}]^{\circ 2}=[2{\rm{i}}]$. But in 
the Legendre form $y^2=x^3-x$, the automorphism defined via the multiplication by ${\rm{i}}$ is just  
$(x,y)\mapsto (-x,\pm{\rm{i}}y)$; so $f^{\circ 2}$ is in fact induced via the multiplication by two endomorphism and hence, based on the discussion in Example \ref{Lattes}, we deduce that:
$$h_\Bbb{R}(f)=\frac{1}{2}\,h_\Bbb{R}(f^{\circ 2})=\frac{1}{2}\,\log(2)=\log(\sqrt{2}).$$ 
We finish this example with the entropy calculation for a real map which is the quotient of an endomorphism of the hexagonal elliptic curve $\Bbb{C}\big/\Bbb{Z}+\Bbb{Z}\omega$ where 
$\omega:={\rm{e}}^{\frac{2\pi{\rm{i}}}{3}}$ is a primitive third root of unity. This can be written as $y^2=x^3+1$ with the multiplication map 
$[\omega]:[z]\mapsto[\omega z]$ identified with the automorphism $(x,y)\mapsto(\omega x,y)$ of this algebraic curve. Carrying out the subtraction $(x,y)-(\omega x,y)$ results in a formula for the degree three endomorphism 
$[1-\omega]:[z]\mapsto\left[(1-\omega)z\right]$ and the commutative diagram below.
\begin{equation*}
\xymatrixcolsep{5pc}\xymatrix{\left\{y^2=x^3+1\right\}=\Bbb{C}\big/\Bbb{Z}+\Bbb{Z}\omega
\ar@/_2pc/[ddd]_{(x,y)\mapsto x}\ar[d]^{(x,y)\mapsto y}\ar[r]^{[1-\omega]} & 
\left\{y^2=x^3+1\right\}=\Bbb{C}\big/\Bbb{Z}+\Bbb{Z}\omega \ar@/^2pc/[ddd]^{(x,y)\mapsto x}\ar[d]_{(x,y)\mapsto y}\\
\Bbb{P}^1(\Bbb{C})\ar[r]^{y\mapsto\frac{\sqrt{3}{\rm{i}}}{9}\frac{y^3-9y}{y^2-1}}\ar[d]^{y\mapsto y^2}& \Bbb{P}^1(\Bbb{C})\ar[d]\ar[d]_{y\mapsto y^2}\\
\Bbb{P}^1(\Bbb{C})\ar[r]^{y\mapsto-\frac{1}{27}\frac{y(y-9)^2}{(y-1)^2}}& \Bbb{P}^1(\Bbb{C})\\
\Bbb{P}^1(\Bbb{C})\ar[u]_{x\mapsto x^3+1}\ar[r]^{x\mapsto \frac{1+\sqrt{3}{\rm{i}}}{2}\frac{x^3+4}{3x^2}}& \Bbb{P}^1(\Bbb{C})\ar[u]^{x\mapsto x^3+1}}
\end{equation*} 
Only the map appeared in the third row, i.e.  $f(z):=-\frac{1}{27}\frac{z(z-9)^2}{(z-1)^2}$, is real. But, iterating all maps once, we have $[1-\omega]^{\circ 2}=\left[(1-\omega)^2=-3\omega\right]$. Notice that multiplication by $\omega$ amounts to the automorphism $(x,y)\mapsto(\omega x,y)$ of $y^2=x^3+1$. Consequently, the map $x\mapsto x^3+1$ from the diagram above establishes a semi-conjugacy of finite degree between the flexible Latt\`es map induced by multiplication by $[-3]$ and $f^{\circ 2}$. The real entropy of the former has been calculated in Example \ref{Lattes} as $\log(3)$. We conclude that: 
$$h_\Bbb{R}(f)=\frac{1}{2}\,h_\Bbb{R}(f^{\circ 2})=\frac{1}{2}\,\log(3)=\log(\sqrt{3}).$$

\end{example}

\subsection{Maps of maximal real entropy}\label{maximal subsection}

The isentrope $h_\Bbb{R}=\log(d)$ is an interesting one to investigate. In this subsection we carry out a thorough analysis of this isentrope that culminates in the proof of Theorem \ref{temp3}.

\begin{theorem}\label{extremal classification}
Let $f\in\Bbb{R}(z)$ be of degree $d\geq 2$ with $\mu_f$ its measure of maximal entropy. 
Then the following are equivalent:
\begin{enumerate}[(a)]
\item $\mu_f(\hat{\Bbb{R}})>0$;
\item the Julia set $\mathcal{J}(f)$ is a subset of $\hat{\Bbb{R}}$;
\item $h_{\rm{top}}\left(f\restriction_{\hat{\Bbb{R}}}:\hat{\Bbb{R}}\rightarrow\hat{\Bbb{R}}\right)=\log(d)$.
\end{enumerate}
\end{theorem}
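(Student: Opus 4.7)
The plan is to prove the circular chain $(b)\Rightarrow(c)\Rightarrow(a)\Rightarrow(b)$. All three steps rest on well-known ergodic properties of the measure of maximal entropy $\mu_f$: it is supported exactly on $\mathcal{J}(f)$, it is exact (in particular ergodic), and it is the \emph{unique} $f$-invariant Borel probability measure on $\hat{\Bbb{C}}$ achieving entropy $\log(d)$.

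For $(b)\Rightarrow(c)$, recall that $h_{\rm top}(f\restriction_{\mathcal{J}(f)})=h_{\rm top}(f)=\log(d)$ by the standard theory. If $\mathcal{J}(f)\subseteq\hat{\Bbb{R}}$, then $f\restriction_{\mathcal{J}(f)}$ is a closed subsystem of $f\restriction_{\hat{\Bbb{R}}}$, which in turn is a closed subsystem of $f:\hat{\Bbb{C}}\to\hat{\Bbb{C}}$, so the entropy inequalities sandwich $h_{\rm top}(f\restriction_{\hat{\Bbb{R}}})$ between $\log(d)$ and $\log(d)$.

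For $(c)\Rightarrow(a)$, the restriction $f\restriction_{\hat{\Bbb{R}}}$ is a real-analytic, hence continuous and piecewise monotone, self-map of the circle. By the classical existence theorem of Hofbauer (complementing Misiurewicz--Szlenk), any such map possesses an invariant Borel probability measure $\nu$ on $\hat{\Bbb{R}}$ whose metric entropy equals the topological entropy, namely $h_\nu(f\restriction_{\hat{\Bbb{R}}})=\log(d)$. Viewing $\nu$ as a measure on $\hat{\Bbb{C}}$ supported in $\hat{\Bbb{R}}$, its metric entropy as an $f$-invariant measure on the sphere is still $\log(d)=h_{\rm top}(f)$. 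Uniqueness of the measure of maximal entropy (Lyubich, Freire--Lopes--Ma\~n\'e) then forces $\nu=\mu_f$, and in particular $\mu_f(\hat{\Bbb{R}})=\nu(\hat{\Bbb{R}})=1>0$.

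For $(a)\Rightarrow(b)$, consider $E:=\hat{\Bbb{R}}\cap\mathcal{J}(f)$, which is closed. Since $f$ has real coefficients we have $f(\hat{\Bbb{R}})\subseteq\hat{\Bbb{R}}$, and trivially $f(\mathcal{J}(f))=\mathcal{J}(f)$; thus $f(E)\subseteq E$, i.e.\ $E\subseteq f^{-1}(E)$. Combined with $\mu_f(f^{-1}(E))=\mu_f(E)$ from $f$-invariance, this yields $E=f^{-1}(E)$ modulo $\mu_f$. Ergodicity of $\mu_f$ gives $\mu_f(E)\in\{0,1\}$; since $\mu_f$ is supported on $\mathcal{J}(f)$ one has $\mu_f(E)=\mu_f(\hat{\Bbb{R}})$, which by hypothesis is positive, forcing $\mu_f(E)=1$. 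So $\mu_f$ is supported in the closed set $\hat{\Bbb{R}}$, and as $\operatorname{supp}(\mu_f)=\mathcal{J}(f)$, we obtain $\mathcal{J}(f)\subseteq\hat{\Bbb{R}}$. The most delicate step in this plan is the existence of a measure of maximal entropy for $f\restriction_{\hat{\Bbb{R}}}$ in $(c)\Rightarrow(a)$ — this is not automatic for general continuous systems, but is supplied by the Hofbauer--Misiurewicz--Szlenk theory applied to the piecewise monotone structure of $f\restriction_{\hat{\Bbb{R}}}$.
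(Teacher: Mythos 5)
Your three implications $(b)\Rightarrow(c)$ and $(c)\Rightarrow(a)$ coincide with the paper's: the same sandwich of entropy inequalities for the first, and the same appeal to the Hofbauer existence theorem for measures of maximal entropy of multimodal maps, combined with uniqueness of $\mu_f$, for the second. The genuine divergence is in $(a)\Rightarrow(b)$. The paper argues topologically: it uses Montel's Theorem to show that $\bigcup_n f^{-n}(U)$ is $\mu_f$-full for every open $U$ meeting $\mathcal{J}(f)$, then a Baire-category intersection to produce a point of $\hat{\Bbb{R}}$ with dense forward orbit in $\mathcal{J}(f)$, and concludes from forward-invariance and closedness of $\hat{\Bbb{R}}$. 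You instead argue measure-theoretically: you observe that $E=\hat{\Bbb{R}}\cap\mathcal{J}(f)$ is (mod $\mu_f$) a completely invariant set, invoke ergodicity of $\mu_f$ to force $\mu_f(E)\in\{0,1\}$, and conclude $\mathcal{J}(f)=\operatorname{supp}\mu_f\subseteq\hat{\Bbb{R}}$. Your route is shorter and arguably cleaner, at the cost of calling on one more piece of the Lyubich/Freire--Lopes--Ma\~n\'e theory (exactness/ergodicity of $\mu_f$) that the paper doesn't explicitly use; in exchange, the paper's Montel--Baire argument is more self-contained and also furnishes a real point with dense orbit in $\mathcal{J}(f)$, a fact of independent interest. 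Both arguments are correct.
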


\begin{proof}
It is clear that (b)$\Rightarrow$(a),(c); because if $\mathcal{J}(f)\subseteq\hat{\Bbb{R}}$, using 
${\rm{supp}}(\mu_f)=\mathcal{J}(f)$:
$$h_{\rm{top}}\left(f\restriction_{\hat{\Bbb{R}}}\right)
\geq h_{\rm{top}}\left(f\restriction_{\mathcal{J}(f)}\right)
\geq h_{\mu_f}\left(f\restriction_{\mathcal{J}(f)}\right) 
=h_{\mu_f}\left(f:\hat{\Bbb{C}}\rightarrow\hat{\Bbb{C}}\right)=h_{\rm{top}}\left(f:\hat{\Bbb{C}}\rightarrow\hat{\Bbb{C}}\right)=\log(d).$$
Next, suppose  $\mu_f(\hat{\Bbb{R}})>0$. According to Montel's theorem, for any open neighborhood
$U\subseteq\hat{\Bbb{C}}$ of a point of  $\mathcal{J}(f)$, $\bigcup_nf^{-\circ n}(U)$ includes a dense open subset of  
$\mathcal{J}(f)$. 
We use the properties of the measure of maximal entropy to show that this union is  of full measure with respect to $\mu_f$. It is known that $\mu_f$ is ergodic and satisfies $f^{*}\mu_f=d\,.\,\mu_f$ \cite{MR736568}. The union $\bigcup_nf^{-\circ n}(U)$ is backward-invariant, so must be of zero measure if it is not of full measure. But this measure zero set is an open subset of the Julia set; thus there is an iterate of $f$ that maps it onto the set $\mathcal{J}(f)$  with full measure 
\cite[Corollary 14.2]{MR2193309}.  This contradicts $f^{*}\mu_f=d\,.\,\mu_f$; we deduce that 
$\mu_f\left(\bigcup_nf^{-\circ n}(U)\right)=1$.
Now fixing  a countable collection of open sets 
$\{U_m\}_m$ for which $\left\{U_m\cap\mathcal{J}(f)\right\}_m$ is an open basis for the topology of $\mathcal{J}(f)$
and then taking the intersection over $U_m$'s,
we arrive at the full measure subset $\bigcap_m\bigcup_nf^{-\circ n}(U_m)$ which by the Baire category theorem contains a dense subset of 
$\mathcal{J}(f)$. Furthermore, the forward iterates of any of its points form a dense subset of  $\mathcal{J}(f)$. 
As  $\mu_f(\hat{\Bbb{R}})>0$, this full measure subset  intersects $\hat{\Bbb{R}}$:
There is a point on $\hat{\Bbb{R}}$ whose orbit is dense in $\mathcal{J}(f)$. But 
$\hat{\Bbb{R}}$ is closed and forward-invariant so must contain the whole $\mathcal{J}(f)$. \\
\indent At last, we show 
(c)$\Rightarrow$(b) that will yields (c)$\Rightarrow$(a) because we have already established (a)$\Leftrightarrow$(b). 
To this end, we invoke the main result of the article \cite{MR599481} indicating that a multimodal transformation of an interval with positive entropy admits a measure of maximal entropy. 
In that reference, multimodal maps can be discontinuous at turning points. Thus, identifying  $\hat{\Bbb{R}}\cong S^1$
with $[0,1)$ via the bijection $x\mapsto {\rm{e}}^{2\pi{\rm{i}}x}$, the same holds for continuous circle maps of positive entropy.  
So there is a Borel probability measure, say $\nu$, on the circle $\hat{\Bbb{R}}$ with 
respect to which the metric entropy of 
$f\restriction_{\hat{\Bbb{R}}}:\hat{\Bbb{R}}\rightarrow\hat{\Bbb{R}}$
 coincides with its topological entropy $\log(d)$. Pushing forward via the inclusion $i:\hat{\Bbb{R}}\hookrightarrow\hat{\Bbb{C}}$, 
one gets a measure $i_*\nu$ on the Riemann sphere with respect to which the measure theoretic entropy of 
$f:\hat{\Bbb{C}}\rightarrow\hat{\Bbb{C}}$
 is at least $\log(d)$. So $i_*\nu$ is a measure of maximal entropy for this rational map too. The uniqueness of the measure of maximal entropy for rational maps established in \cite{MR736567} then implies that $i_*\nu=\mu_f$. Hence
$\mu_f(\hat{\Bbb{R}})=\nu(\hat{\Bbb{R}})=1$.
\end{proof}

Now that we know $\mathcal{J}(f)$ must be completely real once $h_{\Bbb{R}}(f)=\log(d)$, we will elaborate on the dynamics on the Julia set below; compare with \cite[Theorem 2]{MR2833563}.
\begin{theorem}\label{extremal classification 1}
Let $f\in\Bbb{R}(z)$ be a rational map of degree $d\geq 2$ with  $h_\Bbb{R}(f)=\log(d)$. Then precisely one of the followings holds:
\begin{enumerate}[(a)]
\item There is a real fixed point $p$ of multiplier $\lambda\in[-1,1]$ whose immediate basin of attraction $\mathcal{A}$ is the only Fatou component; and a compact interval $I\subset\hat{\Bbb{R}}$ with 
$f^{-1}(I)\subsetneq I$ whose complement $\hat{\Bbb{R}}-I$ is the largest open interval included in $\mathcal{A}$. 
In this case, the Julia set is a Cantor set included in $I$.
\item The same as above but with $f^{-1}(I)=I$. In this case, $\mathcal{J}(f)$ coincides with the interval $I$ restricted to which $f\restriction_I:I\rightarrow I$ is a boundary-anchored $(d-1)$-modal map with surjective laps. The only Fatou component is the basin 
$\mathcal{A}=\hat{\Bbb{C}}-I$. 
\item The Fatou components are the open half planes and are immediate basins either for a pair of conjugate non-real attracting periodic points, or for a parabolic real fixed point  which is either of multiplier $-1$ or of multiplier $+1$ and multiplicity $3$.
In this case, the Julia set is $\hat{\Bbb{R}}$ and $f\restriction_{\hat{\Bbb{R}}}:\hat{\Bbb{R}}\rightarrow\hat{\Bbb{R}}$ is a $d$-sheeted unramified covering.  
\end{enumerate}
\end{theorem}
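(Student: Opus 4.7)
The plan is to leverage Theorem \ref{extremal classification}, which already forces $\mathcal{J}(f) \subseteq \hat{\Bbb{R}}$ under the hypothesis $h_{\Bbb{R}}(f) = \log(d)$. The two open half-planes $\Bbb{H}^{\pm} := \{\pm\,{\rm{Im}}(z) > 0\}$ then lie in the Fatou set, and since $f \in \Bbb{R}(z)$ the pair $\{\Bbb{H}^+, \Bbb{H}^-\}$ is either fixed or swapped by $f$. The whole argument proceeds along the dichotomy $\mathcal{J}(f) = \hat{\Bbb{R}}$ (case (c)) versus $\mathcal{J}(f) \subsetneq \hat{\Bbb{R}}$ (cases (a), (b)), classifying the Fatou components in each scenario through Sullivan's theorem.

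Assume first $\mathcal{J}(f) \subsetneq \hat{\Bbb{R}}$. Then $\hat{\Bbb{C}} \setminus \mathcal{J}(f)$ is connected: a Fatou real point in $\hat{\Bbb{R}} \setminus \mathcal{J}(f)$ bridges $\Bbb{H}^+$ and $\Bbb{H}^-$, so the Fatou set has a single, totally invariant component $\mathcal{A}$. Since every $y \in \mathcal{A}$ has all $d$ preimages in $\mathcal{A}$, the restriction $f\restriction_{\mathcal{A}}$ is a degree-$d$ branched cover, which rules out Siegel disks and Herman rings from Sullivan's list (both would require an injective first-return map). Hence $\mathcal{A}$ is the immediate basin of a non-repelling fixed point $p$. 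Complex conjugation preserves $\mathcal{A}$ and commutes with $f$, and uniqueness of the attracting or parabolic fixed point in an immediate basin forces $p = \bar{p} \in \hat{\Bbb{R}}$, so its multiplier $\lambda := f'(p)$ is real with $|\lambda| < 1$ in the attracting case and $\lambda = \pm 1$ in the parabolic case, giving $\lambda \in [-1, 1]$.

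To tell (a) and (b) apart, I would choose a forward-invariant closed arc $U \subset \hat{\Bbb{R}}$ through $p$ inside $\mathcal{A}$ (a contracting neighborhood of $p$ when $|\lambda| < 1$; a union of petal-intersections with $\hat{\Bbb{R}}$ when $|\lambda| = 1$) and set $I := \hat{\Bbb{R}} \setminus {\rm{int}}(U)$, shrinking $U$ so that $\hat{\Bbb{R}} - I$ is the largest open arc in $\mathcal{A}$; this arrangement gives $f^{-1}(I) \subseteq I$. The structure theorem for real Julia sets proved in the first appendix then restricts $\mathcal{J}(f)$ to be either a single closed arc in $I$ or a Cantor subset of $I$. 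When $\mathcal{J}(f) = I$, total invariance of $\mathcal{J}(f)$ upgrades the inclusion to $f^{-1}(I) = I$, and since every $y \in I$ has all $d$ preimages in $\mathcal{J}(f) = I$, the map $f\restriction_I: I \to I$ is a $d$-to-one continuous surjection, equivalently a boundary-anchored $(d-1)$-modal map with all laps surjective: case (b). When $\mathcal{J}(f)$ is a Cantor set, attraction of $p$ eventually pushes the orbit of any point of $I$ into $U$, yielding $x \in I$ with $f(x) \in U = \hat{\Bbb{R}} \setminus I$, so $f^{-1}(I) \subsetneq I$: case (a).

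For $\mathcal{J}(f) = \hat{\Bbb{R}}$, I would first argue that $f$ has no real critical points. At a real critical point $c$ with $f(z) = f(c) + A(z-c)^k + O((z-c)^{k+1})$ and $k \geq 2$, the local preimage equation $A(z-c)^k \approx y - f(c)$ has non-real solutions for real $y$ on at least one side of $f(c)$ (both sides when $k \geq 3$), contradicting total invariance $f^{-1}(\mathcal{J}(f)) = \hat{\Bbb{R}}$. Hence $f\restriction_{\hat{\Bbb{R}}}$ is a local homeomorphism, i.e., an unramified $d$-fold covering of the circle. The components $\Bbb{H}^{\pm}$ are simply connected, so not Herman rings, and a preimage count forces $f\restriction_{\Bbb{H}^+}$ or $f^2\restriction_{\Bbb{H}^+}$ to have degree $d$ respectively $d^2$, excluding Siegel disks. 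Thus $\Bbb{H}^{\pm}$ are basins of attracting or parabolic orbits. The attracting scenario yields a pair of conjugate non-real attracting periodic points: period $1$ if $\Bbb{H}^{\pm}$ are fixed, period $2$ if swapped. In the parabolic scenario, $\Bbb{H}^{\pm}$ are the only two attracting petals, and conjugation symmetry forces the parabolic periodic orbit to be a single real fixed point $p$; a petal count yields either $f'(p) = -1$ with the two petals swapped by $f$ (so $\Bbb{H}^{\pm}$ form a $2$-cycle) or $f'(p) = +1$ with a triple zero of $f(z) - z$ at $p$, giving two petals fixed by $f$ (so $\Bbb{H}^{\pm}$ are individually fixed).

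The main technical obstacle will be the precise modality and surjectivity claim in case (b), namely that $f\restriction_I$ is exactly $(d-1)$-modal with all $d$ laps surjective onto $I$; this should follow from counting the real critical points of $f$ inside $I$ and matching their number with $f^{-1}(I) = I$ and $\deg(f) = d$. A secondary subtlety is the exclusion of real parabolic $2$-cycles in case (c), which rests on the observation that $\Bbb{H}^{\pm}$ together exhaust all attracting petals and that complex conjugation pins the parabolic orbit down to a self-conjugate fixed point.
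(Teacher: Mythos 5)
Your overall route matches the paper's: invoke Theorem~\ref{extremal classification} to force $\mathcal{J}(f)\subseteq\hat{\Bbb{R}}$, split on $\mathcal{J}(f)=\hat{\Bbb{R}}$ versus $\mathcal{J}(f)\subsetneq\hat{\Bbb{R}}$, classify the one or two Fatou components via the no-wandering-domains classification, and then read off whether $\mathcal{J}(f)$ is the circle, an arc, or a Cantor set. Two of your local substitutions are valid and arguably cleaner than the paper's global versions: ruling out real critical points in case~(c) by a power-series analysis at the critical point rather than the paper's interval-counting, and killing Herman rings via simple connectivity of the half-planes rather than the paper's degree argument. You correctly identify the $(d-1)$-modality claim as the delicate point; the missing ingredient is a Riemann--Hurwitz count on the totally invariant disk $\mathcal{A}=\hat{\Bbb{C}}\setminus I$, which gives exactly $d-1$ critical points (with multiplicity) in $\mathcal{A}$ and hence at most $d-1$ in $I$, capping the lap number at $d$; the preimage/entropy count gives the matching lower bound, and surjectivity of each lap then falls out of the complete invariance of $I$.

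The genuine gap is in case~(a). The theorem asserts $f^{-1}(I)\subsetneq I$ for an interval $I\subset\hat{\Bbb{R}}$; since $I$ lies in the circle, this contains the nontrivial claim that $f^{-1}(I)$ has \emph{no non-real points}. Your argument (attraction of $p$ forces some $x\in I$ with $f(x)\in\hat{\Bbb{R}}\setminus I$) only delivers $f^{-1}(I)\cap\hat{\Bbb{R}}\subsetneq I$. A priori the preimage under $f:\hat{\Bbb{C}}\to\hat{\Bbb{C}}$ of a Fatou subinterval of $I$ could have arcs in the half-planes, and ruling this out takes work: the paper shows that a non-real preimage arc of a half-open interval $(y_0,y_0+\epsilon)$ with $y_0\in\mathcal{J}(f)$ must, together with its complex conjugate, close up at a real critical point $x_0\in f^{-1}(y_0)$, and then the non-isolatedness of $y_0$ in $\mathcal{J}(f)$ forces a non-real Julia point on one of the non-real ``arms'' of $f^{-1}$ near $x_0$, a contradiction. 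Without this step the stated inclusion $f^{-1}(I)\subsetneq I$ is not established. A secondary point: you invoke the Appendix~A structure theorem to conclude $\mathcal{J}(f)$ is a single arc or a Cantor set, but that theorem on its own allows countably many arcs together with a totally disconnected part; the paper instead argues directly, via Montel, that an arc component and a singleton component of $\mathcal{J}(f)$ cannot coexist (forward images of the interior of an arc never cover a singleton, yet they must cover $\mathcal{J}(f)$), and then uses the connected-or-uncountably-many-components dichotomy to get exactly the claimed trichotomy.
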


\begin{proof}
By Theorem \ref{extremal classification} the Julia set $\mathcal{J}(f)$ is contained in $\hat{\Bbb{R}}$. 
Recall that the Julia set is always either connected or has uncountably many connected components \cite[Corollary 4.15]{MR2193309}. So $\mathcal{J}(f)\subseteq\hat{\Bbb{R}}$  is either a non-degenerate subinterval of $\hat{\Bbb{R}}$, the whole circle or a disjoint union of (forcibly at most countably many) compact non-degenerate subintervals of the real circle with a (necessarily uncountable) closed totally disconnected subset of it. We claim that in the latter case there is not any subinterval and thus $\mathcal{J}(f)$ is a closed totally disconnected subset of $\hat{\Bbb{R}}$ and hence (given the fact that it has no isolated point) a Cantor set. Assume the contrary; $\mathcal{J}(f)$ admits both interval and singleton components. But $f$ takes a connected component of $\mathcal{J}(f)$ to another such component and, being a  non-constant analytic map, cannot collapse a non-degenerate interval to a point. So the forward iterates of an interval component never cover a singleton component. This is a contradiction since the former contains an open subset of $\hat{\Bbb{R}}$ and thus an open subset of  $\mathcal{J}(f)$,
and by Montel's theorem the union of forward images of any non-empty open subset of the Julia set is the whole Julia set.
Consequently, $\mathcal{J}(f)\subseteq\hat{\Bbb{R}}$ is either $\hat{\Bbb{R}}$, a subinterval of it or a Cantor set on it.\\
\indent
Since $\mathcal{J}(f)\subseteq\hat{\Bbb{R}}$, there are at most two Fatou components with equality if and only if 
$\mathcal{J}(f)$ coincides with $\hat{\Bbb{R}}$. Each Fatou component is periodic and cannot be a rotation domain due to the fact that there are only finitely many Fatou component. Hence, invoking the classification of Fatou components, each Fatou components of $f$ is the immediate basin of an attracting or parabolic periodic point $p$. If $p$ lies on $\hat{\Bbb{R}}$, its multiplier $\lambda$ belongs to $[-1,1]$. If $p$ is not real, its complex conjugate $\bar{p}$ determines a different Fatou component of the real map $f$ and so the Fatou components appear in a conjugate pair; hence $\mathcal{J}(f)=\hat{\Bbb{R}}$ and we are in the situation that (c) describes. Notice that $p,\bar{p}$ cannot be parabolic as they do not belong to $\mathcal{J}(f)=\hat{\Bbb{R}}$. For $p$ real, the only other case where $\mathcal{J}(f)$ coincides with $\hat{\Bbb{R}}$ is if $p$ is a parabolic point with the half planes as its immediate basins. The period of $p$ must be one: the Fatou components appear in conjugate pairs and as there are merely two of them, there cannot be a parabolic cycle of period larger than one. Having two (or equivalently more than one) parabolic basins of attraction is immediate when $\lambda=-1$ 
(\cite[Lemma 10.4]{MR2193309}), while requires the multiplicity of the fixed point to be $3$ when $\lambda=1$.\\
\indent
In parts (a) and (b) the Julia set is a proper closed subset of $\hat{\Bbb{R}}$ and hence there is a single Fatou component which is the basin of attraction $\mathcal{A}$ for a fixed point $p\in\hat{\Bbb{R}}$ of multiplier $\lambda\in[-1,1]$. The connected component of $\mathcal{A}\cap\hat{\Bbb{R}}$ around $p$ is a forward-invariant open interval which is the immediate basin of the fixed point $p$ of the real system $f\restriction_{\hat{\Bbb{R}}}:\hat{\Bbb{R}}\rightarrow\hat{\Bbb{R}}$. Thus its complement, denoted by $I$, is a compact subinterval of $\hat{\Bbb{R}}$ containing $\mathcal{J}(f)$ and satisfying 
$f^{-1}(I)\cap\hat{\Bbb{R}}\subseteq I$. We have the equality in (b), where $f^{-1}(I)=I$, while the inclusion is strict in (a). In the latter case where  $f^{-1}(I)\cap\hat{\Bbb{R}}\subsetneq I$, the proper subset $\mathcal{J}(f)$ of $I$ must be Cantor as it cannot be any smaller subinterval of $I$ because then, the connected component around $p$ of the intersection of the Fatou set with $\hat{\Bbb{R}}$  would be larger than $\hat{\Bbb{R}}-I$ contradicting the definition of $I$. We claim that the preimage of $I$ is totally real again and hence $f^{-1}(I)\subsetneq I$. Assume otherwise. The backward-invariant Cantor subset 
$\mathcal{J}(f)$ of $I$ is totally real and hence there must be a subinterval of the complement $I-\mathcal{J}(f)\subset\hat{\Bbb{R}}$ whose preimage is not completely real. This can be written as $\left(y_0,y_0+\epsilon_0\right)$
or $\left(y_0-\epsilon_0,y_0\right)$ where $\epsilon_0>0$ and $y_0\in\mathcal{J}(f)$. There is no loss of generality in going with the former and also taking $\epsilon_0>0$ so small that there is no critical value in $\left(y_0,y_0+\epsilon_0\right)$. The preimage of $(y_0,y_0+\epsilon_0)$ under the degree $d$ rational map $f:\hat{\Bbb{C}}\rightarrow\hat{\Bbb{C}}$ then consists of $d$ disjoint curves on the Riemann sphere each homeomorphic to the open interval $(0,1)$ and either contained in 
$\hat{\Bbb{R}}$ or completely away from it. The non-real ones among them must appear in conjugate pairs as $f$ is with real coefficients. But the closures of members of any such  pair must have a point of 
$f^{-1}(y_0)\subset\mathcal{J}(f)\subset\hat{\Bbb{R}}$ in common, namely a real Julia point $x_0$. We deduce that $f$ is not locally injective at $x_0$; in particular, $y_0$ is a critical value of $f$. Switching to the interval $(y_0-\epsilon_1,y_0+\epsilon_1)$ symmetric with respect to $y_0$ with $0<\epsilon_1<\epsilon_0$ so small that this interval has no other critical value other than $y_0$, the component $S$ of $f^{-1}\left((y_0-\epsilon_1,y_0+\epsilon_1)\right)$
around $x_0\in f^{-1}(y_0)$ is a ``star-shaped'' subset of the plane symmetric with respect to the real axis with ``arms'' homeomorphic to open intervals that are mapped onto $(y_0-\epsilon_1,y_0+\epsilon_1)$ via $f$ and, aside from the one which is a real open subinterval containing $x_0$, the rest are curves that intersect $\hat{\Bbb{R}}$ only at $x_0$. Such ``non-real arms'' of
 $S\subseteq f^{-1}\left((y_0-\epsilon_1,y_0+\epsilon_1)\right)$
 exist since $x_0$ is a critical point of $f$. But $y_0\in\mathcal{J}(f)\subset\hat{\Bbb{R}}$ cannot be an isolated point of 
 $\mathcal{J}(f)$, so every arm of the star $S$ has a Julia point distinct from $x_0$. Therefore, there are non-real Julia points; a contradiction.\\
\indent
To finish the proof, we need to address the extra information that parts (b) and (c) provide for cases where  $\mathcal{J}(f)$ is ``smooth'', namely is an interval or the whole circle. Since the Julia set is  backward-invariant, 
$f\restriction_{\mathcal{J}(f)}:{\mathcal{J}(f)}\rightarrow{\mathcal{J}(f)}$ is a degree $d$ ramified covering whose ramification structure may be readily investigated. First, suppose $\mathcal{J}(f)=\hat{\Bbb{R}}$. If there is a critical point of  $f$ on  ${\hat{\Bbb{R}}}$, then
$$
\hat{\Bbb{R}}-f^{-1}\left(\left\{v\in\hat{\Bbb{R}}| \,v \text{ a critical value of }f:\hat{\Bbb{C}}\rightarrow\hat{\Bbb{C}}\right\}\right)
$$
is a disjoint union of 
$$\#f^{-1}\left(\left\{v\in\hat{\Bbb{R}}| \,v \text{ a critical value of }f:\hat{\Bbb{C}}\rightarrow\hat{\Bbb{C}}\right\}\right)$$
open intervals restricted to which $f$ yields a covering map onto 
$$\hat{\Bbb{R}}-\left\{v\in\hat{\Bbb{R}}| \,v \text{ a critical value of }f:\hat{\Bbb{C}}\rightarrow\hat{\Bbb{C}}\right\};$$
a union of 
$$\#\left\{v\in\hat{\Bbb{R}}| \,v \text{ a critical value of }f:\hat{\Bbb{C}}\rightarrow\hat{\Bbb{C}}\right\}$$
intervals. So $f$ takes each of the former intervals bijectively onto one of the latter. But $\mathcal{J}(f)=\hat{\Bbb{R}}$ is backward-invariant under the degree $d$ map $f$, so each of the latter intervals has to be covered by $d$ of the former ones. 
This  happens only when 
\small
$$\#f^{-1}\left(\left\{v\in\hat{\Bbb{R}}| \,v \text{ a critical value of }f:\hat{\Bbb{C}}\rightarrow\hat{\Bbb{C}}\right\}\right)
=d\left(\#\left\{v\in\hat{\Bbb{R}}| \,v \text{ a critical value of }f:\hat{\Bbb{C}}\rightarrow\hat{\Bbb{C}}\right\}\right);$$
\normalsize
that is, when $f$ admits $d$ points over each critical value lying on ${\hat{\Bbb{R}}}$,
which is absurd and therefore, 
$f\restriction_{\mathcal{J}(f)}:{\mathcal{J}(f)}\rightarrow{\mathcal{J}(f)}$ is a degree $d$ (unramified) covering of circles.   Finally, when $\mathcal{J}(f)$ is an interval, $f\restriction_{\mathcal{J}(f)}:\mathcal{J}(f)\rightarrow\mathcal{J}(f)$ is a multimodal interval map of topological entropy $\log(d)$. By \eqref{lap number} this entropy cannot be realized if the lap number is less than $d$. Therefore, 
$f\restriction_{\mathcal{J}(f)}$ is $(d-1)$-modal. 
On each of its $d$ laps $f\restriction_{\mathcal{J}(f)}$ restricts to a surjective map onto $\mathcal{J}(f)$
because otherwise, there would be a non-empty open subinterval of $\mathcal{J}(f)$ over points of which 
$f\restriction_{\mathcal{J}(f)}$ admits less than $d$ preimages. The same must hold for fibers of $f:\hat{\Bbb{C}}\rightarrow\hat{\Bbb{C}}$ above these points due to the backward-invariance of $\mathcal{J}(f)$ contradicting 
$\deg f=d$. 
The map has to be boundary-anchored too: if $f$ takes an endpoint of the interval $\mathcal{J}(f)$ to a point of its interior, by continuity, it maps points outside the interval sufficiently close to that endpoint inside $\mathcal{J}(f)$; a contradiction since the points outside the interval are Fatou points.
\end{proof}

\begin{proof}[Proof of Theorem \ref{temp3}]
As proved in Theorem \ref{extremal classification}, $h_\Bbb{R}(f)=\log(d)$ implies $\mathcal{J}(f)\subseteq\hat{\Bbb{R}}$. The detailed discussion in the subsequent Theorem \ref{extremal classification 1} showed that $\mathcal{J}(f)$ must be either a Cantor set on the real circle, a subinterval of it or the entirety of $\hat{\Bbb{R}}$, as outlined in parts (a), (b) and (c) of  
Theorem \ref{extremal classification 1} respectively.  Replacing $f\in\Bbb{R}(z)$ with its conjugate
$$
z\mapsto\left(z\mapsto\frac{z-{\rm{i}}}{z+{\rm{i}}}\right)\circ f\circ\left(z\mapsto\frac{z-{\rm{i}}}{z+{\rm{i}}}\right)^{-1},
$$
it is no loss of generality to assume that the degree $d$ rational map $f$ preserves the unit circle $|z|=1$ instead with its Julia set a subset of the circle. This is  in particular the case when $f$ restricts to a degree $\pm d$ self-cover of the unit circle  since then the closed backward-invariant set $|z|=1$ must contain $\mathcal{J}(f)$ by  Montel's theorem. 
Such a map $f$ can be described by a Blaschke product 
$$
{\rm{e}}^{2\pi{\rm{i}}c}\prod_{i=1}^{d}\left(\frac{z-a_i}{1-\bar{a_i}z}\right)\quad 
\left(|a_1|,\dots,|a_d|<1;\,  \, c\in\Bbb{R}/\Bbb{Z}\right)
$$
as in \eqref{Blaschke temp1} or its post-composition with $z\mapsto\frac{1}{z}$, based on whether the degree of the induced circle map is $+d$ or $-d$.  
There is not any critical point on the unit circle and hence, $\mathcal{J}(f)$ cannot be a subinterval of the circle because Theorem \ref{extremal classification 1}(b) indicates that in such a situation, $\mathcal{J}(f)$ contains critical points. Also notice that  any degree $d$ map with the whole unit circle as its Julia set is one of these Blaschke products due to the fact that  in such a situation
$f\restriction_{\left\{|z|=1\right\}}:\left\{|z|=1\right\}\rightarrow\left\{|z|=1\right\}$
must be a $d$-sheeted covering according to Theorem \ref{extremal classification 1}(c). We can next address parts (i) and (ii) of Theorem \ref{temp3}: the map $f$ given by 
${\rm{e}}^{2\pi{\rm{i}}c}\prod_{i=1}^{d}\left(\frac{z-a_i}{1-\bar{a_i}z}\right)$
(or its post-composition with $z\mapsto\frac{1}{z}$) is hyperbolic if and only if it either has an attracting fixed point on $|z|=1$
-- in which case $\mathcal{J}(f)$ would be a Cantor subset of $|z|=1$ -- or has a  fixed point (necessarily attracting by Schwartz Lemma) inside the open unit disk -- in which case $\mathcal{J}(f)$ coincides with the unit circle. These are the possibilities appeared respectively in
(a) and (c) of Theorem \ref{extremal classification 1} with the real circle in place of the unit circle.  Equation \eqref{temp equation 2}
means that the point ${\rm{e}}^{2\pi{\rm{i}}\theta_0}$ on the unit circle is fixed by 
$f(z)={\rm{e}}^{2\pi{\rm{i}}c}\prod_{i=1}^{d}\left(\frac{z-a_i}{1-\bar{a_i}z}\right)$
and \eqref{temp equation 1} guarantees that it is attracting: a simple logarithmic differentiation shows that on the unit circle
(cf. \cite[Proposition 1]{MR703758})
$$
\frac{f'(z)}{f(z)}=\frac{1}{z}\sum_{i=1}^d\frac{1-|a_i|^2}{|z-a_i|^2};
$$
and the multiplier of ${\rm{e}}^{2\pi{\rm{i}}\theta_0}$ is thus of magnitude 
$\sum_{i=1}^d\frac{1-|a_i|^2}{|{\rm{e}}^{2\pi{\rm{i}}\theta_0}-a_i|^2}$. As for the part (ii) of Theorem \ref{temp3} where $f$ possesses a fixed point in the open unit disk, one can put the fixed point at the origin via conjugation with a suitable biholomorphism of the open unit disk. Therefore, there is a conjugacy that preserves the unit circle and transforms $f(z)$ to 
another Blaschke product of the form 
$$
{\rm{e}}^{2\pi{\rm{i}}c'}z.\prod_{i=1}^{d-1}\left(\frac{z-a'_i}{1-\bar{a'_i}z}\right).
$$
One can kill the only degree of freedom left by conjugating with a rotation of the unit circle and getting rid of ${\rm{e}}^{2\pi{\rm{i}}c'}$. 
This leaves us with a Blaschke product of the form 
$$
z.\prod_{i=1}^{d-1}\left(\frac{z-a''_i}{1-\bar{a''_i}z}\right).
$$
(or its composition with $z\mapsto\frac{1}{z}$ that has $0\mapsto\infty\mapsto 0$ as an attracting $2$-cycle). \\
\indent
Part (iii) of Theorem \ref{temp3} is straightforward. (We have formulated part (iii) for the original real circle as calculating the multiplicity of a parabolic point of the Blaschke product \eqref{Blaschke temp1} on the unit circle leads to complicated algebraic expressions.)  Invoking  Theorem \ref{extremal classification 1}(c), the degree $d$ map $f\in\Bbb{R}(z)$ here must have non-real critical points and a parabolic fixed point of multiplier $\epsilon=\pm1$ on $\hat{\Bbb{R}}$ whose multiplicity should be $3$ if $\epsilon=+1$.  Without any loss of generality, we may assume that the fixed point is $\infty$. Therefore, $f$ can be written as 
$f(z)=\epsilon\left(z+\frac{P(z)}{Q(z)}\right)$ with $P,Q$ coprime polynomials satisfying $\deg P\leq d-1$ and $\deg Q=d-1$.
For $\epsilon=1$ we moreover want the multiplicity of the fixed point $z=\infty$ of $f(z)=z+\frac{P(z)}{Q(z)}$ to be more than $2$ and it is not hard to see that this is equivalent to $\deg P<d-1$. \\
\indent 
Finally, we turn to part (iv) of Theorem \ref{temp3}. Given a degree $d$ map $f$ with $\mathcal{J}(f)$ a subinterval of $\hat{\Bbb{R}}$, we claim that there is a lift of $f$ via a two-sheeted branched covering of $\pi:\hat{\Bbb{C}}\rightarrow\hat{\Bbb{C}}$ to another rational map $g$ whose Julia set is an analytic circle on the Riemann sphere. This will be proved separately in Proposition \ref{Joukowsky}. After suitable changes of coordinates in the domain and the co-domain of $\pi$, we may assume that $\mathcal{J}(g)$ is either the unit circle or the real circle and the degree two map $\mathcal{J}(g)\rightarrow\mathcal{J}(f)$ induced by the semi-conjugacy $\pi$ is either 
\begin{equation}\label{semi-1}
\begin{cases}
\left\{|z|=1\right\}\rightarrow [-2,2]\\
z\mapsto z+\frac{1}{z}
\end{cases};
\end{equation}
or 
\begin{equation}\label{semi-2}
\begin{cases}
\hat{\Bbb{R}}\rightarrow [0,+\infty]\\
z\mapsto z^2
\end{cases}.
\end{equation}
The map $g$  then fits in the description provided by parts (ii) and (iii) of Theorem \ref{temp3}. In the latter case (the non-hyperbolic situation) $g$ is in the form of  $\epsilon\left(z+\frac{P(z)}{Q(z)}\right)$ with 
$\hat{\Bbb{R}}$ as its Julia set and  the semi-conjugacy is the map $z\mapsto z^2$ from \eqref{semi-2}. Hence $f$ is the quotient of $\epsilon\left(z+\frac{P(z)}{Q(z)}\right)$ by the action of $z\mapsto -z$. One can easily verify that this Möbius transformation commutes with $\epsilon\left(z+\frac{P(z)}{Q(z)}\right)$ if and only if one of the polynomials $P(z),Q(z)$ is even and the other is odd. In the hyperbolic case, the semi-conjugacy is the Joukowsky map $z\mapsto z+\frac{1}{z}$ from \eqref{semi-1} and thus $f$ must be the quotient of a Blaschke product of the form \eqref{Blaschke temp1} that has the unit circle as its Julia set by the action of 
$z\mapsto\frac{1}{z}$. Following previous discussions regarding the part (ii) of Theorem \ref{temp3}, $g$ must have an attracting fixed point inside the unit disk which can assumed to be zero; and hence is in the form of 
$$
{\rm{e}}^{2\pi{\rm{i}}c}z.\prod_{i=1}^{d-1}\left(\frac{z-a_i}{1-\bar{a_i}z}\right).
$$
It is not hard to check that the map above commutes with $z\mapsto\frac{1}{z}$ if and only if ${\rm{e}}^{2\pi{\rm{i}}c}=\pm 1$ and the points $a_1,\dots,a_{d-1}$ of the open unit disk are located symmetrically with respect to the real axis; hence \eqref{Blaschke temp3}.
\end{proof}

Next, we have the proposition below that concludes the proof of Theorem \ref{temp3} and furthermore, provides a description of rational maps with interval Julia sets.
\begin{proposition}\label{Joukowsky}
Given a rational map $f$ of degree $d$ whose Julia set is the interval $[-2,2]$, there exists a  rational map $g$ of degree $d$ with the unit circle as its Julia set which is semi-conjugate to $f$  
\begin{equation}\label{semi-conjugacy}
\mathfrak{j}\circ g=f\circ\mathfrak{j}
\end{equation}
via the Joukowsky  map 
\begin{equation}\label{alpha}
\mathfrak{j}(z):=z+\frac{1}{z}.
\end{equation}
\end{proposition}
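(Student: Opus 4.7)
The plan is to construct $g$ first on the exterior $\{|z|>1\}$ using the biholomorphism $\mathfrak{j}|_{\{|z|>1\}}:\{|z|>1\}\rightarrow\hat{\Bbb{C}}-[-2,2]$, and then extend across the unit circle by Schwarz reflection. First I would observe that $f$ must have real coefficients: the hypothesis $f([-2,2])\subseteq[-2,2]$ forces the meromorphic identity $f(\bar{z})=\overline{f(z)}$ on the infinite set $[-2,2]$, hence everywhere. Since $\mathcal{J}(f)=[-2,2]$ is completely invariant, the Fatou set of $f$ equals $U:=\hat{\Bbb{C}}-[-2,2]$ and is $f$-invariant; both branches of $\mathfrak{j}^{-1}$ give biholomorphisms from $U$ onto $\{|z|>1\}$ and onto $\{|z|<1\}$. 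I then set
$$g:=\left(\mathfrak{j}|_{\{|z|>1\}}\right)^{-1}\circ f\circ\mathfrak{j}|_{\{|z|>1\}}$$
on $\{|z|>1\}$, producing a proper holomorphic self-map of degree $d$ that extends continuously to the closed complement of the open unit disk and sends $\{|z|=1\}$ into itself because $f([-2,2])=[-2,2]$.

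Next I would extend $g$ to the open unit disk by the Schwarz reflection formula $g(z):=1/\overline{g(1/\bar{z})}$, which is holomorphic on $\{|z|<1\}$ and agrees with the exterior piece along the unit circle. By Painlev\'e's theorem the combined map is holomorphic across $\{|z|=1\}$, so $g$ is meromorphic on $\hat{\Bbb{C}}$ and thus rational; the relation $\deg(\mathfrak{j})\cdot\deg(g)=\deg(f)\cdot\deg(\mathfrak{j})$ forces $\deg(g)=d$. The semi-conjugacy \eqref{semi-conjugacy} holds on $\{|z|>1\}$ by construction; on $\{|z|<1\}$, using $\mathfrak{j}(1/\bar{z})=\overline{\mathfrak{j}(z)}$ together with the reflection formula, a short calculation yields
$$\mathfrak{j}(g(z))=\overline{\mathfrak{j}(g(1/\bar{z}))}=\overline{f(\mathfrak{j}(1/\bar{z}))}=\overline{f(\overline{\mathfrak{j}(z)})}=\tilde{f}(\mathfrak{j}(z)),$$
which coincides with $f(\mathfrak{j}(z))$ precisely because $f=\tilde{f}$ by the first step.

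It then remains to verify $\mathcal{J}(g)=\{|z|=1\}$. The decomposition $\hat{\Bbb{C}}=\{|z|<1\}\sqcup\{|z|=1\}\sqcup\{|z|>1\}$ is preserved by $g$, and each of the two open pieces is dynamically conjugate via the appropriate branch of $\mathfrak{j}^{-1}$ to $f:U\rightarrow U$. By Theorem \ref{extremal classification 1}(b), $U$ is the sole Fatou component of $f$, being the basin of a real fixed point of multiplier in $[-1,1]$. Pulling back through the two sheets, both $\{|z|<1\}$ and $\{|z|>1\}$ therefore lie in the Fatou set of $g$, each forming a separate Fatou component carrying one of the two $\mathfrak{j}$-preimages of $f$'s distinguished fixed point. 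A hypothetical Fatou point on $\{|z|=1\}$ would sit in an open Fatou neighborhood intersecting both of these regions, collapsing them into a single Fatou component and contradicting that they host distinct basins; hence $\mathcal{J}(g)=\{|z|=1\}$.

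The main obstacle is reconciling the holomorphic deck transformation $z\mapsto 1/z$ of $\mathfrak{j}$ with the antiholomorphic involution $z\mapsto 1/\bar{z}$ driving the Schwarz reflection: without the reality of $f$, the reflected piece would satisfy $\mathfrak{j}\circ g=\tilde{f}\circ\mathfrak{j}$ instead of $\mathfrak{j}\circ g=f\circ\mathfrak{j}$, and no single-valued rational lift of $f$ through $\mathfrak{j}$ would exist. This is why extracting $f\in\Bbb{R}(z)$ from $\mathcal{J}(f)\subseteq\hat{\Bbb{R}}$ at the outset is the pivotal step that makes the rest of the argument go through.
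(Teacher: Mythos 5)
Your construction is correct in outline and takes a genuinely different, more conceptual route than the paper. The paper builds $g$ explicitly \emph{on the unit circle first}: it indexes the $d$ laps of $f\restriction_{[-2,2]}$, cuts the circle into $2d$ arcs $C_1,\dots,C_{2d}$, defines $g$ piecewise by choosing between $\mathfrak{j}_+^{-1}$ and $\mathfrak{j}_-^{-1}$ on each arc, checks the resulting circle map is a continuous $d$-fold cover, extends holomorphically to a punctured neighborhood using local inverses of $\mathfrak{j}$, then solves $\mathfrak{j}\circ g=f\circ\mathfrak{j}$ globally on $\{|z|\neq 1\}$ and removes the finitely many isolated singularities. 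You instead construct $g$ directly on $\{|z|>1\}$ by conjugating $f\restriction_{\mathcal{F}(f)}$ through the biholomorphism $\mathfrak{j}\restriction_{\{|z|>1\}}$, appeal to properness of $f\restriction_{\mathcal{F}(f)}$ (which follows from complete invariance of $\mathcal{J}(f)$), and then Schwarz-reflect. This sidesteps all of the arc combinatorics. Your preliminary observation that $\mathcal{J}(f)=[-2,2]$ forces $f\in\Bbb{R}(z)$ via the identity theorem is also a useful clarification that the paper leaves implicit.

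Two spots deserve tightening. First, the reason the exterior piece of $g$ extends continuously to $\{|z|\geq 1\}$ and maps the circle to itself is not really ``because $f([-2,2])=[-2,2]$''; the two-valued branch of $\mathfrak{j}^{-1}$ on $[-2,2]$ means that statement alone does not give boundary continuity. The correct justification is the one your word ``proper'' points at: after conjugating by $z\mapsto 1/z$, a proper holomorphic self-map of the unit disk of finite degree is a finite Blaschke product, which extends rationally and automatically satisfies the reflection symmetry $B(1/\bar z)=1/\overline{B(z)}$; this is precisely what makes your Schwarz reflection consistent with the exterior piece. Spell this out. Second, your argument for $\mathcal{J}(g)=\{|z|=1\}$ assumes the fixed point $p$ has two distinct $\mathfrak{j}$-preimages landing one inside and one outside the circle. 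By Theorem \ref{extremal classification 1}(a),(b) the multiplier of $p$ lies in $[-1,1]$, so $p$ may be parabolic; then $p\in\mathcal{J}(f)=[-2,2]$, so either $p=\pm 2$ (whose $\mathfrak{j}$-preimage is the single point $\pm 1$) or $p\in(-2,2)$ (whose two preimages lie \emph{on} the circle). In these cases the ``two preimages carried by the two regions'' argument does not apply as stated. The conclusion is still correct — $\{|z|<1\}$ and $\{|z|>1\}$ are then distinct parabolic petals of the same parabolic point(s) of $g$, hence distinct Fatou components — but this case needs to be treated separately rather than folded into the attracting case.
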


\begin{proof}

We denote the restrictions of \eqref{alpha} to upper and lower unit semi-circles by $\mathfrak{j}_+$ and $\mathfrak{j}_-$.  
\begin{equation}\label{alpha+-}
\begin{cases}
\mathfrak{j}_+:\left\{e^{2\pi{\rm{i}}\theta}\,|\, 0\leq\theta\leq\pi\right\}\rightarrow[-2,2]\\
e^{2\pi{\rm{i}}\theta}\mapsto 2\cos(\theta)
\end{cases};
\quad 
\begin{cases}
\mathfrak{j}_-:\left\{e^{2\pi{\rm{i}}\theta}\,|\, \pi\leq\theta\leq 2\pi\right\}\rightarrow[-2,2]\\
e^{2\pi{\rm{i}}\theta}\mapsto 2\cos(\theta)
\end{cases}.
\end{equation}
These diffeomorphisms  are respectively orientation-reversing and orientation-preserving once the unit semi-circles and the interval $[-2,2]$ are equipped with orientations inherited from the obvious positive orientations of the unit circle and the real line. Theorem \ref{extremal classification 1}(b) indicates that $f\restriction_{[-2,2]}:[-2,2]\rightarrow[-2,2]$ has $d$ surjective monotonic pieces. Hence indexing its turning points as 
\begin{equation}\label{turning}
t_0=-2<t_1<\dots<t_{d-1}<t_d=2,
\end{equation}
the map $f$ takes each subinterval $[t_i,t_{i+1}]$ bijectively onto $[-2,2]$ with these bijections alternatingly increasing and decreasing. Consider the preimages of points in \eqref{turning} under the maps in \eqref{alpha+-}:
\begin{equation}\label{preimages1}
p_{i+}=\mathfrak{j}_+^{-1}(t_i)\quad p_{i-}=\mathfrak{j}_-^{-1}(t_i) \quad(0<i<d). 
\end{equation}
Clearly, one has 
\begin{equation}\label{preimages2}
\mathfrak{j}_\pm^{-1}(2)=1<p_{(d-1)+}<\dots<p_{1+}<\mathfrak{j}_\pm^{-1}(-2)=-1<p_{1-}<\dots<p_{(d-1)-}\,;
\end{equation}
in the counter-clockwise direction of the unit circle.
Moving along the unit circle from $1$ to $-1$ and then back to $1$, the points above cut $2d$ consecutive closed arcs $C_1,\dots,C_{2d}$ with the first $d$ covering the closed upper semi-circle and the rest covering the closed lower semi-circle. These arcs are disjoint aside from the adjacent arcs $C_{i}$ and $C_{i+1}$ (indices considered cyclically modulo $2d$) intersecting at a point from \eqref{preimages2}.
Next, we define a self-map $g$ of the unit circle as follows:  
\begin{equation}\label{the circle map}
g\restriction_{C_i}=
\begin{cases}
(\mathfrak{j}_+)^{-1}\circ f\circ\mathfrak{j}_+\restriction_{C_i}\quad \text{if } 1\leq i\leq d\,\&\,i \text{ odd}\\
(\mathfrak{j}_-)^{-1}\circ f\circ\mathfrak{j}_+\restriction_{C_i}\quad \text{if } 1\leq i\leq d\,\&\,i \text{ even}\\
(\mathfrak{j}_+)^{-1}\circ f\circ\mathfrak{j}_-\restriction_{C_i}\quad \text{if } d+1\leq i\leq 2d\,\&\,i \text{ odd}\\
(\mathfrak{j}_-)^{-1}\circ f\circ\mathfrak{j}_-\restriction_{C_i}\quad \text{if } d+1\leq i\leq 2d\,\&\,i \text{ even}
\end{cases}
\quad (1\leq i\leq 2d).
\end{equation}
The map is clearly well defined since adjacent arcs overlap at points whose image under $\mathfrak{j}$ 
(and so under $\mathfrak{j}_+$ or $\mathfrak{j}_-$, as appropriate) is a point from $f^{-1}\left(\left\{\pm 2\right\}\right)$ in 
\eqref{turning}; and we have 
$\mathfrak{j}_\pm^{-1}(2)=\{1\}$ and $\mathfrak{j}_\pm^{-1}(-2)=\{-1\}$. It is evident from \eqref{the circle map} that $g$ bijects each of the arcs $C_1,\dots,C_{2d}$ alternatingly onto either the upper or the lower semi-circle. Notice that these diffeomorphisms are of the same degree; which is,  $1$ if the last lap
$$f\restriction_{[t_{d-1},t_d]=\mathfrak{j}_+(C_1)=\mathfrak{j}_-(C_{2d})}$$
is increasing and $-1$ if the lap is decreasing. For $1\leq i<d$ (respectively or $d+1\leq i<2d$), the diffeomorphisms
$(\mathfrak{j}_+)^{-1}\circ f\circ\mathfrak{j}_+\restriction_{C_i}$ and 
$(\mathfrak{j}_-)^{-1}\circ f\circ\mathfrak{j}_+\restriction_{C_{i+1}}$
(resp. $(\mathfrak{j}_+)^{-1}\circ f\circ\mathfrak{j}_-\restriction_{C_i}$ and 
$(\mathfrak{j}_-)^{-1}\circ f\circ\mathfrak{j}_-\restriction_{C_{i+1}}$)
are of the same degree as the degrees of $\mathfrak{j}_\pm$ and also those of the adjacent laps 
$f\restriction_{\mathfrak{j}(C_i)}$, 
$f\restriction_{\mathfrak{j}(C_{i+1})}$ are opposite. The same is true for $i=d$, when it comes to the degrees of 
$g\restriction_{C_d}=\mathfrak{j}_{(-1)^{d-1}}^{-1}\circ f\circ\mathfrak{j}_+\restriction_{C_d}$ and
$g\restriction_{C_{d+1}}=\mathfrak{j}_{(-1)^d}^{-1}\circ f\circ\mathfrak{j}_-\restriction_{C_{d+1}}$: they are just $(-1)^{d-1}$
 times the degree of the lap 
$$f\restriction_{\mathfrak{j}_+(C_d)=\mathfrak{j}_-(C_{d+1})=[t_0,t_1]}.$$ 
\indent The discussion above implies that \eqref{the circle map} defines an unramified $d$-sheeted self-cover $g$ of the unit circle satisfying \eqref{semi-conjugacy}. The map is clearly real analytic away from the endpoints of $C_i$'s 
(as appeared in \eqref{preimages1}) where the formula for $g$ changes. Thus every interior point $x$ on an arc $C_i$ admits an open neighborhood $U_x$ in the complex plane on which there is an extension of $g$ to a local biholomorphism.  This holomorphic extension of $g$ of course satisfies $\mathfrak{j}\circ g=f\circ\mathfrak{j}$ as well. Now notice that the unit circle is the preimage of the Julia set $[-2,2]$ of $f$ under $\mathfrak{j}$. By the complete invariance of the Julia set, any such extension of $g$ takes points off the unit circle to points off the unit circle. As this holomorphic extension preserves the orientation, it must take points out of the unit disk inside and vice versa if $g\restriction_{C_i}$ is orientation-reversing; while it keeps
$U_x\cap\left\{|z|<1\right\}$ and $U_x\cap\left\{|z|>1\right\}$ invariant if $g\restriction_{C_i}$ is orientation-preserving. We conclude that there is a holomorphic extension of the circle map $g$ to the open neighborhood $U:=\bigcup_xU_x$ of the complement
$$
\left\{|z|=1\right\}-\left\{p_{1\pm},\dots,p_{(d-1)\pm},\pm 1\right\}
$$
of the endpoints \eqref{preimages2} of arcs $C_1,\dots,C_{2d}$; and this extension, denoted by the same symbol $g$, either preserves or interchanges $U\cap\left\{|z|<1\right\}$ and 
$U\cap\left\{|z|>1\right\}$, based on whether the degree of the unramified covering 
\begin{equation}\label{circle map1}
g\restriction_{\left\{|z|=1\right\}}:\left\{|z|=1\right\}\rightarrow\left\{|z|=1\right\}
\end{equation}
is positive or negative respectively. But either on the open unit disk or on its complement the map $\mathfrak{j}$ from \eqref{alpha} restricts to a biholomorphism onto the Fatou set $\hat{\Bbb{C}}-[-2,2]$ of $f$. Consequently, away from the unit circle one can solve 
$\mathfrak{j}\circ g=f\circ\mathfrak{j}$ for $g$: if the degree of the circle map \eqref{circle map1} is $d$,  
$g\restriction_{U\cap\left\{|z|<1\right\}}$ and $g\restriction_{U\cap\left\{|z|>1\right\}}$ are given by 
$$
\left(\mathfrak{j}\restriction_{\left\{|z|<1\right\}}:\left\{|z|<1\right\}\rightarrow \hat{\Bbb{C}}-[-2,2]\right)^{-1}\circ f\circ \left(\mathfrak{j}\restriction_{\left\{|z|<1\right\}}:\left\{|z|<1\right\}\rightarrow \hat{\Bbb{C}}-[-2,2]\right)
$$
and 
$$
\left(\mathfrak{j}\restriction_{\left\{|z|>1\right\}}:\left\{|z|>1\right\}\rightarrow \hat{\Bbb{C}}-[-2,2]\right)^{-1}\circ f\circ \left(\mathfrak{j}\restriction_{\left\{|z|>1\right\}}:\left\{|z|>1\right\}\rightarrow \hat{\Bbb{C}}-[-2,2]\right);
$$
while they are respectively given by 
$$
\left(\mathfrak{j}\restriction_{\left\{|z|>1\right\}}:\left\{|z|>1\right\}\rightarrow \hat{\Bbb{C}}-[-2,2]\right)^{-1}\circ f\circ \left(\mathfrak{j}\restriction_{\left\{|z|<1\right\}}:\left\{|z|<1\right\}\rightarrow \hat{\Bbb{C}}-[-2,2]\right)
$$
and 
$$
\left(\mathfrak{j}\restriction_{\left\{|z|<1\right\}}:\left\{|z|<1\right\}\rightarrow \hat{\Bbb{C}}-[-2,2]\right)^{-1}\circ f\circ \left(\mathfrak{j}\restriction_{\left\{|z|<1\right\}}:\left\{|z|>1\right\}\rightarrow \hat{\Bbb{C}}-[-2,2]\right);
$$
if the degree of \eqref{circle map1} is $-d$. We conclude that $g$ can be holomorphically extended even further to 
$\hat{\Bbb{C}}-\left\{p_{1\pm},\dots,p_{(d-1)\pm},\pm 1\right\}$. By the Identity Principle,  the extension again satisfies the functional equation $\mathfrak{j}\circ g=f\circ\mathfrak{j}$. Consequently, for any sequence $\{a_n\}_n$ tending to one of the points of the unit circle 
$$\left\{p_{1\pm},\dots,p_{(d-1)\pm},\pm 1\right\}=\mathfrak{j}^{-1}\left(f^{-1}\left(\{\pm 2\}\right)\right)$$
excluded above, any convergent subsequence of $\{g(a_n)\}_n$ tends to an element of 
$\mathfrak{j}^{-1}\left(\{\pm 2\}\right)=\{\pm 1\}$. We deduce that none of these isolated points is an essential singularity and so  $g$ extends holomorphically to the whole Riemann sphere. Therefore, we have constructed a rational function $g\in\Bbb{C}(z)$ semi-conjugate to $f$ as in \eqref{semi-conjugacy}. Its Julia set is of course the preimage of $\mathcal{J}(f)=[-2,2]$ under the Joukowsky map $\mathfrak{j}$; namely, the unit the circle.  This finishes the proof. 
\end{proof}

\begin{remark}
The dynamics on the Julia set can be easily described in many cases of Theorem \ref{extremal classification 1}. In 
\ref{extremal classification 1}(a), if the critical points are away from the Julia set (e.g. when the map is hyperbolic), they all lie in a parabolic or attracting immediate basin. The dynamics on the Julia set is then conjugate to the one-sided shift on $d$ symbols \cite{MR1376666}. In \ref{extremal classification 1}(b), the Julia set of $f\in{\rm{Rat}}_d(\Bbb{R})$ is a compact interval on which $f$ restricts to a boundary-anchored expansive interval map with $d$ surjective laps. Such an interval map is conjugate to a piecewise linear map of the same modality. Finally, in  \ref{extremal classification 1}(c) where $\mathcal{J}(f)=\hat{\Bbb{R}}$, once $f$ is without parabolic fixed points (which there means that $f$ is hyperbolic), $f\restriction_{\hat{\Bbb{R}}}:\hat{\Bbb{R}}\rightarrow\hat{\Bbb{R}}$ would be an expanding circle map of degree $d$ and thus conjugate to one of the self-covers $z\mapsto z^d$ or $z\mapsto z^{-d}$ of the unit circle \cite{MR0240824}. In both \ref{extremal classification 1}(b) and \ref{extremal classification 1}(c), the multiplier of any periodic point of period $n$ of $f$ is $\pm d^n$ except the multiplier of the unique non-repelling fixed point; compare with  
\cite[Theorem 8.1]{MR623533}.
\end{remark}

\begin{example}\label{possibilities}
The examples of quadratic rational maps appeared in \cite[Problem 10-e]{MR2193309} embody all possibilities for the Julia set outlined in Theorem \ref{extremal classification 1}. 
The real map $f(z)=z-\frac{1}{z}$ takes 
$\hat{\Bbb{R}}$ to $\hat{\Bbb{R}}$ via a degree two covering and 
has a parabolic fixed point of multiplier $+1$ and multiplicity $n+1=3$ at infinity. The upper and the lower half planes are the corresponding parabolic basins and the Julia set is $\hat{\Bbb{R}}$.
Post-composing this map with its automorphism 
$z\mapsto -z$ does not change the Julia set, so $z\mapsto-\left(z-\frac{1}{z}\right)$ also has $\hat{\Bbb{R}}$ as its Julia set; but, here the multiplier of the parabolic point $\infty$ is $-1$. 
Things are different for the real map  $f(z)=z-\frac{1}{z}+1$ where the fixed point at infinity is of multiplicity $n+1=2$; hence only  one immediate basin. It is not hard to verify that this only Fatou component of $f$ contains $\hat{\Bbb{C}}-[-\infty,1]$; and the Julia set of $f$ is a Cantor subset of the interval $I=[-\infty,1]$. Observe that   
$$f^{-1}\left([-\infty,1]\right)=[-\infty,-1]\cup[0,1]\subsetneq[-\infty,1].$$
Finally, consider the map $f(z)=z+\frac{1}{z}-2$. Again $p=\infty$ is a fixed point of multiplier $+1$ and of multiplicity $n+1=2$. This time the Julia set is the completely invariant interval $I=[0,+\infty]$. \\
\indent
Notice that the first and the third examples above (with circle and interval Julia sets) are semi-conjugate as predicted by Theorem \ref{temp3}: 
$
\left(z-\frac{1}{z}\right)^2=z^2+\frac{1}{z^2}-2.
$
One other example of such semi-conjugacies is given by Chebyshev polynomials: the $d^{\rm{th}}$ Chebyshev polynomial $T_d(z)$ (normalized to be monic via a linear conjugation) satisfies
$T_d\left(z+\frac{1}{z}\right)=z^d+\frac{1}{z^d}$; its Julia set is $[-2,2]$, the image of the unit circle which is the Julia set of 
$z\mapsto z^d$ under the semi-conjugacy $z\mapsto z+\frac{1}{z}$. Compare with \cite[Problems 7-c \& 7-d]{MR2193309}. 
\end{example}

\begin{remark}
In view of Corollary \ref{main-corollary}, one can directly count the dimension of the families exhibited in Theorem \ref{temp3} and compare it with the dimension of the dynamical moduli space in \eqref{dimension formula}. Given a map $f$ with 
$h_{\Bbb{R}}(f)=\log(d)$, the description of the Fatou components in Theorem \ref{extremal classification 1} indicates that there is no rotation domain. There is no invariant line field because the Julia set is contained in the measure zero subset $\hat{\Bbb{R}}$ (or $|z|=1$) of the complex plane. Therefore, away from the post-critical loci, for a generic choice of a map $f$ from one of the families appeared in Theorem \ref{temp3} one can rewrite \eqref{dimension formula} as
\begin{equation}\label{dimension formula 1}
\dim_{\Bbb{C}}{\rm{M}}(f)=\#\text{critical points in the Fatou set}-\#\text{cycles of parabolic basins}.
\end{equation} 
For each family from Theorem \ref{temp3}, subtracting the number of degrees of freedom the corresponding normal form from the real dimension of the family must yield the same number as \eqref{dimension formula 1}.  
\begin{itemize}
\item For the family of maps in \eqref{Blaschke temp2} the formula yields $2d-2$ as all $2d-2$ critical points are Fatou and there is no parabolic cycle. The family is dependent on $d-1$ complex (and hence $2d-2$ real parameters) and there is no continuous group of transformations preserving this normal form.
\item In Theorem \ref{temp3}(iii), the two-dimensional group of real affine transformations acts on the space of maps
$z+\frac{P(z)}{Q(z)}$  
where $P,Q$ are real coprime polynomials of degrees $<d-1$ and $d-1$ respectively. The space of these maps is of dimension 
$$
\left[(d-1)+d\right]-1=2d-2,
$$
and the property of critical points being off the real line is an open one. Hence the projection of the family to $\mathcal{M}'_d$
would be of real dimension $(2d-2)-2=2d-4$. The formula \eqref{dimension formula 1} yields the same: There are $2d-2$ critical points in the Fatou set and two fixed parabolic basins. Things are slightly different for maps $-\left(z+\frac{P(z)}{Q(z)}\right)$ as the degree of $Q$ is allowed to be $d-1$ too, so the dimension $(2d-1)-2=2d-3$
for the projection of this family to $\mathcal{M}'_d$. The multiplier of the parabolic fixed point is $-1$, so the dynamics interchanges the open half planes and there is a $2$-cycle of parabolic basins. Hence one gets $(2d-2)-1=2d-3$ form  
\eqref{dimension formula 1}. 
\item The normal form  \eqref{Blaschke temp3} has no continuous group of automorphisms and amounts to several families (all of the same dimension) based on the choice of the plus or minus sign and the number of real parameters among $a_1,\dots,a_{d-1}$. For instance, we can take 
$$
z.\prod_{i=1}^{d-1}\left(\frac{z-a_i}{1-a_i z}\right)\quad 
\left(a_1,\dots,a_{d-1}\in (-1,1)\right)
$$
which is of real dimension $d-1$. The Julia set for any of hyperbolic maps discussed in  Theorem \ref{temp3}(iv) would be an interval on which the map is $(d-1)$-modal, so exactly 
$(2d-2)-(d-1)=d-1$ critical points in the Fatou set. 
\end{itemize}
\end{remark}

\appendix     
\section{The Julia set and the real circle}\label{appendix A}

The Julia set $\mathcal{J}(f)$ of a rational map $f$ of degree $d\geq 2$ intuitively incorporates  the ``chaotic'' parts of the dynamics of $f$.  So given a rational map $f\in\Bbb{R}(z)$ of degree $d$, it is only the intersection of the Julia set of $f$ with $\hat{\Bbb{R}}$
-- the \textit{real Julia set} $\mathcal{J}_\Bbb{R}(f):=\mathcal{J}(f)\cap \hat{\Bbb{R}}$ -- that affects the entropy of 
$f\restriction_{\hat{\Bbb{R}}}:\hat{\Bbb{R}}\rightarrow\hat{\Bbb{R}}$. 
This is due to the fact that 
$\mathcal{J}(f)\cap \hat{\Bbb{R}}$ contains the portion of the non-wandering set of $f\restriction_{\hat{\Bbb{R}}}$
that really matters to entropy.  The article \cite{MR3311773} for instance relates the real entropy of a quadratic polynomial such as  $x\mapsto x^2+c\,\, (-2\leq c\leq\frac{1}{4})$ 
to the Hausdorff dimension of the set of rays that land on the intersection of the filled Julia set with the real axis. The interplay between the real Julia set and the real entropy is the main theme of this appendix. The main goal of 
\S\ref{structure subsection} is to prove Theorem \ref{temp2} which yields a  description of the real Julia set and the dynamics on it. \S\ref{examples subsection} is devoted to some remarks and examples regarding  Theorems \ref{temp2} and \ref{Structure}. \\
\indent  
Certain  topics from the dynamics of interval and circle maps will be invoked. See books \cite{MR1239171}, \cite{2015arXiv150403001R}  or the survey article \cite{MR2600680} for the background material.\\  
\indent 
At last, to avoid confusion when going back and forth between real and complex contexts, we remind the reader that in the subsystem obtained  by restricting a rational map to an invariant interval or circle, an infinite orbit might tend to a periodic cycle which is  
merely non-repelling for the rational map\footnote{Keep in mind that a repelling cycle of the complex map is topologically repelling 
\cite[Lemma 8.10]{MR2193309}.};  we can have a parabolic cycle lying on the interval or circle that as a cycle of the restricted map is \textit{weakly attracting} or possess a basin which is only $1$-sided. Of course, this possibly $1$-sided basin of the interval or circle map is contained in a parabolic basin of the ambient rational map. 

\subsection{Structure of the real Julia set}\label{structure subsection}
In this appendix we prove a structure theorem for the system  $\left(f,{\mathcal{J}_\Bbb{R}(f)}\right)$:
\begin{theorem}\label{temp2}
The non-wandering set of the dynamical system 
$f\restriction_{\mathcal{J}_\Bbb{R}(f)}:\mathcal{J}_\Bbb{R}(f)\rightarrow\mathcal{J}_\Bbb{R}(f)$ 
is either the whole circle or the union of finitely many intervals and a totally disconnected subsystem. Moreover, the system
has a closed subsystem of the same topological entropy in which preperiodic points of $f$ are dense.\footnote{The subsystem is allowed to be empty when the real entropy vanishes. This may occur even if the Julia set intersects the real axis non-trivially along an interval; check  Example \ref{Blaschke1} and Remark \ref{elaborate} for maps of real entropy zero whose real Julia sets are the whole real circle or an interval of it.}
\end{theorem}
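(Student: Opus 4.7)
My plan is to deduce the structure of $\Omega:={\rm{NW}}\left(f\restriction_{\mathcal{J}_\Bbb{R}(f)}\right)$ from the corresponding structure of the non-wandering set $\widetilde\Omega:={\rm{NW}}\left(f\restriction_{\hat{\Bbb{R}}}\right)$ of the ambient multimodal circle map, using that $f\restriction_{\hat{\Bbb{R}}}$ is real-analytic with at most $2d-2$ turning points. The first step is the observation that $\mathcal{J}_\Bbb{R}(f)$, being closed in $\hat{\Bbb{R}}$, has connected components that are singletons or closed intervals, and that $f$ permutes these components; because $f$ is real-analytic and non-constant, a non-degenerate interval component must be sent to a non-degenerate interval component.

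Next I would compare $\Omega$ with $\widetilde\Omega\cap\mathcal{J}_\Bbb{R}(f)$. The inclusion $\Omega\subseteq\widetilde\Omega\cap\mathcal{J}_\Bbb{R}(f)$ is immediate. For the reverse I intend to use that every point of $\widetilde\Omega$ lies in one of the basic sets of the piecewise monotone map $f\restriction_{\hat{\Bbb{R}}}$, and that every transitive basic set of positive entropy carries a dense set of repelling periodic orbits of $f$, so it sits entirely inside $\mathcal{J}(f)$. Consequently the only parts of $\widetilde\Omega$ outside $\mathcal{J}_\Bbb{R}(f)$ are the finitely many attracting periodic orbits of $f$ lying on $\hat{\Bbb{R}}$; removing them affects neither the qualitative structure nor the topological entropy.

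The technical heart of the argument is the classical spectral decomposition for continuous piecewise monotone interval and circle maps (due to Blokh, with antecedents in Jonker--Rand and Hofbauer): the non-wandering set $\widetilde\Omega$ decomposes as a finite disjoint union of basic sets, each of one of three types---(a) a finite union of closed intervals permuted cyclically by $f$ with topologically transitive dynamics; (b) a totally disconnected closed subsystem on which $f$ is topologically transitive; or (c) an isolated periodic orbit. Intersecting this decomposition with $\mathcal{J}_\Bbb{R}(f)$ and invoking the previous paragraph produces the corresponding decomposition of $\Omega$. If some type-(a) basic set exhausts $\hat{\Bbb{R}}$, we are in the case $\Omega=\hat{\Bbb{R}}$; otherwise the type-(a) pieces contribute finitely many closed intervals and the union of the type-(b) and type-(c) pieces is a totally disconnected closed subsystem, which is exactly the claimed dichotomy.

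For the assertion about dense preperiodic points I would take the subsystem $X=\Omega$ itself. The equality $h_{\rm{top}}(f\restriction_X)=h_\Bbb{R}(f)$ follows from the standard fact that the non-wandering set of a continuous map on a compact space carries the full topological entropy, together with the observation that the dynamics on $\hat{\Bbb{R}}\setminus\mathcal{J}_\Bbb{R}(f)$ consists of intervals converging to attracting or parabolic cycles or sitting in real slices of rotation basins, all of zero entropy. On each transitive basic set of type (a) or (b), periodic points of $f$ are dense---a standard feature of transitive subsystems of one-dimensional piecewise monotone maps---while type-(c) pieces are themselves periodic orbits; hence preperiodic points of $f$ are dense in $\Omega$. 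The principal obstacle I anticipate is making rigorous the inclusion of every positive-entropy basic set in $\mathcal{J}(f)$, which I would handle by combining transitivity with the Misiurewicz--Szlenk identification of the topological entropy of a piecewise monotone map with the exponential growth rate of periodic orbits, so as to produce sufficiently many repelling real periodic orbits inside the basic set to force it into $\mathcal{J}(f)$.
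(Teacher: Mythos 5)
Your overall strategy — run the Blokh/Hofbauer spectral decomposition for piecewise monotone circle maps, and filter the pieces through the Julia set — is a genuinely different route from the paper, which instead analyses the connected components of $\mathcal{J}_{\Bbb{R}}(f)$ directly: it invokes non-existence of wandering intervals (Martens--de~Melo--van~Strien) to show the interval components are eventually periodic, counts the periodic interval cycles by critical points, and then builds the preperiodic-dense subsystem explicitly inside each piece (Theorem~\ref{Structure}, Proposition~\ref{technical1}, Corollary~\ref{technical2}). Both can be made to work, but as written your plan has a gap that tracks precisely the point where the theorem is phrased more cautiously than you read it.

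The gap is the treatment of solenoidal (adding-machine) pieces. Your three-type decomposition (transitive interval cycles, transitive totally disconnected sets, isolated periodic orbits) silently omits the minimal Cantor sets that arise as $\omega$-limits of critical points of infinitely renormalizable maps — Blokh's decomposition has these as a separate, fourth type, not covered by the first three. Such a set $S$ is minimal, carries \emph{no} periodic orbit at all, and sits inside $\mathcal{J}_{\Bbb{R}}(f)$ (Remark~\ref{elaborate} records the Feigenbaum map, whose real Julia set is an interval and whose Cantor attractor is exactly such an $S$). Minimality forces $S\subseteq\Omega$. Consequently the choice $X=\Omega$ fails: preperiodic points are not dense in $S$ and therefore not dense in $\Omega$. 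Notice the statement you are proving only asserts the existence of a \emph{closed subsystem of the same entropy} with dense preperiodic points, not that $\Omega$ itself has this property — and your proposal inadvertently tries to prove the stronger, false claim. The fix is the one the paper uses: take $X$ to be the closure of the preperiodic points (or, equivalently for your framework, the union of the isolated periodic pieces and the positive-entropy transitive pieces), observe that solenoidal sets have zero entropy and so dropping them does not reduce the topological entropy, and conclude via the standard fact that the countable set of excluded wandering/exceptional points carries zero entropy.

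Two secondary points to tighten: (1) the finiteness of the interval-type pieces is asserted but not argued. It is not automatic for a general piecewise monotone map; for real-analytic rational restrictions it follows because each periodic cycle of non-degenerate Julia intervals must swallow a critical orbit (a boundary-anchored monotone map on a Julia interval is impossible), giving the bound $2d-2$ — exactly the argument in the proof of Theorem~\ref{Structure}. (2) You say transitive positive-entropy basic sets carry dense \emph{repelling} periodic orbits; the dense periodic orbits are non-attracting (an attracting orbit would have a basin, contradicting transitivity of an infinite set), hence repelling or parabolic, and both land in $\mathcal{J}(f)$. Phrasing it that way lets the inclusion into the Julia set go through without needing strict repulsion.
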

We first establish the following more detailed description of the real Julia set. 

\begin{theorem}\label{Structure}
Let $f\in\Bbb{R}(z)$ be of degree $d\geq 2$ and $\emptyset\neq\mathcal{J}(f)\cap\hat{\Bbb{R}}\subsetneq\hat{\Bbb{R}}$. Then 
$\mathcal{J}_\Bbb{R}(f)=\mathcal{J}(f)\cap\hat{\Bbb{R}}$ can be uniquely written as the  union 
below of closed $f$-invariant subsets:
\begin{equation}\label{union}
\mathcal{J}_\Bbb{R}(f)=\left(\bigsqcup_{I\in\mathcal{I}} I\right)\bigcup Z;
\end{equation}
where
\begin{enumerate}
\item  $\mathcal{I}$ is a countable collection of  pairwise disjoint compact non-degenerate intervals on which   $f$ acts by taking an interval  into another while preserving the boundaries;
\item every $I\in\mathcal{I}$ is eventually periodic under $f$; that is, some iterate of it is contained 
in an interval $I'\in\mathcal{I}$ satisfying $f^{\circ p}(I')\subseteq I'$ for some appropriate $p>0$; 
furthermore, there are only finitely many such periodic intervals $I'\in\mathcal{I}$; 
\item $Z$ is a totally disconnected closed subsystem in which preperiodic points are dense and moreover can intersect an interval $I\in\mathcal{I}$ only at its endpoints;
\end{enumerate}  
Furthermore, there is a Cantor subsystem $C$ of $f\restriction_Z:Z\rightarrow Z$ with the same entropy and a (possibly vacuous) finite set 
$S$ of isolated periodic points of the system $f\restriction_{\mathcal{J}_\Bbb{R}(f)}$ such that $Z$ can be written as the union of $C$ and a closed countable subsystem which consists of  backward iterates and the $\alpha$-limit set of $S$:
\begin{equation}\label{union2}
Z=C\bigcup\overline{\left(\bigcup_{n}f^{\circ -n}(S)\right)\bigcap\hat{\Bbb{R}}}.
\end{equation}
\end{theorem}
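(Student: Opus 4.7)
The plan is to extract the decomposition purely from the topology of the closed set $\mathcal{J}_\Bbb{R}(f)\subset\hat{\Bbb{R}}$, then use Sullivan's no-wandering-domains theorem to propagate eventual periodicity from Fatou components to the intervals of $\mathcal{I}$, and finally apply Cantor--Bendixson to peel $Z$ into a perfect Cantor core and a countable skeleton of backward iterates of a finite periodic set.

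I would begin by defining $\mathcal{I}$ to be the (necessarily countable) family of non-degenerate connected components of $\mathcal{J}_\Bbb{R}(f)$ and setting
\[
Z:=\mathcal{J}_\Bbb{R}(f)\setminus\bigsqcup_{I\in\mathcal{I}}{\rm{int}}(I).
\]
Then $Z$ is closed, meets each $I\in\mathcal{I}$ only in $\partial I$, and is totally disconnected since any connected subset of $\mathcal{J}_\Bbb{R}(f)$ of positive diameter lies in some $I$; uniqueness of this decomposition is forced because the $I$'s are precisely the maximal non-degenerate components of $\mathcal{J}_\Bbb{R}(f)$. For (1), if $I\in\mathcal{I}$ then $f(I)$ is connected and, as $f$ is a non-constant analytic map, non-degenerate; hence $f(I)$ is contained in a unique $I'\in\mathcal{I}$. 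The boundary preservation $f(\partial I)\subseteq\partial I'$ follows since every $a\in\partial I$ is approached within $\hat{\Bbb{R}}$ by real Fatou points lying in the gap of $\mathcal{J}_\Bbb{R}(f)$ adjacent to $I$ at $a$, whose $f$-images are real Fatou points approaching $f(a)$; combined with $f(a)\in I'$ this yields $f(a)\in\partial\mathcal{J}_\Bbb{R}(f)\cap I'=\partial I'$.

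For (2)---the most delicate step---the plan is to pair each $I\in\mathcal{I}$ with the unordered pair $\{F^-(I),F^+(I)\}$ of Fatou components whose real slices contain the gaps abutting $I$ on either side. Using the boundary preservation just established together with local monotonicity of $f$ on a one-sided real neighbourhood of an endpoint $a\in\partial I$ (away from the finitely many critical points on $\hat{\Bbb{R}}$), one checks that $f$ sends $\{F^-(I),F^+(I)\}$ into $\{F^-(I'),F^+(I')\}$ for the target interval $I'\supseteq f(I)$. By the Fatou--Shishikura bound on non-repelling cycles and Sullivan's theorem, every Fatou component is eventually periodic and only finitely many are periodic, so the orbit of the pair eventually cycles. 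A periodic Fatou component contributes only finitely many real boundary arcs (bounded by the number of real critical values), and each such arc borders at most two intervals of $\mathcal{I}$; hence the set of periodic intervals is finite and every $I\in\mathcal{I}$ is eventually trapped inside it. The principal obstacle is controlling how many intervals of $\mathcal{I}$ can abut a single periodic Fatou component, which is handled via the finiteness of the critical set and the piecewise monotonicity of $f$ along $\hat{\Bbb{R}}$.

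For the remainder of (3) and the decomposition \eqref{union2}, density of preperiodic points of $f$ in $\mathcal{J}_\Bbb{R}(f)\supseteq Z$ is deduced by picking a non-exceptional real periodic point $p_0$ of $f$---which exists because, of the $d+1$ fixed points of $f$, complex conjugation pairs off any non-real ones, and in the otherwise exceptional case one passes to $f^{\circ 2}$---and observing that $\bigcup_{n\geq 0}f^{-n}(p_0)\cap\hat{\Bbb{R}}$ is dense in $\mathcal{J}_\Bbb{R}(f)$ by the classical Montel argument. Cantor--Bendixson applied to the compact totally disconnected $Z$ writes it uniquely as a disjoint union $C\sqcup D$ with perfect kernel $C$ (empty or Cantor) and countable $D$. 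Every element of $D$ is preperiodic, with forward orbit terminating at a periodic point $p$ that itself must be isolated in $\mathcal{J}_\Bbb{R}(f)$ (otherwise $p$ would be absorbed into $C$); such isolated periodic points form the finite set $S$ (finite because each is adjacent to a periodic real Fatou arc, of which only finitely many exist). The identification $D=\overline{\bigcup_n f^{-n}(S)}\cap\hat{\Bbb{R}}$ is then immediate, and $h_{\rm{top}}(f\restriction_Z)=h_{\rm{top}}(f\restriction_C)$ because the invariant complement $D$ is countable and thus of zero entropy.
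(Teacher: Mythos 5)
Your decomposition itself, the boundary-preservation argument in part (1), and the Cantor--Bendixson split of $Z$ are fine and essentially match the paper. But part (2) --- eventual periodicity of the intervals --- has a genuine gap. You try to derive it from Sullivan's no-wandering-domains theorem by pairing each $I$ with the Fatou components $\{F^-(I),F^+(I)\}$ ``whose real slices contain the gaps abutting $I$ on either side.'' That pairing is in general ill-defined: an endpoint $a\in\partial I$ need not abut a single Fatou gap, because points of $\mathcal{J}_\Bbb{R}(f)$ (whether from $Z$ or from an infinite sequence of ever-smaller intervals of $\mathcal{I}$) can accumulate at $a$ from outside $I$; compare the paper's cubic example $0.19z^3+z^2-1$ with infinitely many interval components. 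One only knows that Fatou points exist arbitrarily close to $a$ on the outside; there is no canonical adjacent Fatou component, so no well-defined $f$-equivariant pairing, and Sullivan's theorem never engages. The obstacle you flag (counting intervals abutting a single periodic component) is secondary; the primary one is that the pairing itself does not exist. The paper's proof instead invokes the non-existence of wandering intervals for analytic circle maps (Martens--de Melo--van Strien), a deep result of real one-dimensional dynamics that directly forces two iterates of $I$ to collide, and then bounds the number of periodic cycles of intervals by showing each must capture a critical orbit. Sullivan's theorem on $\hat{\Bbb{C}}$ is not a substitute for the real one-dimensional no-wandering-intervals theorem.

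The second gap is in the density of preperiodic points. Montel's theorem gives density of $\bigcup_n f^{-n}(p_0)$ in the \emph{complex} Julia set $\mathcal{J}(f)$; it does not give density of the \emph{real} preimages in $\mathcal{J}_\Bbb{R}(f)$ --- near an isolated point $x_0$ of $\mathcal{J}_\Bbb{R}(f)$, preimages of $p_0$ arrive in complex-conjugate pairs off the real axis, and there is no reason any should be real near $x_0$ unless $x_0$ is itself in the backward orbit of $p_0$. Moreover your claim is stronger than what the theorem asserts (density in all of $\mathcal{J}_\Bbb{R}(f)$ rather than in $Z$ alone), and the stronger statement can fail on the interval components --- e.g.\ at an infinitely renormalizable parameter the invariant interval contains a region where orbits converge to a Cantor attractor, a region in which preperiodic points need not be dense. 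The paper's argument is local to $Z$: it identifies the dense subset $Z''$ of points bordering a one-sided Fatou gap, tracks the gap under iteration into the immediate basin of a real attracting or parabolic cycle (ruling out rotation domains via Lemma~\ref{no Siegel}), and concludes that a forward iterate of each point of $Z''$ is the endpoint of such a basin, hence preperiodic. That mechanism is genuinely tied to the one-sided gap structure of $Z$ and does not come from Montel.
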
  
This emphasis on the density of preperiodic points in this theorem and the succeeding proposition is due to the fact that  we intuitively expect 
$f\restriction_{\mathcal{J}_\Bbb{R}(f)}:\mathcal{J}_\Bbb{R}(f)\rightarrow\mathcal{J}_\Bbb{R}(f)$
to exhibit ``chaotic behavior'', at least in some subsystems,  and the density of periodic points is often one of the manifestations  of ``chaos''. Here, the chaos is intended in the sense of Devaney \cite[\S 1.8]{MR1046376}:

\textit{ A continuous map $f:(X,d)\rightarrow (X,d)$ on the metric space $(X,d)$ is called chaotic if:
\begin{enumerate}[I.]
\item $f$ is transitive;
\item the periodic points of $f$ are dense;
\item $f$ has sensitive dependence on initial conditions.
\end{enumerate}}
\noindent
It turns out that for $X$ infinite, the last condition is always implied by the first two and for interval maps solely the first condition suffices;
see \cite{MR1157223} and \cite[Proposition 7.2]{2015arXiv150403001R} respectively. 

\begin{proof}[Proof of Theorem \ref{Structure}]
{The real Julia set  $\mathcal{J}_\Bbb{R}(f)$ is assumed to be a proper non-empty subset of 
$\hat{\Bbb{R}}$, and its connected components are thus either non-degenerate compact intervals or points. The continuous transformation $f\restriction_{\mathcal{J}_\Bbb{R}(f)}:\mathcal{J}_\Bbb{R}(f)\rightarrow \mathcal{J}_\Bbb{R}(f)$
must take a point component to a point component and an interval component to an interval component because it cannot collapse an  interval to a point due to the analyticity of $f$. 
Moreover, if $f(I)\subseteq I'$ for two interval components, then $f\left(\partial I\right)\subseteq\partial I'$
as otherwise, by continuity, there will be an open interval around an endpoint of $I$ that goes into 
$I'\subseteq\mathcal{J}(f)$ via $f$, so it is also a subset of  $\mathcal{J}(f)$; this is a contradiction since then 
the connected component of $\mathcal{J}(f)\cap\hat{\Bbb{R}}$  which has that endpoint would be strictly larger than $I$.
\\
\indent
Therefore,
the real Julia set may be written as a disjoint union 
\begin{equation}\label{union1}
\mathcal{J}_\Bbb{R}(f)=\left(\bigsqcup_{I\in\mathcal{I}} I\right)\bigsqcup Z' 
\end{equation}
of countably many interval components and the  totally disconnected set $Z'$ of point components. The map $f$
keeps $Z'$ invariant and maps an element of $\mathcal{I}$ into another. We claim that every $I\in\mathcal{I}$ has two distinct iterates that collide. If not,  the forward iterates 
$I, f(I), f^{\circ 2}(I),\dots$ are pairwise disjoint. In the terminology of real one-dimensional  dynamics, such an interval is called 
\textit{a wandering interval} unless $f^{\circ n}(I)$ tends to an attracting cycle as $n\to\infty$.
Intervals $f^{\circ n}(I)\subseteq\mathcal{J}_{\Bbb{R}}(f)$ cannot converge an attracting cycle of 
$f\restriction_{\hat{\Bbb{R}}}$ since a Julia point cannot lie in an attracting or parabolic basin of $f$.
Now, we invoke a powerful result from real one-dimensional  dynamics that immediately rules out existence of wandering intervals: 
\begin{itemize}
\item[]\textit{ An analytic map $S^1\rightarrow S^1$ does not admit any wandering interval.\footnote{The non-existence of wandering intervals has been proved by several authors in various generalities and 
was finally settled in \cite{MR1161268} in its full strength. The statement above is an immediate corollary of a version of this important result outlined in \cite[p. 260, Theorem A]{MR1239171} which is formulated for $C^2$ maps without any \textit{flat} critical point.}}
\end{itemize}
\noindent This  establishes that every interval in the union \eqref{union1} is eventually preserved by $f$: If 
$f^{\circ n}(I)\cap f^{\circ m}(I)\neq\emptyset$
for $n>m$, then both of these iterates are contained in the same $I'\in\mathcal{I}$, a connected component which is then invariant under $f^{\circ (n-m)}$ as $I'\cap f^{\circ (n-m)}(I')\neq\emptyset$.\\
\indent
 If $I\in\mathcal{I}$ is preserved by an iterate  $f^{\circ p}$, then the restriction $f^{\circ p}\restriction_{I}$ is boundary-anchored and 
 possesses a critical point  since otherwise  $f^{\circ p}\restriction_{I}$ would be monotonic. It is easy to verify that 
the domain of a continuous monotonic interval map can be decomposed to the disjoint union of a subset consisting of points of period at most two and countably many $1$-sided attracting basins of periodic orbits of period at most two. 
When the map is 
$C^1$ a periodic cycle admitting a $1$-sided attracting basin has its multiplier in $[-1,1]$. But an attracting or parabolic basin of $f$ does not intersect $I\subseteq\mathcal{J}(f)$
and so the only other possibility is for every point in $I$ to be of period at most two  which is absurd. By the chain rule, a critical point of $f^{\circ p}$ has to be in the backward orbit of a critical point of $f$. So each periodic cycle of intervals among members of $\mathcal{I}$ would completely contain a critical orbit of $f$. 
We deduce that there are  at most $2d-2$
such intervals in $\mathcal{I}$. \\
\indent
Next, we decompose $Z'$. Consider the set $Z''$ of points $z\in \mathcal{J}_{\Bbb{R}}(f)$ for which there is a non-degenerate interval $K_z:=[z,z+\epsilon_z]$
or
$K_z:=[z-\epsilon_z,z]$
that  intersects $\mathcal{J}(f)$ only at $z$. 
Since 
$Z'$ is formed by the point components of the closed set 
$\mathcal{J}_{\Bbb{R}}(f)=\mathcal{J}(f)\cap\hat{\Bbb{R}}$,  $Z''$ is  clearly dense in $Z'$. We claim that all points of $Z''$ are preperiodic. 
Without any loss of generality, let $K_z=[z,z+\epsilon]$; the other case is no different.
Due to non-existence of wandering  Fatou components, the Fatou component of $f$  containing $(z,z+\epsilon_z]$ is eventually periodic and we claim that it cannot end up in a cycle of rotation domains of $f$; in fact it is not hard to show that if a rotation domain of the rational map $f\in\Bbb{R}(z)$ intersects the real line then it must be a Herman ring containing the whole circle $\hat{\Bbb{R}}$ (Lemma \ref{no Siegel}) and this cannot occur here since we have assumed $\hat{\Bbb{R}}$
is not entirely Fatou. Consequently, the points of $(z,z+\epsilon_z]$ 
 under iteration tend  to a real attracting or parabolic cycle of the rational map $f$. 
Without any loss of generality and by replacing $f$ with an iterate if necessary, one can assume that the period is one.  
 As a fixed point of the circle map
$f\restriction_{\hat{\Bbb{R}}}:\hat{\Bbb{R}}\rightarrow\hat{\Bbb{R}}$, 
this cycle admits a (possibly $1$-sided) immediate basin $\mathcal{A}_\Bbb{R}$.
The closure 
$\overline{\mathcal{A}_\Bbb{R}}$
of this immediate basin is a closed interval whose endpoints are indeed Julia points. The continuity argument that appeared before shows that $f$ restricts to a boundary-anchored transformation of $\overline{\mathcal{A}_\Bbb{R}}$; in particular, the endpoints of the immediate basin are preperiodic. Sufficiently high iterates of $K_z=[z,z+\epsilon_z]$ must intersect the open interval $\mathcal{A}_\Bbb{R}\subset\mathcal{F}(f)$
but cannot be contained in that since they have Julia points from the orbit $\mathcal{O}(z)$. Therefore, there must be an endpoint of the closure $\overline{\mathcal{A}_\Bbb{R}}$ contained in an iterate $f^{\circ p}(K_z)$ of $K_z$. But
due to the complete invariance of the Fatou and the Julia sets, $f^{\circ p}(z)$ is the unique Julia point of the interval $f^{\circ p}({K_z})$ and so
$f^{\circ p}(z)$ is an endpoint of $\overline{\mathcal{A}_\Bbb{R}}$  and thus preperiodic.\\
\indent
Next, we  exploit what has been established about points of $Z''$ being preperiodic. This implies that $Z'$ and therefore its closure, denoted by $Z$, have the dense subset $Z''$ of preperiodic points. Just like $Z'$, the closure $Z$ must be $f$-invariant as well. This is totally disconnected and can intersect each interval component $I\in\mathcal{I}$ of $\mathcal{J}_\Bbb{R}(f)$ only at its endpoints. Therefore, the union from \eqref{union1} can be modified to the union below of closed subsystems: 
\small
$$
\mathcal{J}_\Bbb{R}(f)=\left(\bigsqcup_{I\in\mathcal{I}}I\right)\bigcup Z.$$
\normalsize
This is the desired decomposition \eqref{union} predicted by the theorem.
\\
\indent
To finish the proof, we merely need to 
decompose the totally disconnected system $Z$ to a Cantor part and a countable part. Lemma \ref{decomposition} provides a canonical way to do so: $Z$ is the union a Cantor set $C$ and the countable closed subspace $D$ which is the closure of the set of isolated points of $Z$.\footnote{There are indeed examples where the real Julia set has isolated points; for instance, 
consider the intersection of the Julia set of $z^2-1$ (the \textit{basilica}) with the real axis.}
The proof of Lemma  \ref{decomposition} establishes that $C$ is in fact the set of \textit{condensation points} of $Z$ and $f$, due to finiteness of its fibers, takes a condensation point of the subsystem $Z$ to another such  point. Hence the Cantor subspace $C$ is indeed a subsystem. Next, as the description of $D$ as the closure of the subset of isolated points of $Z$ suggests, we have to study isolated points of $Z$. Clearly, an isolated point of $Z$ is an isolated point of the real Julia set $\mathcal{J}_\Bbb{R}(f)$ unless it is an endpoint of an interval component in which case it is already included in 
$\bigsqcup_{I\in\mathcal{I}}I$
and excluding it from $Z$ will not affect the compactness of $Z$. Thus we  assume that isolated points of $Z$ remain isolated in the bigger space $\mathcal{J}_\Bbb{R}(f)$. 
Since the preperiodic points of $Z$ are dense in it, any isolated  
 point $z_0$ of it
has to be preperiodic, say $f^{\circ m}(z_0)=f^{\circ (m+p)}(z_0)$. 
So $z_0$ ends up in a cycle of period $p$ and it suffices to show that only finitely many such cycles of $f$ can arise in this manner form isolated points of  $\mathcal{J}_\Bbb{R}(f)$.     
Choose $\delta>0$ so small that either 
$(z_0,z_0+\delta)$ or $(z_0-\delta,z_0)$ is a subset of the Fatou set. Since $f^{\circ m}(z_0)=f^{\circ (m+p)}(z_0)$,
either the $(m+p)^{\rm{th}}$ or the $(m+2p)^{\rm{th}}$ forward iterate of this interval intersects its
$m^{\rm{th}}$ iterate non-trivially based on whether 
$\left(f^{\circ p}\right)'\left(f^{\circ m}(z_0)\right)$ is positive or negative. In particular, there is a Fatou component of period
$p$ or $2p$. But there are only finitely many periodic Fatou components and this puts an upper bound  on the eventual periods of points like $z_0$; consequently, there are only finitely many of them. This establishes the description \eqref{union2} of $Z$.
The entropy of $f\restriction_{Z}$ is the same as that of $f\restriction_{C}$ since the 
compact subsystem  $\overline{\left(\bigcup_{n}f^{\circ -n}(S)\right)\bigcap\hat{\Bbb{R}}}$
appeared in \eqref{union2} is countable and thus of zero entropy \cite[Proposition 5.1]{MR1736362}}. To get the countability,  notice that otherwise Lemma \ref{decomposition} implies that 
$\overline{\left(\bigcup_{n}f^{\circ -n}(S)\right)\bigcap\hat{\Bbb{R}}}$
contains a non-empty closed subspace consisting of condensation points that intersects 
$\left(\bigcup_{n}f^{\circ -n}(S)\right)\bigcap\hat{\Bbb{R}}$. So a real point in the backward orbit of a member of $S$ is a condensation point of 
$\overline{\left(\bigcup_{n}f^{\circ -n}(S)\right)\bigcap\hat{\Bbb{R}}}$
and thus that of the bigger space $\mathcal{J}_\Bbb{R}(f)$; a contradiction because $S$ is a subset of isolated points of 
$\mathcal{J}_\Bbb{R}(f)$ and its iterated preimages cannot be condensation points.
\end{proof}
\noindent
Here are the two lemmas we referred to during the proof: 
\begin{lemma}\label{no Siegel}
A rotation domain of a non-linear rational map with real coefficients which intersects the real axis  is a fixed Herman ring that furthermore contains the whole real circle $\hat{\Bbb{R}}$ as one of the leaves of its natural foliation.\footnote{Compare with the discussion in Example \ref{Blaschke1}.}
\end{lemma}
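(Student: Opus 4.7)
The plan is to exploit the fact that for $f\in\Bbb{R}(z)$ the complex conjugation $\sigma:z\mapsto\bar z$ commutes with $f$, and therefore permutes the Fatou components of $f$ while fixing $\hat{\Bbb{R}}$ pointwise. If a rotation domain $U$ meets $\hat{\Bbb{R}}$, then $U\cap\sigma(U)\supseteq U\cap\hat{\Bbb{R}}\neq\emptyset$, and since distinct Fatou components are disjoint we get $\sigma(U)=U$. So $U$ is automatically invariant under complex conjugation, and the rest of the argument will consist of teasing out what this symmetry forces.

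Next I would rule out the possibility that $U$ is a Siegel disk. If $U$ were a Siegel disk of period $p$, its center $z_0$ — the unique fixed point of $f^{\circ p}$ in $U$ — would have to be preserved by $\sigma$, since $\sigma$ commutes with $f^{\circ p}$ and sends $z_0$ into $U$. Hence $z_0\in\hat{\Bbb{R}}$, and its multiplier $(f^{\circ p})'(z_0)$ is a real number of modulus one, i.e.\ $\pm 1$. This contradicts the requirement that a Siegel multiplier be irrationally neutral, so $U$ must be a Herman ring. (Non-linearity of $f$ is used here to invoke the standard dichotomy for periodic rotation domains.)

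With $U$ now a Herman ring of period $p$, pick any $x_0\in U\cap\hat{\Bbb{R}}$ and let $L_0\subset U$ be the invariant leaf of the natural foliation through $x_0$. By irrationality of the rotation number, the $f^{\circ p}$-orbit of $x_0$ is dense in the Jordan curve $L_0$; since $f^{\circ p}\in\Bbb{R}(z)$ and $x_0$ is real, this orbit lies entirely in $\hat{\Bbb{R}}$, so $L_0=\overline{\{(f^{\circ p})^{\circ n}(x_0)\}_n}\subseteq\hat{\Bbb{R}}$. A Jordan curve contained in the topological circle $\hat{\Bbb{R}}$ must coincide with it, so $L_0=\hat{\Bbb{R}}$ is a leaf of the Herman ring and in particular $\hat{\Bbb{R}}\subset U$. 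Combining this with $f(\hat{\Bbb{R}})\subseteq\hat{\Bbb{R}}\subset U$ yields $\emptyset\neq f(\hat{\Bbb{R}})\subseteq U\cap f(U)$, so the Fatou components $U$ and $f(U)$ meet, forcing $U=f(U)$ and hence $p=1$. The delicate step is the third paragraph: one has to marry the abstract conformal description of the Herman ring (which gives denseness of the orbit in $L_0$) with the concrete real-analytic invariance $f^{\circ p}(\Bbb{R})\subseteq\Bbb{R}$, and then invoke planar topology in just the right way to identify $L_0$ with $\hat{\Bbb{R}}$; the Siegel exclusion and the periodicity at the end are essentially automatic once the conjugation symmetry is in place.
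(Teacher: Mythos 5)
Your proof is correct and in fact organizes the argument more carefully than the paper does. The central mechanism is the same in both: take a real point $x_0$ in the rotation domain, observe that its $f^{\circ p}$-orbit lies in $\hat{\Bbb{R}}$ but is dense in the invariant leaf $L_0$, so $L_0\subseteq\hat{\Bbb{R}}$ and hence (two Jordan curves on the sphere, one inside the other) $L_0=\hat{\Bbb{R}}$. Where you diverge is in the Siegel-disk exclusion and in the ordering. You rule out a Siegel disk by noting that $\sigma$-invariance of $U$ forces the center to be a real fixed point of $f^{\circ p}$ with real multiplier of modulus one, hence $\pm 1$, contradicting the irrational rotation; the paper instead waits until $\hat{\Bbb{R}}\subset U$ has been established and then argues topologically that a $\sigma$-invariant open disk containing $\hat{\Bbb{R}}$ would have to be all of $\hat{\Bbb{C}}$. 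Both are sound; yours is arguably more robust because it does not rely on having already shown $\hat{\Bbb{R}}\subset U$. On the ordering: the paper asserts ``the period of $U$ is one'' at the very start from $f(\hat{\Bbb{R}})\subseteq\hat{\Bbb{R}}$ alone, which is a non sequitur until one knows $\hat{\Bbb{R}}\subset U$; your proof correctly derives period $1$ only after establishing $\hat{\Bbb{R}}\subset U$, observing that then $f(\hat{\Bbb{R}})\subseteq\hat{\Bbb{R}}\subset U\cap f(U)$. So your write-up is a cleaner, properly sequenced version of essentially the same proof, with an independent Siegel exclusion.
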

\begin{proof} 
Suppose a rotation domain $U$ of $f\in\Bbb{R}(z)$ intersects $\hat{\Bbb{R}}$ along an open subset. As
$f(\hat{\Bbb{R}})\subseteq \hat{\Bbb{R}}$, $f(U)\cap U\neq\emptyset$ and thus the period of $U$ is one.
Aside from the center of a Siegel disk, the orbit closure of any point from a fixed rotation domain is an analytic Jordan curve which is the leaf passing through that point in the corresponding $f$-invariant foliation. So the circle $\hat{\Bbb{R}}$ contains such a leaf. But that leaf  is a topological circle as well, therefore they should coincide.
It is not hard to see that $U$ cannot be a Siegel disk: an open  topological disk in $\hat{\Bbb{C}}$ 
containing the circle $\hat{\Bbb{R}}$ and invariant under the reflection with respect to it has to be 
the whole Riemann sphere which is absurd. 
\end{proof}

\begin{lemma}\label{decomposition}
Let $X$ be a Hausdorff Baire space with a countable basis for its topology; for instance, take $X$ to be a separable complete metric space or a second countable locally compact Hausdorff topological space. Then $X$ can be uniquely written as a union $D\cup P$ of its closed subspaces where $D$ is countable and the closure of isolated points of $X$ and $P$ is perfect. In particular, when $X$ is furthermore compact and totally disconnected, $P$ would be a Cantor space.\footnote{Compare with \cite[chap. 2, Exercise 129]{MR1886084}.}
\end{lemma}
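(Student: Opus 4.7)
The plan is to prove this via the Cantor--Bendixson theorem. I will take $P$ to be the \emph{condensation part} of $X$---the set of points every open neighborhood of which meets $X$ in uncountably many points---and set $D := \overline{I}$, the closure of the set $I$ of isolated points of $X$.

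First I would establish that $X \setminus P$ is open and countable. Openness is immediate: if $x \notin P$, then some open neighborhood $U$ of $x$ meets $X$ in only countably many points, and every $y \in U$ also has $U$ as a countable neighborhood, so $U \subseteq X \setminus P$. For countability I would shrink each such $U$ to a basic open set $B_x$ drawn from the countable basis $\mathcal{B}$; the distinct $B_x$'s form an at most countable subfamily of $\mathcal{B}$, each is countable, and $X \setminus P$ is contained in their union, hence itself countable. This also shows $P$ is closed. Perfectness of $P$ then follows from a cardinality count: for $x \in P$ and any neighborhood $U$, $|U \cap X|$ is uncountable while $|U \cap (X \setminus P)|$ is countable, forcing $|U \cap P|$ to be uncountable and thus making $x$ a limit point of $P$.

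Next I would show $X = D \cup P$ by proving that $I$ is dense in $X \setminus P$. The key tool is that the open subspace $X \setminus P$ inherits the Baire property and is a countable Hausdorff space; in such a space any non-empty open set is a countable union of singletons, and the Baire property forces one of these singletons to have non-empty interior. The isolated points of the subspace $X \setminus P$ coincide with $I$ (every isolated point of $X$ automatically lies in $X \setminus P$, and $X \setminus P$ is open in $X$), so $I$ is dense in $X \setminus P$ and hence $\overline{I} \supseteq X \setminus P$, yielding $D \cup P = X$. Uniqueness of the decomposition follows by characterizing $P$ intrinsically as the largest perfect closed subspace of $X$, using the standard dyadic Cantor-embedding argument to show that every closed perfect subspace of a second-countable Hausdorff space consists of condensation points.

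The corollary for compact totally disconnected $X$ then reduces to Brouwer's classical characterization of the Cantor set as the unique non-empty compact metrizable perfect totally disconnected space. The main obstacle I anticipate is the assertion that $D = \overline{I}$ is \emph{countable}: in general, the closure of a countable set need not be countable. I expect this step to require additional input beyond the bare topological hypotheses---either tacitly replacing $D$ by the scattered part $X \setminus P$ itself (which is always countable but is only open, not closed), or invoking the context-specific structure of the application in Theorem \ref{Structure}, where $I$ arises as iterated preimages of the finite set $S$ of isolated periodic points, so that any condensation point of $\overline{I}$ would also be a condensation point of $\mathcal{J}_\Bbb{R}(f)$, contradicting the isolation of $S$ in that ambient set.
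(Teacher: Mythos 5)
Your anticipated obstacle is real, and it is worth stating plainly: the lemma as written is false, because the closure $D=\overline{I}$ of the set $I$ of isolated points can be uncountable even when $X$ is a compact metric space. Take, for example,
\begin{equation*}
X=\bigl([0,1]\times\{0\}\bigr)\;\cup\;\bigcup_{n\geq 1}\Bigl(\bigl\{\tfrac{k}{2^n}:0\leq k\leq 2^n\bigr\}\times\{\tfrac{1}{n}\}\Bigr)\subset\Bbb{R}^2.
\end{equation*}
This is compact; its isolated points are the countably many $(k/2^n,1/n)$; yet every point of the segment $[0,1]\times\{0\}$ is a limit of them, so $\overline{I}=X$ is uncountable. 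The paper's proof of countability asks one to ``pick a countable basis $\mathcal{U}$ such that no two isolated points of $X$ lie in the same element of $\mathcal{U}$,'' but such a basis cannot exist whenever a non-isolated point of $X$ lies in $\overline{I}$ (already for $\{0\}\cup\{1/n:n\geq 1\}$ every neighborhood of $0$ contains infinitely many isolated points), so that step of the paper's argument does not go through. What is correct, and is proved by the route both you and the paper take, is that the scattered set $X\setminus P$ is open and countable and that $X=\overline{I}\cup P$ with $P$ perfect closed; but $\overline{I}$ can strictly contain $X\setminus P$ and need not be countable.

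Apart from that point, your proposal tracks the paper's proof: both define $P$ as the set of condensation points, both verify closedness and perfectness by a cardinality count, and both use the Baire property of the open countable subspace $X\setminus P$ to see that $I$ is dense there, giving $X=\overline{I}\cup P$. For uniqueness you appeal to a Cantor-scheme embedding to show that a nonempty closed perfect subspace consists of condensation points, whereas the paper instead notes that a closed perfect subspace of one of the example spaces is again Baire and applies its fact that every point of a nonempty perfect Hausdorff Baire space is a condensation point; both routes work in the intended settings. Your two proposed repairs are also the right ones: either restate the lemma with the ``countable part'' being the scattered set $X\setminus P$ (countable, open, but not closed), or, as the paper in fact does inside the proof of Theorem~\ref{Structure}, re-derive the countability of the relevant closed subsystem from the dynamics — there the set of isolated points is a union of backward orbits of the finite set $S$, and a condensation point of its closure would force a condensation point among the iterated preimages of $S$, contradicting their isolation in $\mathcal{J}_\Bbb{R}(f)$. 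So the application survives, but the abstract lemma needs to be weakened.
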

\begin{proof} 
Recall that a point of a topological space is called a \textit{condensation point} if  there are uncountably many points in any open neighborhood of it. Obviously, the set of condensation points of $X$ is closed. Denote it by $P$ and let $D$ be the closure of isolated points of $X$. It is easy to show that $D$ is countable: pick a countable basis $\mathcal{U}$ for the topology of $X$ 
such that no two isolated points of $X$ lie in the same element of $\mathcal{U}$. If $D$ is uncountable, there will be a 
$U\in\mathcal{U}$ with uncountably many points of $D$ but with precisely one isolated point of $X$. Removing that point yields an open in $X$ disjoint from the subset of isolated points but intersecting its closure $D$ which is impossible. \\
\indent
We need the following elementary fact:
\begin{itemize}
\item \textit{Each point of a perfect Hausdorff Baire space is a condensation point.}
\vspace{2mm}
\\
\textit{Proof.} Otherwise, there exists a point $x$ admitting an open neighborhood containing only countably many points of $X$, say $x_n$'s. Each open subset $X-\{x_n\}$ is dense since
$x_n$ is not isolated while $\bigcap_n\left(X-\{x_n\}\right)$ is not dense because its closure misses $x$; this contradicts the Baire property.
\end{itemize} 
Equipped with this, the uniqueness part is immediate: If one alternatively write $X$ as $D\cup P'$; $P'$ is a closed subspace of a separable metric or of a second countable locally compact Hausdorff topological space and is thus a space with the same properties. These properties guarantee that $P'$ is a Baire space too.  Hence by the above fact any point of  the perfect space $P'$ is a condensation point of $P'$ and thus that of $X$. Conversely, a condensation point $x$ of $X$ has to lie in $P'$ as otherwise 
$X-P'\subseteq D$ would be a countable open neighborhood of $x$ in $X$.\\
\indent
Next, we establish the remaining properties claimed in the lemma. The union $D\cup P$ is the whole $X$ since if $x\notin D$, then $x$ admits an open neighborhood $U$ with no isolated point of $X$. So in the subspace topology $U$ is a perfect space. Being an open subset of the Baire space $X$, $U$ is a Baire space as well and so the aforementioned fact indicates that $x$ is a condensation point of it and thus a condensation point of the ambient space $X$. Hence $x$ has to lie in $P$. Finally, notice that $P$ is perfect: any open neighborhood in $X$ of a point $x\in P$  has uncountably many points of 
$X=D\cup P$ with at most countably many of them in $X-P\subseteq D$ so it intersects $P$ in infinitely many points.\\
\indent
To finish the proof, notice that when $X$ is totally disconnected, the same has to be true for the subspace $P$ which then would be a second countable compact Hausdorff (thus metrizable) totally disconnected perfect space and hence homeomorphic to the standard Cantor set  
\cite[chap. 2, Theorem 69]{MR1886084}. 
\end{proof}
Going back to the density of preperiodic points established above for the Cantor part of the dynamics $Z$, the same can be said both for those intervals $I$ in \eqref{union1} which are periodic under $f$ and also for the case where $\mathcal{J}_{\Bbb{R}}(f)$
is the whole circle. This is the content of the proposition below that relies on certain results from real one-dimensional dynamics:   
\begin{proposition}\label{technical1}
Let $f\in\Bbb{R}(z)$ be a rational map and $I$ a non-degenerate compact  connected subset of the real Julia set $\mathcal{J}_{\Bbb{R}}(f)$ which is invariant under (an iterate of) $f$. Then there is a closed subsystem $P\subseteq I$ in which preperiodic points are dense and
$$h_{\rm{top}}\left(f\restriction_I:I\rightarrow I\right)=h_{\rm{top}}\left(f\restriction_P:P\rightarrow P\right).$$
\end{proposition}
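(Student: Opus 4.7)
The plan is to reduce to a standard result from one-dimensional real dynamics, namely Misiurewicz's horseshoe construction for piecewise monotone interval maps. First, by replacing $f$ with the iterate $f^{\circ p}$ that fixes the cycle containing $I$, one may assume $f(I)\subseteq I$; preperiodicity is insensitive to this replacement, and the extra factor of $p$ in topological entropy cancels on both sides of the claimed equality, so no generality is lost. The restriction $f\restriction_I$ is then a continuous piecewise monotone self-map of the compact interval $I$, since the rational map $f$ has only finitely many critical points. Put $h:=h_{\rm top}(f\restriction_I)$.

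When $h=0$, take $P$ to be any fixed point of $f\restriction_I$, whose existence is guaranteed by the intermediate value theorem; this is a closed invariant subsystem of zero entropy with trivially dense (pre)periodic points. The substantive case is $h>0$. Invoking Misiurewicz's theorem on horseshoes, for each $\varepsilon>0$ one obtains an integer $n=n_\varepsilon\geq 1$ and pairwise disjoint compact subintervals $J_1,\ldots,J_k$ of $I$, with $k>e^{n(h-\varepsilon)}$, such that $f^{\circ n}(J_i)\supseteq J_1\cup\cdots\cup J_k$ for every $i$. The associated closed $f^{\circ n}$-invariant subset
\[
\Lambda_\varepsilon:=\{x\in I\mid f^{\circ nm}(x)\in J_1\cup\cdots\cup J_k\text{ for all }m\geq 0\}
\]
is a Cantor set on which $f^{\circ n}$ is topologically conjugate to the one-sided full shift on $k$ symbols; in particular, periodic points of $f^{\circ n}$ (which are periodic points of $f$) are dense in $\Lambda_\varepsilon$ and $h_{\rm top}(f^{\circ n}\restriction_{\Lambda_\varepsilon})=\log k>n(h-\varepsilon)$. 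The compact $f$-invariant set $\tilde\Lambda_\varepsilon:=\bigcup_{j=0}^{n-1}f^{\circ j}(\Lambda_\varepsilon)\subseteq I$ therefore carries dense periodic points of $f$ and satisfies $h_{\rm top}(f\restriction_{\tilde\Lambda_\varepsilon})>h-\varepsilon$.

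Finally, assemble $P:=\overline{\bigcup_{m\geq 1}\tilde\Lambda_{1/m}}$, which is closed and $f$-invariant as the closure of a union of closed $f$-invariant sets under a continuous map. The sandwich $h-1/m<h_{\rm top}(f\restriction_{\tilde\Lambda_{1/m}})\leq h_{\rm top}(f\restriction_P)\leq h_{\rm top}(f\restriction_I)=h$, valid for every $m$, pins down $h_{\rm top}(f\restriction_P)=h$. For density: given any non-empty relatively open $U\subseteq P$, density of $\bigcup_m\tilde\Lambda_{1/m}$ in $P$ forces $U\cap\tilde\Lambda_{1/m}\neq\emptyset$ for some $m$, and density of periodic points in $\tilde\Lambda_{1/m}$ then furnishes a periodic (hence preperiodic) point of $f$ inside $U$. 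The main obstacle will be locating and citing Misiurewicz's horseshoe theorem in precisely the iterated form stated above; once that input is on hand, the construction of $P$ and the verification of its properties are elementary bookkeeping.
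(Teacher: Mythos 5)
Your approach is genuinely different from the paper's. The paper sets $P$ to be the closure of \emph{all} preperiodic points of $f\restriction_I$ and then shows the complement $I\setminus P$ cannot carry entropy: it invokes the no-wandering-intervals theorem to reduce to finitely many periodic arcs, and then the Devaney-chaos characterization of positive entropy from \cite{2015arXiv150403001R} to force each such arc to have zero entropy (else it would contain periodic points, contradicting disjointness from $P$). You instead build $P$ ``from below'' as a limit of Misiurewicz horseshoes. Both approaches work, but yours is more constructive: it produces explicit Cantor subsystems capturing $h_{\rm top}(f\restriction_I)-\varepsilon$ of the entropy, which is a quantitative statement the paper's argument does not give directly.

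Two points need attention. First, Misiurewicz's theorem gives only that the itinerary map $\pi:\Lambda_\varepsilon\to\{1,\dots,k\}^{\mathbb{N}}$ is a continuous surjective \emph{semi}-conjugacy; $\Lambda_\varepsilon$ need not be a Cantor set, and the map need not be injective, so your assertion that ``$f^{\circ n}$ is topologically conjugate to the full shift on $\Lambda_\varepsilon$'' with dense periodic points is stronger than what the cited theorem delivers. The fix is cheap: replace $\Lambda_\varepsilon$ by $Q_\varepsilon:=\overline{\{\text{periodic points of }f^{\circ n}\text{ in }\Lambda_\varepsilon\}}$. Since each periodic itinerary is realized by a periodic point (by the intermediate value theorem applied to the appropriate nested branch interval), $\pi(Q_\varepsilon)$ is a compact set containing all periodic sequences, hence all of $\{1,\dots,k\}^{\mathbb{N}}$; so $\pi\restriction_{Q_\varepsilon}$ is still a surjective semi-conjugacy, giving $h_{\rm top}(f^{\circ n}\restriction_{Q_\varepsilon})\geq\log k$, and $Q_\varepsilon$ has dense periodic points by construction. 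Second, the hypotheses allow $I$ to be all of $\hat{\mathbb{R}}$ (a circle), not just a compact interval, and indeed Corollary \ref{technical2} applies the proposition in exactly this case; your reduction to ``the compact interval $I$'' and your appeal to the IVT for a fixed point in the $h=0$ case silently exclude this. One must either quote the circle-map version of the horseshoe theorem, or note that for a circle map of zero entropy with no periodic orbit one simply takes $P=\emptyset$ as the paper does. With these repairs the argument is sound.
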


\begin{proof}
{The proof is based on some of the ideas developed in the proof of Theorem \ref{Structure} and the result below adapted from
\cite[\S7.1]{2015arXiv150403001R}:

\textit{For a continuous self-map of an interval or circle the following are equivalent:
\begin{itemize}
\item the topological entropy is positive;
\item there is an infinite closed subsystem which is chaotic in the sense of Devaney. 
\end{itemize}}
\noindent
Of course, if $f\restriction_I$ does not admit any periodic point, the preceding result indicates that its entropy is zero and so $P=\emptyset$ works. Hence, suppose there are periodic orbits. 
The set of preperiodic points of 
$f\restriction_I:I\rightarrow I$
is completely invariant. Its closure 
$P:=\overline{\left\{x\in I\,|\, x \text{ is preperiodic}\right\}}$
is therefore forward-invariant.
If $P=I$ we are done. Otherwise, the proper   subset $I-P$ of the circle $\hat{\Bbb{R}}$ is a  union of countably many open or half-open subintervals. The image of any of them under $f$ is another non-degenerate subinterval of $I$ whose interior is away from $P$: Otherwise, there would exist an open subinterval $I'$ of $I-P$ with $f(I')$ being an open subinterval that intersects $P$ and thus contains a preperiodic point of $f$. But $I'$ then should have a preperiodic point itself which is impossible.  We conclude that $f$ acts on the set of closed arcs of 
$\mathcal{J}(f)\subseteq\hat{\Bbb{R}}$
that are closures of connected components of $I-P$. Then,
the exact same argument as in the proof of Theorem \ref{Structure} based on the non-existence of wandering intervals implies that only finitely many of these closures are periodic (i.e. preserved by an iterate),
and under iteration any arbitrary interval component of $I-P$ eventually maps into the closure of a component of $I-P$ which is periodic under $f$.
Interval components of $I-P$ which are not periodic are away from the non-wandering set of  $f\restriction_I$
and hence do not contribute to the entropy.
Picking an interval component of $I-P$ whose closure is preserved by an iterate of $f$, and next 
applying the theorem just quoted to the system obtained by restricting that iterate to the compact arc of the circle which is the closure,  we deduce that the entropy of this restriction should be zero; if not, that component of $I-P$ must contain periodic points. Therefore, 
${\rm{NW}}\left(f\restriction_I:I\rightarrow I\right)$
is contained in the union of $P$ and the $f$-invariant union of finitely many closed arcs that form a subsystem of zero entropy. Consequently, the entropy of $f\restriction_I$ 
coincides with that of its subsystem $f\restriction_P$.\footnote{Here and later in this section we invoke this simple fact that in a compact topological system written as a union of finitely many closed subsystems, the topological entropy coincides with the maximum of the entropies of these subsystems \cite[Proposition 2.5.5]{MR1963683}.}}
\end{proof}

\begin{corollary}\label{technical2}
Given a rational map $f\in\Bbb{R}(z)$ of degree $d\geq 2$, the system 
$f\restriction_{\hat{\Bbb{R}}}:\hat{\Bbb{R}}\rightarrow\hat{\Bbb{R}}$
has a closed (possibly vacuous) subsystem of the same topological entropy in which preperiodic points are dense.  
\end{corollary}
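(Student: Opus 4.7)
The plan is to reduce to Theorem \ref{Structure} and Proposition \ref{technical1} via the standard fact that for a compact system the non-wandering set carries the full entropy. I will split on the three possibilities for $\mathcal{J}_{\Bbb{R}}(f)$: empty, the whole circle, or a proper non-empty subset.

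First, the trivial reductions. If $\mathcal{J}_{\Bbb{R}}(f)=\emptyset$, then $\hat{\Bbb{R}}$ lies entirely in the Fatou set; by Lemma \ref{no Siegel} a real rotation domain must be a Herman ring containing all of $\hat{\Bbb{R}}$ (entropy zero), and otherwise every point of $\hat{\Bbb{R}}$ is in an attracting or parabolic basin, so the non-wandering set of $f\restriction_{\hat{\Bbb{R}}}$ reduces to the finitely many non-repelling periodic cycles on $\hat{\Bbb{R}}$. In either situation $h_{\rm top}\bigl(f\restriction_{\hat{\Bbb{R}}}\bigr)=0$ and $P=\emptyset$ (or the set of these cycles) works. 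If $\mathcal{J}_{\Bbb{R}}(f)=\hat{\Bbb{R}}$, then the whole circle is compact, connected, non-degenerate and $f$-invariant, and I invoke Proposition \ref{technical1} directly with $I=\hat{\Bbb{R}}$ (its proof used non-existence of wandering intervals, which is valid for circle maps as well).

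The substantial case is $\emptyset\neq\mathcal{J}_{\Bbb{R}}(f)\subsetneq\hat{\Bbb{R}}$, where I apply Theorem \ref{Structure} to write
$$\mathcal{J}_{\Bbb{R}}(f)=\Bigl(\bigsqcup_{I\in\mathcal{I}} I\Bigr)\bigcup Z,\qquad Z=C\cup\overline{\textstyle\bigcup_n f^{\circ -n}(S)\cap\hat{\Bbb{R}}}.$$
Recall that only finitely many intervals $I_1',\dots,I_k'\in\mathcal{I}$ are periodic (of periods $p_1,\dots,p_k$) and every other $I\in\mathcal{I}$ eventually lands in one of them. Since any point lying in a non-periodic $I\in\mathcal{I}$ is wandering for $f\restriction_{\hat{\Bbb{R}}}$, the non-wandering set of $f\restriction_{\hat{\Bbb{R}}}$ is contained in the finite union
$$\bigl(\bigcup_{j=1}^k \mathcal{O}(I_j')\bigr)\cup Z\cup\{\text{real non-repelling cycles of }f\text{ in the Fatou set}\},$$
where $\mathcal{O}(I_j')$ denotes the orbit of $I_j'$ under $f$. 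The last piece contributes zero entropy, so $h_{\rm top}\bigl(f\restriction_{\hat{\Bbb{R}}}\bigr)$ equals the maximum of $h_{\rm top}\bigl(f\restriction_{\mathcal{O}(I_j')}\bigr)=h_{\rm top}\bigl(f^{\circ p_j}\restriction_{I_j'}\bigr)$ over $j$ and $h_{\rm top}(f\restriction_Z)=h_{\rm top}(f\restriction_C)$ (the last equality because $Z\setminus C$ is countable and hence of zero entropy by \cite[Proposition 5.1]{MR1736362}). For each $j$, Proposition \ref{technical1} applied to $f^{\circ p_j}\restriction_{I_j'}$ yields a closed $f^{\circ p_j}$-invariant subset $P_j\subseteq I_j'$ of the same entropy in which $f^{\circ p_j}$-preperiodic points (equivalently $f$-preperiodic points) are dense; then $\tilde P_j:=\bigcup_{i=0}^{p_j-1}f^{\circ i}(P_j)$ is $f$-invariant, closed, of entropy equal to $h_{\rm top}\bigl(f^{\circ p_j}\restriction_{I_j'}\bigr)$, and still has dense preperiodic points. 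Meanwhile, the Cantor subsystem $C$ from Theorem \ref{Structure} has dense preperiodic points by the density of $Z''$ in $Z'$ established in the proof of that theorem. Setting
$$P:=C\cup\bigcup_{j=1}^k \tilde P_j$$
gives a closed $f$-invariant subset of $\hat{\Bbb{R}}$ with dense preperiodic points whose entropy equals $h_{\rm top}\bigl(f\restriction_{\hat{\Bbb{R}}}\bigr)$, as required.

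The main obstacle I anticipate is the bookkeeping in the last case: one has to check that discarding the infinitely many wandering intervals in $\mathcal{I}$ and the countable residual part of $Z$ does not lose any entropy, and this is where the absence of wandering intervals for analytic circle maps (used in the proofs of Theorems \ref{Structure} and \ref{technical1}) together with \cite[Proposition 5.1]{MR1736362} on entropy of countable systems is essential. Everything else is a direct application of results already established in the appendix.
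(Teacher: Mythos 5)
Your overall strategy matches the paper's: reduce to $\mathcal{J}_{\Bbb{R}}(f)$, split off the finitely many periodic interval components, feed each into Proposition \ref{technical1}, and glue the results to the totally disconnected part from Theorem \ref{Structure}. The places where you diverge (building $\tilde P_j:=\bigcup_{i=0}^{p_j-1}f^{\circ i}(P_j)$ interval-by-interval rather than passing to a common iterate $f^{\circ p}$ and then pulling back as the paper does with $B=\bigcup_v f^{-\circ v}(A)$) are harmless variants. Note in passing that the intermediate identities $h_{\rm top}\bigl(f\restriction_{\mathcal{O}(I_j')}\bigr)=h_{\rm top}\bigl(f^{\circ p_j}\restriction_{I_j'}\bigr)$ and $h_{\rm top}\bigl(f\restriction_{\tilde P_j}\bigr)=h_{\rm top}\bigl(f^{\circ p_j}\restriction_{I_j'}\bigr)$ are each off by a factor $1/p_j$; the errors cancel, so your final equality is unaffected, but the statements as written are wrong.

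There is, however, one genuine gap: you set $P:=C\cup\bigcup_j\tilde P_j$ using the Cantor piece $C$ of $Z$, and justify density of preperiodic points in $C$ by appealing to ``density of $Z''$ in $Z'$.'' That argument only shows preperiodic points are dense in $Z=\overline{Z'}$ (which is exactly what Theorem \ref{Structure}(3) asserts); it does \emph{not} show that preperiodic points \emph{lying in $C$} are dense in $C$. Since $C$ is a proper closed subset of $Z$ (the complement $Z\setminus C$ sits in the countable set $D$, which can contain isolated points of $\mathcal{J}_{\Bbb{R}}(f)$, and those are automatically in $Z''$), it is a priori possible that every preperiodic point near a given condensation point of $Z$ lies in $Z\setminus C$, in which case preperiodic points of the subsystem $f\restriction_C$ would fail to be dense. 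The paper sidesteps this by using $Z$ rather than $C$ in the final union; replacing $C$ by $Z$ in your definition of $P$ costs nothing in entropy (as you yourself note, $h_{\rm top}(f\restriction_Z)=h_{\rm top}(f\restriction_C)$) and gives the density statement for free from Theorem \ref{Structure}(3). With that one substitution your argument goes through.
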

\begin{proof}
{This has been established when the whole real circle is Julia in Proposition \ref{technical1} and is an immediate consequence of Theorem \ref{Structure} when the Julia set intersects $\hat{\Bbb{R}}$ non-trivially: 
Intervals $I\in\mathcal{I}$ that are not preserved by any iterate of $f$ are away from the non-wandering set of dynamics and aside from them, we have only finitely many intervals in $\mathcal{I}$, say $I_1,\dots,I_n$ and all of them are invariant under some sufficiently divisible iterate $f^{\circ p}$.
Thus the non-wandering set of the system 
$f^{\circ p}\restriction_{\mathcal{J}_\Bbb{R}(f)}:\mathcal{J}_\Bbb{R}(f)\rightarrow\mathcal{J}_\Bbb{R}(f)$ is contained  the union of  closed subsystems $Z$ (admitting a dense subset of preperiodic points according to Theorem \ref{Structure}) and $I_i$'s $(1\leq i\leq n)$.  Moreover, by Proposition \ref{technical1} the system
$f^{\circ p}\restriction_{I_i}$
has a  closed subsystem $P_i$ of the same entropy in which preperiodic points are dense. 
So $A:=\left(\bigsqcup_{i=1}^nP_i\right)\bigcup Z$ is a closed subsystem of 
$f^{\circ p}\restriction_{\hat{\Bbb{R}}}$ in which preperiodic points are dense and then
$B:=\bigcup_{v=1}^{p}f^{-\circ v}(A)$ 
would be a closed invariant subsystem of 
$f\restriction_{\hat{\Bbb{R}}}$
satisfying the same property. This is the desired subsystem:
\small
\begin{equation*}
\begin{split}
&h_{\rm{top}}\left(f\restriction_{\hat{\Bbb{R}}}\right)=
h_{\rm{top}}\left(f\restriction_{\mathcal{J}_\Bbb{R}(f)=\mathcal{J}(f)\cap\hat{\Bbb{R}}}\right)=
\max\left\{h_{\rm{top}}\left(f\restriction_{\sqcup_{i=1}^nI_i}\right),
h_{\rm{top}}\left(f\restriction_{Z}\right)\right\}\\
&=\frac{1}{p}\max\left\{h_{\rm{top}}\left(f^{\circ p}\restriction_{\sqcup_{i=1}^nI_i}\right),
h_{\rm{top}}\left(f^{\circ p}\restriction_{Z}\right)\right\}
=\frac{1}{p}\max\left\{\max_{1\leq i\leq n}\left\{h_{\rm{top}}\left(f^{\circ p}\restriction_{I_i}\right)\right\}
,h_{\rm{top}}\left(f^{\circ p}\restriction_{Z}\right)\right\}
\\
&
=\frac{1}{p}\max\left\{\max_{1\leq i\leq n}\left\{h_{\rm{top}}\left(f^{\circ p}\restriction_{P_i}\right)\right\}
,h_{\rm{top}}\left(f^{\circ p}\restriction_{Z}\right)\right\}
=\frac{1}{p}\,h_{\rm{top}}\left(f^{\circ p}\restriction_{(\sqcup_{i=1}^n P_i)\cup Z}\right)
=\frac{1}{p}\,h_{\rm{top}}\left(f^{\circ p}\restriction_A\right)\\
&\leq \frac{1}{p}\,h_{\rm{top}}\left(f^{\circ p}\restriction_{\cup_{v=1}^{p}f^{-\circ v}(A)}\right)=
h_{\rm{top}}\left(f\restriction_{\cup_{v=1}^{p}f^{-\circ v}(A)}\right)
=h_{\rm{top}}\left(f\restriction_B\right).
\end{split}
\end{equation*}
\normalsize
}
\end{proof}

\begin{proof}[Proof of Theorem \ref{temp2}]
Immediately follows from Theorem \ref{Structure} and Corollary \ref{technical2}.
\end{proof}

\subsection{Examples and remarks}\label{examples subsection}
\begin{remark}
Theorem \ref{Structure} indicates that the real entropy  
may be written as:
\begin{equation}\label{Cantor and Interval}
h_\Bbb{R}=\max\{h_{\rm{Interval}},h_{\rm{Cantor}}\}
\end{equation}
where $h_{\rm{Interval}}$, $h_{\rm{Cantor}}$
are the contributions of interval and Cantor subsystems of $\mathcal{J}_\Bbb{R}(f)$ to the real entropy.
Unlike $h_\Bbb{R}$, the functions 
$$h_{\rm{Interval}}:\mathcal{M}'_d-\mathcal{S}'\rightarrow \left[0,\log(d)\right],
\quad h_{\rm{Cantor}}:\mathcal{M}'_d-\mathcal{S}'\rightarrow \left[0,\log(d)\right]$$
are not  continuous: The real Julia set of a hyperbolic rational map with real coefficients lacks interval components (cf. Remark \ref{elaborate}) and so  at the points determined by hyperbolic rational maps one has 
$h_{\rm{Interval}}=0, h_{\rm{Cantor}}=h_\Bbb{R}$. Among real polynomials, such points constitute a dense subset 
(the \textit{density of hyperbolicity} \cite{MR2342693}) while  there definitely exist real polynomials whose real Julia sets are union of intervals; e.g. the $d^{\rm{th}}$ Chebyshev polynomial whose Julia set is the interval $[-2,2]$ 
where $h_{\rm{Interval}}=h_\Bbb{R}=\log(d), h_{\rm{Cantor}}=0$.
\end{remark}

\begin{remark}\label{elaborate}
The dynamics of the interval subsystems of the real Julia set can be investigated more thoroughly. Suppose $I\subseteq\mathcal{J}_{\Bbb{R}}(f)$ is a compact non-degenerate interval invariant under (an iterate of) $f$. 
There is a description of ``typical'' ``attractors'' of  interval maps available from  real one-dimensional  dynamics that can be applied to $f\restriction_I:I\rightarrow I$. It has to be mentioned that here ``typical'' can be interpreted in either the topological sense or the metric (measure-theoretic) sense and ``attractors'' also arise in both contexts and they might be more complicated than an attracting periodic orbit; see \cite{MR790735} for a treatment of the general notion of an attractor for a continuous self-map of a differentiable manifold.
According to \cite[Theorems 1.2, 1.3]{MR2600680}, 
for points from a  ``large''  subset ($2$nd category  or of full Lebesgue measure according to the context) $Y$ of $I$,  the $\omega$-limit sets $\omega(x)\, (x\in Y)$ can be described as either of the following:
\begin{itemize}
\item $\omega(x)$ is a periodic orbit; 
\item $\omega(x)$ is a minimal Cantor set of Lebesgue measure zero and coincides with $\omega(c)$ for a critical point $c$;
\item $\omega(x)$ is a finite union of intervals on which $f$ acts as a topologically transitive transformation. 
\end{itemize}
\noindent
The first possibility definitely does not occur in our situation since a periodic orbit to which another orbit accumulates is attracting for the interval map and thus admits a (perhaps parabolic) basin that must be away from the Julia set and hence from the $I$; in particular, for a hyperbolic $f\in\Bbb{R}(z)$ there is no such interval component and 
the real Julia set is totally disconnected.
So a typical $\omega$-limit set fits in one of the last two categories.
In the latter, there is a subinterval on which some iterate of $f$ restricts to a transitive (and hence chaotic) interval map. Such a map is of positive entropy \cite[Theorem 4.77]{2015arXiv150403001R}; furthermore, it is conjugate to a piecewise linear map 
\cite[chap. III, Theorem 4.1]{MR1239171}. 
A quadratic polynomial example  would be $x\mapsto 4x(1-x)$ 
from the logistic family or equivalently 
the Chebyshev polynomial $x\mapsto x^2-2$ which on the Julia set $[-2,2]$ of the $z\mapsto z^2-2$ restricts to a unimodal map conjugate to the tent map $x\in [-1,1]\mapsto 1-|2x-1|\in [-1,1]$. \\
\indent
As for Cantor attractors, first notice that if an interval subsystem of a real rational map admits either a residual or full Lebesgue measure subset of points whose  $\omega$-limit sets are Cantor, then all points of the interval are definitely Julia because the $\omega$-limit set of a Fatou point is either finite or a topological circle. 
If there is a residual subset of points whose $\omega$-limit sets coincide with the Cantor $\omega$-limit set $\omega(c)$ of a critical point, then it can be shown that map $f$ is \textit{infinitely renormalizable at $c$} and the attractor is called \textit{solenoidal}; see \cite{MR1239171, MR2600680} for details.  Such an example from the logistic family is provided by the
\textit{Feigenbaum map} at the end of the first period-doubling cascade whose entropy is zero and its real Julia set is $[0,1]$.
  It turns out that there are also examples of the so called \textit{wild  attractor} which are non-solenoidal Cantor metric attractors: There are non-renormalizable unimodal polynomial maps on the unit interval for which $\omega(x)$ is a Cantor set for points from a full Lebesgue measure subset but is the whole interval for points from a $2$nd category subset. The entropy is positive due to transitivity. There is no such an example in the quadratic family. See \cite{MR1370759} for the details. \\
\end{remark}

\begin{example}\label{last}
For a real quadratic polynomial, the Julia set cannot intersect the real axis along more than one interval. By contrast, 
the real Julia set of a real cubic polynomial can admit infinitely many interval components:
Suppose  the Julia set of $f$ is disconnected and there is an invariant interval component  $I$ of $\mathcal{J}_{\Bbb{R}}(f)$ for which the dynamics of $f\restriction_I$ is unimodal. The interval would have infinitely many interval components in its backward orbit due to the fact that a real cubic equation  always has an odd number of real roots and 
also because no component of $f^{-\circ k}(I)$ other than $I$ itself has a critical point (otherwise both critical orbits would be in the Julia set); a fact that shows that for every $l>k>0$ the subsets
 $f^{-\circ k}(I)\cap\hat{\Bbb{R}}-I$ and $f^{-\circ l}(I)\cap\hat{\Bbb{R}}-I$ are disjoint since otherwise there would be an interval component of $\mathcal{J}_\Bbb{R}(f)$ other than $I$  invariant under an iterate which then according to the first part of Theorem \ref{Structure} must contain a critical point of $f$. An inductive argument now shows that there are countably many disjoint intervals $\left\{I_0\right\}_{k\geq 0}$ with $I_0=I$ and $f^{-\circ k}(I)\cap\hat{\Bbb{R}}=I_0\sqcup I_k$
 for every $k>0$.
 Figure \ref{fig:2} demonstrates such an example.

\begin{figure}[ht!]
\center
\includegraphics[width=12cm, height=5cm]{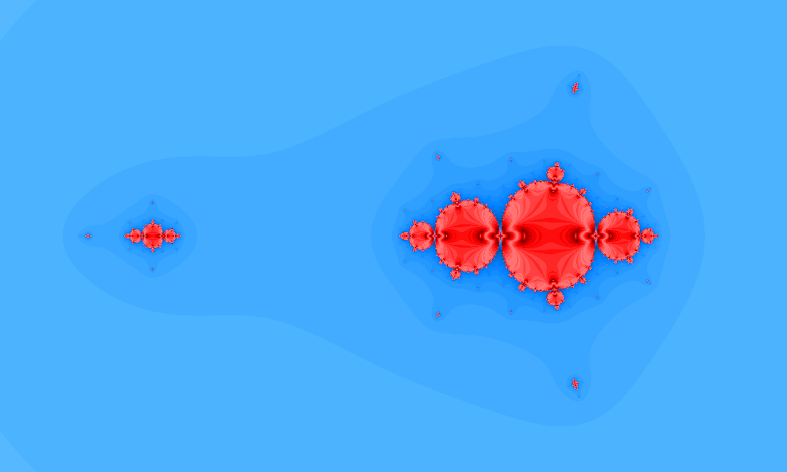}
\caption{The Julia set of $0.19 z^3 + z^2 - 1$; the red areas are attracting basins in the filled Julia set. See Example \ref{last}.}
\label{fig:2}
\end{figure}

\end{example}

\section{Entropy values over $\mathcal{M}'_{d,s}$}\label{appendix B}
The goal of this appendix is to prove a more refined version of Proposition \ref{dimension} that addresses the real entropy values realized by degree $d$ real rational maps whose restrictions to $\hat{\Bbb{R}}$ are of degree $s$:

\begin{proposition}\label{entropy values}
Let $s\in\left\{-d,\dots,0,\dots,d\right\}$ be of the same parity as $d$. Restricted to $\mathcal{M}'_{d,s}-\mathcal{S}'$,
the range of the real entropy function $h_\Bbb{R}$ is 
$\left[\log\left(\max(|s|,1)\right),\log(d)\right]$.
\end{proposition}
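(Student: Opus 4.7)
The strategy splits into establishing the bounds $\log\max(|s|,1)\le h_\Bbb{R}\le\log(d)$ and then, using continuity of $h_\Bbb{R}$ from Proposition \ref{dimension} together with connectedness of $\mathcal{M}'_{d,s}-\mathcal{S}'$ from Proposition \ref{degree components}, reducing surjectivity onto the stated interval to exhibiting representatives that realize both endpoints.

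The upper bound $h_\Bbb{R}(f)\le\log(d)$ is immediate from $h_{\rm{top}}(f\restriction_{\hat{\Bbb{R}}})\le h_{\rm{top}}(f)=\log(d)$. For the lower bound $h_\Bbb{R}(f)\ge\log|s|$ when $|s|\ge 2$, I plan to give the self-contained lap-counting argument promised in Remark \ref{degree-bound} (in place of the $C^1$ manifold result of \cite{MR0458501}). Lifting $f\restriction_{\hat{\Bbb{R}}}$ via the bijection $[0,1)\to\hat{\Bbb{R}},\,x\mapsto{\rm{e}}^{2\pi{\rm{i}}x}$ to a continuous $F:\Bbb{R}\to\Bbb{R}$ with $F(x+1)=F(x)+s$, one has $F^{\circ n}(x+1)=F^{\circ n}(x)+s^n$. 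Hence on every fundamental interval $F^{\circ n}$ must cross at least $|s|^n$ integer levels by the intermediate value theorem, and each such crossing produces a jump discontinuity of the induced transformation of $[0,1)$, yielding a lap. The lap-number formula \eqref{lap number} then gives $h_{\rm{top}}(f\restriction_{\hat{\Bbb{R}}})\ge\log|s|$. For $|s|\le 1$ the bound reduces to $0$ and is trivial.

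The image of the continuous $h_\Bbb{R}$ on the connected space $\mathcal{M}'_{d,s}-\mathcal{S}'$ is a connected subset of $[\log\max(|s|,1),\log(d)]$, hence a sub-interval; it remains to realize both endpoints. At the lower endpoint, for $|s|\ge 2$ I use Example \ref{Blaschke1}: an expanding Blaschke product of degree $d$ with $(d+s)/2$ zeros clustered near $0$ and $(d-s)/2$ clustered near $\infty$ is topologically conjugate to $z\mapsto z^s$ on the unit circle by the rigidity theorem \cite{MR0240824} and has circle entropy $\log|s|$, and Cayley conjugation produces the desired real rational map. For $|s|\le 1$, a Blaschke diffeomorphism close to $z\mapsto z^{\pm 1}$ with a fixed point realizes $h_\Bbb{R}=0$ when $|s|=1$, and $z\mapsto z^d+c$ for large positive $c$ gives $h_\Bbb{R}=0$ and $s=0$ when $d$ is even.

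Realizing the upper endpoint $\log(d)$ is the main obstacle. When $|s|=d$ the Cayley conjugate of $z\mapsto z^{\pm d}$ has Julia set $\hat{\Bbb{R}}$ by Theorem \ref{extremal classification 1}(c) and real entropy $\log(d)$; the intermediate range $0<|s|<d$ with $s\equiv d\pmod 2$ is the delicate case. Here I would invoke the Cantor-Julia regime of Theorem \ref{extremal classification 1}(a): the preimage $f^{-1}(I)$ of a Julia-containing compact real interval $I$ is a disjoint union of $d$ subintervals $K_1,\dots,K_d$ each mapped homeomorphically onto $I$. Prescribing $(d+s)/2$ of these restrictions to be increasing and $(d-s)/2$ decreasing yields signed preimage count $s$ on any regular $y\in I$; constancy of the topological degree then forces the gap-interval and outer-interval contributions to cancel out, pinning $\deg_{\rm top}(f\restriction_{\hat{\Bbb{R}}})=s$. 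Actually producing a real rational map realizing such a prescribed $K_i$-direction pattern---plausibly by perturbing a degree-$d$ polynomial with $d-1$ real critical points so that all critical orbits escape $I$, or by piecing together Blaschke-type models under an appropriate semi-conjugacy---is the technical heart of the proof and is where the argument requires the most care.
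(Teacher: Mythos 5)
Your overall strategy is the paper's: establish the bounds, realize both endpoints, and invoke continuity of $h_\Bbb{R}$ (Proposition \ref{dimension}) on the connected space $\mathcal{M}'_{d,s}-\mathcal{S}'$ (Proposition \ref{degree components}) to fill in the interval. Your lifting argument for the lower bound $h_\Bbb{R}\geq\log|s|$ via $F(x+1)=F(x)+s$ is a perfectly good alternative to the paper's preimage-counting Proposition \ref{lower bound}, and your realization of the lower endpoint matches the paper's use of Example \ref{Blaschke1} and $\pm z^d$.

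However, your argument has a genuine gap exactly where you say it does: realizing the upper endpoint $\log(d)$ when $0<|s|<d$. You sketch a target (a map in the regime of Theorem \ref{extremal classification 1}(a) whose $d$ preimage branches $K_i\to I$ have $(d+s)/2$ increasing and $(d-s)/2$ decreasing) but leave the actual construction undone, and your first suggested route does not work: a degree-$d$ polynomial with $d-1$ real nondegenerate critical points has $\deg_{\rm top}(f\restriction_{\hat{\Bbb{R}}})\in\{0,\pm1\}$ regardless of how the critical orbits behave, since $\infty$ is the unique pole, so perturbing such a polynomial can never reach $1<|s|<d$. Your second suggestion (``piecing together Blaschke-type models under an appropriate semi-conjugacy'') is vaguer than the fact it is meant to prove. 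What is actually needed is precisely the content of the paper's Proposition \ref{upper bound}: one writes down the explicit rational map
$$f(x)=\frac{p(x)}{\prod_{i=1}^{s-1}(x-u_i)\prod_{j=1}^{(d-s)/2}(x-v_j)^2}$$
with $s-1$ simple real poles and $(d-s)/2$ double real poles and prescribed sign alternations for $p$, checks that the restriction to $\hat{\Bbb{R}}$ has topological degree $s$ by summing the local degrees over the $d$ subintervals cut out by the poles, and then shows that for the shifted map $f(x)-a$, with $a$ large enough relative to the expansion constant at the attracting fixed point $\infty$ and the minimum of $f$ over the double-pole subintervals, the interval $[-M',M']$ becomes backward invariant, forcing $\mathcal{J}(f-a)\subset\hat{\Bbb{R}}$ and hence $h_\Bbb{R}=\log(d)$ by Theorem \ref{extremal classification}. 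Without some concrete construction of this kind, the proof is incomplete; your proposal correctly identifies the difficulty but does not resolve it.
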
 

As mentioned in Remark \ref{degree-bound}, the lower bound is a special case of \cite{MR0458501}, but we include an easy proof here:

\begin{proposition}\label{lower bound}
Let $f\in\Bbb{R}(z)$ be non-constant and of degree $d\geq 2$. Then for any point $x\in\hat{\Bbb{R}}$:
\begin{equation}\label{inequality'}
h_\Bbb{R}(f)\geq \limsup_{n\to\infty}\frac{1}{n}\log\left(\#\left\{y\in\hat{\Bbb{R}}\,|\, f^{\circ n}(y)=x\right\}\right),
\end{equation}
and  the equality is always achieved for some $x$. 
Moreover, in the absence of post-critical relations, the number of solutions of $f^{\circ n}(y)=x$ can be counted with multiplicities. 
\end{proposition}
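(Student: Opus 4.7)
The plan is to leverage the lap-number formula \eqref{lap number}. Set $l_n := l(f^{\circ n}|_{\hat{\Bbb{R}}})$, which is finite because $f|_{\hat{\Bbb{R}}}$ is multimodal (with at most $2d-2$ critical points); then \eqref{lap number} gives $h_\Bbb{R}(f) = \lim_n \tfrac{1}{n}\log l_n$. For the inequality \eqref{inequality'}, each lap is a maximal monotonic arc on which $f^{\circ n}$ is injective and therefore contains at most one preimage of any given $x$. Hence $\#\{y \in \hat{\Bbb{R}} : f^{\circ n}(y) = x\} \le l_n$ for every $x \in \hat{\Bbb{R}}$, and dividing by $n$, taking $\log$ and passing to $\limsup$ yields \eqref{inequality'}.

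To exhibit an $x$ attaining equality I would first dispose of the (easy) case $\mathrm{Crit}(f) \cap \hat{\Bbb{R}} = \emptyset$: then $f|_{\hat{\Bbb{R}}}$ is an unramified $s$-fold cover, $\#f^{-n}(x) \cap \hat{\Bbb{R}} = |s|^n$ is constant in $x$, and this matches $h_\Bbb{R}(f) = \log|s|$ exactly. In the remaining case, let $c_1,\ldots,c_K$ be the critical points of $f$ on $\hat{\Bbb{R}}$ (so $K \le 2d-2$). By the chain rule, every turning point of $f^{\circ n}|_{\hat{\Bbb{R}}}$ lies in $\bigcup_{k=0}^{n-1} f^{-k}(\{c_1,\ldots,c_K\})$; combining this with the circle-to-interval cutting convention recalled in the Notation section (applied with a base cut-point $p$ off the post-critical set), my plan is to derive a bookkeeping inequality of the form
\[
l_n \;\le\; \#f^{-n}(p) \cap \hat{\Bbb{R}} \;+\; \sum_{k=0}^{n-1} \sum_{i=1}^{K} \#f^{-k}(c_i) \cap \hat{\Bbb{R}}.
\]

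I would then extract the required $x$ by a double pigeonhole. The right-hand side has at most $1+Kn$ summands, so at least one exceeds $l_n/(Kn+1)$: either the $\#f^{-n}(p)$-term, in which case $x = p$ already works, or else some pair $(i_n, k_n)$ with $k_n < n$ gives $\#f^{-k_n}(c_{i_n}) \cap \hat{\Bbb{R}} \gtrsim l_n/(Kn)$. Since $i_n$ ranges over a finite set, some $c^* = c_{i^*}$ is selected along an infinite subsequence of $n$'s; when $h_\Bbb{R}(f) > 0$ the corresponding $k_n$ must tend to infinity (otherwise $l_n$ would grow only polynomially), and then $\tfrac{1}{k_n}\log \#f^{-k_n}(c^*)\cap\hat{\Bbb{R}} \ge \tfrac{1}{n}(\log l_n - \log(Kn+1))$ forces $\limsup_k \tfrac{1}{k}\log \#f^{-k}(c^*)\cap\hat{\Bbb{R}} \ge h_\Bbb{R}(f)$. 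Combined with the upper bound this gives equality at $x = c^*$; the case $h_\Bbb{R}(f) = 0$ is trivial at any $x \in \bigcap_n f^{\circ n}(\hat{\Bbb{R}})$. The \emph{moreover} clause should then come for free: without post-critical relations, each preimage of a $c_i$ remains a simple critical point of $f^{\circ n}$ with unambiguous local degree, so the entire argument runs with set-theoretic counts replaced by counts with multiplicity.

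The hard part will be making the lap-bookkeeping inequality fully rigorous: one must track how the laps of the multimodal circle map $f^{\circ n}|_{\hat{\Bbb{R}}}$ split between contributions from preimages of the cut point and from the (possibly confluent) preimages of the critical points of $f$, losing at most a polynomial factor in $n$. Once this combinatorial accounting is settled the remainder is elementary pigeonhole and limit manipulation.
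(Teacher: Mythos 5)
Your proposal is essentially the same argument as the paper's. The inequality direction is identical: each lap of $f^{\circ n}|_{\hat{\Bbb{R}}}$ is strictly monotonic (analyticity forces strict monotonicity), so $\#\left(f^{-n}(x)\cap\hat{\Bbb{R}}\right)\leq l(f^{\circ n}|_{\hat{\Bbb{R}}})$, and \eqref{lap number} finishes it. For equality, both proofs rest on the same key estimate: the turning points of $f^{\circ n}|_{\hat{\Bbb{R}}}$ are contained in $\bigcup_{k<n}\bigcup_i f^{-k}(c_i)\cap\hat{\Bbb{R}}$, so the lap count is bounded by a sum of preimage counts of the turning points of $f$, losing only a polynomial factor in $n$.

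The difference is purely in the finishing move. You run a double pigeonhole over the $(i,k)$-indexed summands, extract a subsequence along which a single turning point $c^*$ dominates, and observe that the selected $k_n$ must go to infinity when $h_\Bbb{R}(f)>0$. The paper argues more directly by contradiction: if $\limsup_k\frac{1}{k}\log\#\left(f^{-k}(c_i)\cap\hat{\Bbb{R}}\right)<h_\Bbb{R}(f)$ for every $i$, then there are $\alpha<h_\Bbb{R}(f)$ and $C$ with $\#\left(f^{-k}(c_i)\cap\hat{\Bbb{R}}\right)\leq C{\rm{e}}^{\alpha k}$ uniformly, so the whole bound is $\leq nC(l-1){\rm{e}}^{\alpha n}$ and hence $h_\Bbb{R}(f)\leq\alpha$, a contradiction. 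That is shorter and avoids the subsequence bookkeeping; you may want to switch to it.

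One small point that actually reflects well on your write-up: you explicitly add a cut-point term $\#\left(f^{-n}(p)\cap\hat{\Bbb{R}}\right)$ to the lap bound to account for the circle-to-interval unwrapping and flag this bookkeeping as the delicate part. The paper states the inequality $l_n-1\leq\sum_{i,k}\#\left(f^{-k}(c_i)\cap\hat{\Bbb{R}}\right)$ without that term. The cleanest fix, which makes both versions rigorous simultaneously and eliminates your first pigeonhole case, is to cut the circle at one of the turning points $c_{i_0}$; then the discontinuity points of the lifted map of $[0,1)$ are preimages of $c_{i_0}$ and are already accounted for by the sum (possibly after extending the index to $k\leq n$, which costs nothing). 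Your separate treatment of the unramified-covering case $\mathrm{Crit}(f)\cap\hat{\Bbb{R}}=\emptyset$ is also a worthwhile addition, since in that case the set of $c_i$'s is empty and the displayed inequality is vacuous; the paper handles this implicitly. The treatment of the \emph{moreover} clause is the same in both: absence of post-critical relations means at most one critical point lies in the backward orbit of $x$, so counting with multiplicity changes the count by a bounded factor only.
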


\begin{proof} 
The  multimodal circle map 
$f^{\circ n}\restriction_{\hat{\Bbb{R}}}:\hat{\Bbb{R}}\rightarrow\hat{\Bbb{R}}$ 
(after being pulled back to an interval map via the bijection 
$x\in[0,1)\mapsto{\rm{e}}^{2\pi{\rm{i}}x}\in\hat{\Bbb{R}}$) has strictly monotonic pieces as it is analytic.
Hence given $x\in\hat{\Bbb{R}}$, no two points of 
$\left\{y\in\hat{\Bbb{R}}\,|\, f^{\circ n}(y)=x\right\}$ are in the same lap of $f^{\circ n}$, so 
$$h_\Bbb{R}(f)=h_{\rm{top}}\left(f\restriction_{\hat{\Bbb{R}}}\right)
=\lim_{n\to\infty}\frac{1}{n}\log\left(l\left(\left(f\restriction_{\hat{\Bbb{R}}}\right)^{\circ n}\right)\right)\geq 
\limsup_{n\to\infty}\frac{1}{n}\log\left(\#\left\{y\in\hat{\Bbb{R}}\,|\, f^{\circ n}(y)=x\right\}\right),
$$
where the lap number of laps of a multimodal map $g$ is denoted by $l(g)$.
Next, we claim that the equality can be achieved. Let 
$c_1,\dots,c_{l-1}\in \hat{\Bbb{R}}$ be turning points of  $f\restriction_{\hat{\Bbb{R}}}$.
The set of turning points of the iterate 
$\left(f\restriction_{\hat{\Bbb{R}}}\right)^{\circ n}$
is 
$$\bigcup_{1\leq i\leq l-1}\bigcup_{0\leq k\leq n-1}\left\{y\in \hat{\Bbb{R}}\,|\, f^{\circ k}(y)=c_i\right\};$$
which yields the following upper bound for the number of laps of this iterate:
\begin{equation}\label{inequality1}
l\left(\left(f\restriction_{\hat{\Bbb{R}}}\right)^{\circ n}\right)-1
\leq\sum_{i=1}^{l-1}\sum_{k=0}^{n-1}\#\left\{y\in \hat{\Bbb{R}}\,|\, f^{\circ k}(y)=c_i\right\}.
\end{equation}
Now, aiming for a contradiction, if 
$$\forall 1\leq i\leq l-1: \limsup_{n\to\infty}\frac{1}{n}\log\left(\#\left\{y\in\hat{\Bbb{R}}\,|\, f^{\circ n}(y)=c_i\right\}\right)<h_\Bbb{R}(f),$$
then one can find $\alpha<h_\Bbb{R}(f)$ and $C\gg 0$ with 
$$\forall 1\leq i\leq l-1, \forall n\geq 0: \#\left\{y\in\hat{\Bbb{R}}\,|\, f^{\circ n}(y)=c_i\right\}\leq C{\rm{e}}^{\alpha n}.$$
But then the right-hand side of \eqref{inequality1} would be at most $nC(l-1){\rm{e}}^{\alpha n}$ and consequently:
$$h_\Bbb{R}(f)=\lim_{n\to\infty}\frac{1}{n}\log\left(l\left(\left(f\restriction_{\hat{\Bbb{R}}}\right)^{\circ n}\right)\right)\leq 
\lim_{n\to\infty}\frac{1}{n}\log\left(nC(l-1){\rm{e}}^{\alpha n}\right)=\alpha;$$
a contradiction. To finish the proof, observe that when there is not any post-critical relation, there is at most one critical point $c$ in the backward orbit of $x$ under $f\restriction_{\hat{\Bbb{R}}}$
and counting the number of solutions of $f^{\circ n}(y)=x$ with multiplicities raises this number through multiplication by a factor which is at most the multiplicity of $c$; this cannot affect the exponential growth rate.
\end{proof}

\begin{remark}
The preceding proposition provides an interesting interpretation of real entropy: Given $f\in\Bbb{R}(z)$ and $\epsilon>0$, for every 
$x\in\hat{\Bbb{R}}$, as $n\to\infty$ the cardinality 
$$\#\left\{y\in\hat{\Bbb{R}}\,|\, f^{\circ n}(y)=x\right\}$$
 is $${\rm{o}}\left({\rm{exp}}\left(\left(h_\Bbb{R}(f)+\epsilon\right)n\right)\right);$$
and $h_\Bbb{R}(f)$ is the smallest number for which this holds.\\
\indent The inequality \eqref{inequality'} can also be written as 
\begin{equation}\label{inequality}
h_\Bbb{R}(f)\geq\log(d)+\limsup_{n\to\infty}\frac{1}{n}\log\left(\frac{1}{d^n}\#\left\{y\in\hat{\Bbb{R}}\,|\, f^{\circ n}(y)=x\right\}\right).
\end{equation}
The term $\frac{1}{d^n}\#\left\{y\in\hat{\Bbb{R}}\,|\, f^{\circ n}(y)=x\right\}$  on the right-hand side of \eqref{inequality}
is reminiscent of the constructions of the measure of maximal entropy $\mu_f$ for the rational map $f:\Bbb{C}\rightarrow\Bbb{C}$ as outlined in \cite{MR736568}: for any non-exceptional point $x\in\Bbb{C}-\mathcal{E}(f)$, the empirical measures $\frac{1}{d^n}\sum_{\left\{y\in\hat{\Bbb{C}}\,|\, f^{\circ n}(y)=x\right\}}\delta_y$ weakly converge to $\mu_f$ as $n\to\infty$ where  in averaging the Dirac masses the multiplicities have been taken into account.  Taking $x$ in  \eqref{inequality} to be from $\hat{\Bbb{R}}-\mathcal{E}(f)$, we conclude that:
$$\limsup_{n\to\infty}\frac{1}{d^n}\#\left\{y\in\hat{\Bbb{R}}\,|\, f^{\circ n}(y)=x\right\}\leq\mu_f(\hat{\Bbb{R}}).$$ 
If $\mu_f(\hat{\Bbb{R}})=0$, the argument of the logarithm in \eqref{inequality} tends to zero, so an interesting behavior could be expected from  $\limsup_{n\to\infty}\frac{1}{n}\log\left(\frac{1}{d^n}\#\left\{y\in\hat{\Bbb{R}}\,|\, f^{\circ n}(y)=x\right\}\right)$. The situations where the measure of maximal entropy assigns a positive value to the real circle have been exhaustively studied in Theorems \ref{extremal classification}, \ref{extremal classification 1}.
\end{remark}

We  next need to show that the real entropy value $\log(d)$ is realized in $\mathcal{M'}_{d,s}$. Notice that according to Theorem \ref{extremal classification}, for any such map the Julia set must be contained in $\hat{\Bbb{R}}$. This motivates the following:

\begin{proposition}\label{upper bound}
Suppose $d$ and $s\in\{-d,\dots,d\}$ are of the same parity. There exists an open subset of real rational maps $f$ of degree $d$ with $\deg\left(f\restriction_{\hat{\Bbb{R}}}\right)=s$  whose Julia sets are completely real.
\end{proposition}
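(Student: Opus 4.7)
The strategy is to first produce, for each admissible pair $(d,s)$, a hyperbolic real rational map $f_0$ of degree $d$ with $\deg_{\rm{top}}(f_0\restriction_{\hat{\Bbb{R}}})=s$ and $\mathcal{J}(f_0)\subseteq\hat{\Bbb{R}}$, and then deduce openness automatically. Indeed, hyperbolic maps form an open subset of ${\rm{Rat}}_d(\Bbb{R})$ and are $\mathcal{J}$-stable, so under a real perturbation the Julia set moves by a holomorphic motion; the Schwarz reflection argument in the proof of Proposition \ref{temp1'} shows that this motion commutes with complex conjugation and therefore preserves $\mathcal{J}(f)\subseteq\hat{\Bbb{R}}$, while local constancy of $\deg_{\rm{top}}(f\restriction_{\hat{\Bbb{R}}})$ is supplied by Proposition \ref{degree components}.

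For the existence of $f_0$ in the extremal case $|s|=d$, I would take a small real perturbation of $z^d$ among Blaschke products, e.g.\ $B(z)=z\prod_{i=1}^{d-1}(z-a_i)/(1-\bar a_i z)$ with small $|a_i|$. By Schwarz's lemma there is an attracting fixed point near the origin and, via the involution $z\mapsto 1/\bar z$, another near $\infty$; the unit circle is forward-invariant and expanding on it, so $\mathcal{J}(B)=\{|z|=1\}$, and conjugation by the Cayley transform (with post-composition by $z\mapsto 1/z$ when $s=-d$) delivers the desired real map. For the range $0<|s|<d$, I would instead use the generalized Blaschke products of Example \ref{Blaschke1} with $(d+s)/2$ zeros inside and $(d-s)/2$ zeros outside the unit disk, choosing the parameters so that the circle-map derivative
\[
\phi'(\theta_0)=\sum_{i=1}^{(d+s)/2}\frac{1-|a_i|^2}{|{\rm{e}}^{{\rm{i}}\theta_0}-a_i|^2}-\sum_{j=1}^{(d-s)/2}\frac{|b_j|^2-1}{|{\rm{e}}^{{\rm{i}}\theta_0}-b_j|^2}
\]
has modulus less than one at a prospective fixed point ${\rm{e}}^{{\rm{i}}\theta_0}$ and every critical orbit is absorbed by its basin; this places $B$ in case (a) of Theorem \ref{extremal classification 1} with $\mathcal{J}(B)\subseteq\{|z|=1\}$.

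The special case $s=0$ (necessarily with $d$ even) requires separate constructions: for $d=2$ the real quadratic $z^2+c$ with $c<-2$ is hyperbolic with a Cantor Julia set in $\Bbb{R}$, and for larger even $d$ I would use suitable real multimodal perturbations of the Chebyshev polynomial $T_d$ in which the $d-1$ real critical values all escape to infinity through the attracting basin at $\infty$, producing hyperbolic maps with real Cantor Julia sets. I expect the main obstacle to be the intermediate range $0<|s|<d$: a delicate parameter analysis---akin to the one behind part (i) of Theorem \ref{temp3}---is needed to balance the positive contributions of the $a_i$-factors against the negative contributions of the $b_j$-factors to $\phi'$ so that a single attracting cycle is obtained on the unit circle, with no spurious attracting cycle appearing elsewhere in $\hat{\Bbb{C}}$. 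Once these model maps are secured, the openness mechanism outlined above yields the desired open subset in each component of the domain.
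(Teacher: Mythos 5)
You take a genuinely different route from the paper: build a hyperbolic model map $f_0$ in each component of $\mathcal{M}'_d-\mathcal{S}'$ with $\mathcal{J}(f_0)\subseteq\hat{\Bbb{R}}$, then propagate this property to a neighborhood using $\mathcal{J}$-stability and the conjugation-equivariance of the induced holomorphic motion (the Schwarz-reflection computation from Proposition \ref{temp1'}). The openness step is sound, and your model maps in the extreme cases $|s|=d$ (Blaschke products $z\prod(z-a_i)/(1-\bar{a_i}z)$ with small $a_i$) and $s=0$ (perturbed Chebyshev polynomial) are correct and essentially match the paper's.

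In the intermediate range $0<|s|<d$, however, your proposal has a genuine gap, and you flag it yourself. Once you allow $(d+s)/2$ zeros inside and $(d-s)/2$ zeros $b_j$ outside the unit circle, the Blaschke product no longer preserves the disk, so the Schwarz-lemma argument that controls all critical orbits when $|s|=d$ is gone. Arranging a fixed point ${\rm{e}}^{{\rm{i}}\theta_0}$ on the circle with $|\phi'(\theta_0)|<1$ is a matter of elementary parameter tuning, but this does not by itself place the map in case (a) of Theorem \ref{extremal classification 1}: you must also show that every one of the $2d-2$ critical orbits is absorbed by the basin of this single cycle and that no other attracting or indifferent behavior arises, on or off the circle. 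The sentence beginning ``I expect the main obstacle'' names precisely the missing content of the proof; until that analysis is supplied, the model map $f_0$ has not been constructed in this range and the openness machinery has nothing to act on.

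The paper avoids this difficulty by not going through hyperbolicity at all. It works with explicit maps $f(x)-a$ where $f(x)=p(x)\big/\bigl(\prod_i(x-u_i)\prod_j(x-v_j)^2\bigr)$, places an attracting fixed point at $\infty$ with multiplier $1/\lambda<1/2$, and for $a$ large finds a compact interval $[-M',M']$ satisfying $f^{-1}([-M',M'])\subseteq(-M',M')$ by combining the escape estimate $|f(x)-a|>M'$ for $|x|>M'$ with a direct count of real preimages: each of the $s$ surjective degree-$1$ branches contributes at least one real preimage of a regular value, each of the $(d-s)/2$ double-pole branches contributes at least two, and $s+2\cdot\tfrac{d-s}{2}=d$ exhausts the fiber, so the preimage is entirely real and $\mathcal{J}(f-a)\subseteq\hat{\Bbb{R}}$ unconditionally. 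If you want to keep the Blaschke route you should either carry out a concrete critical-orbit analysis (for instance, identify a degeneration in parameter space where the dynamics is explicit and perturb from there) or replace the hyperbolicity requirement with a backward-invariance argument of the paper's type.
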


\begin{proof}
We outline a construction of such an $f$ for which the Julia set is contained in a subinterval of the real line.  Let us first assume $s>0$ and start with a prototype for a map $f\in{\rm{Rat}}_d(\Bbb{R})$ for which 
$f\restriction_{\hat{\Bbb{R}}}:\hat{\Bbb{R}}\rightarrow\hat{\Bbb{R}}$
is of topological degree $s$. One can for instance consider
\begin{equation}\label{the function}
f(x)=\frac{p(x)}{\prod_{i=1}^{s-1}(x-u_i)\prod_{j=1}^{\frac{d-s}{2}}(x-v_j)^2};
\end{equation}
where $p(x)$ is a degree $d$ polynomial whose sign at points $-\infty<u_1<\dots<u_{s-1}<v_1$ vary alternatingly while it gets positive values at $v_1<v_2<\dots<v_{\frac{d-s}{2}}<+\infty$. This is a map with $\frac{d+s}{2}-1$ vertical asymptotes at 
$\frac{d-s}{2}$ of which $f$ takes $\infty$ with multiplicity $2$; see Figure \ref{fig:3}.
Restricted to any of the $s$ subintervals below
\begin{equation}\label{former}
[-\infty,u_1], [u_1,u_2],\dots,[u_{s-2},u_{s-1}], [u_{s-1},v_1]
\end{equation}
$f$ is of degree $+1$ and attains every value in $\hat{\Bbb{R}}$ because it tends to $-\infty$ at the left end and to $+\infty$ at the right end. 
 On the other hand, the restriction of $f$ to each of the subintervals 
\begin{equation}\label{latter}
[v_1,v_2],\dots,\left[v_{\frac{d-s}{2}-1},v_{\frac{d-s}{2}}\right], \left[v_{\frac{d-s}{2}},+\infty\right]
\end{equation}
tends to $+\infty$ at the endpoints and is hence of topological degree zero. Adding up the degrees of restrictions to the intervals in \eqref{former} and \eqref{latter} (that cover $\hat{\Bbb{R}}$), we deduce that 
$\deg\left(f\restriction_{\hat{\Bbb{R}}}\right)=s$. \\
\indent
\begin{figure}[ht!]
\center
\includegraphics[height=13.5cm]{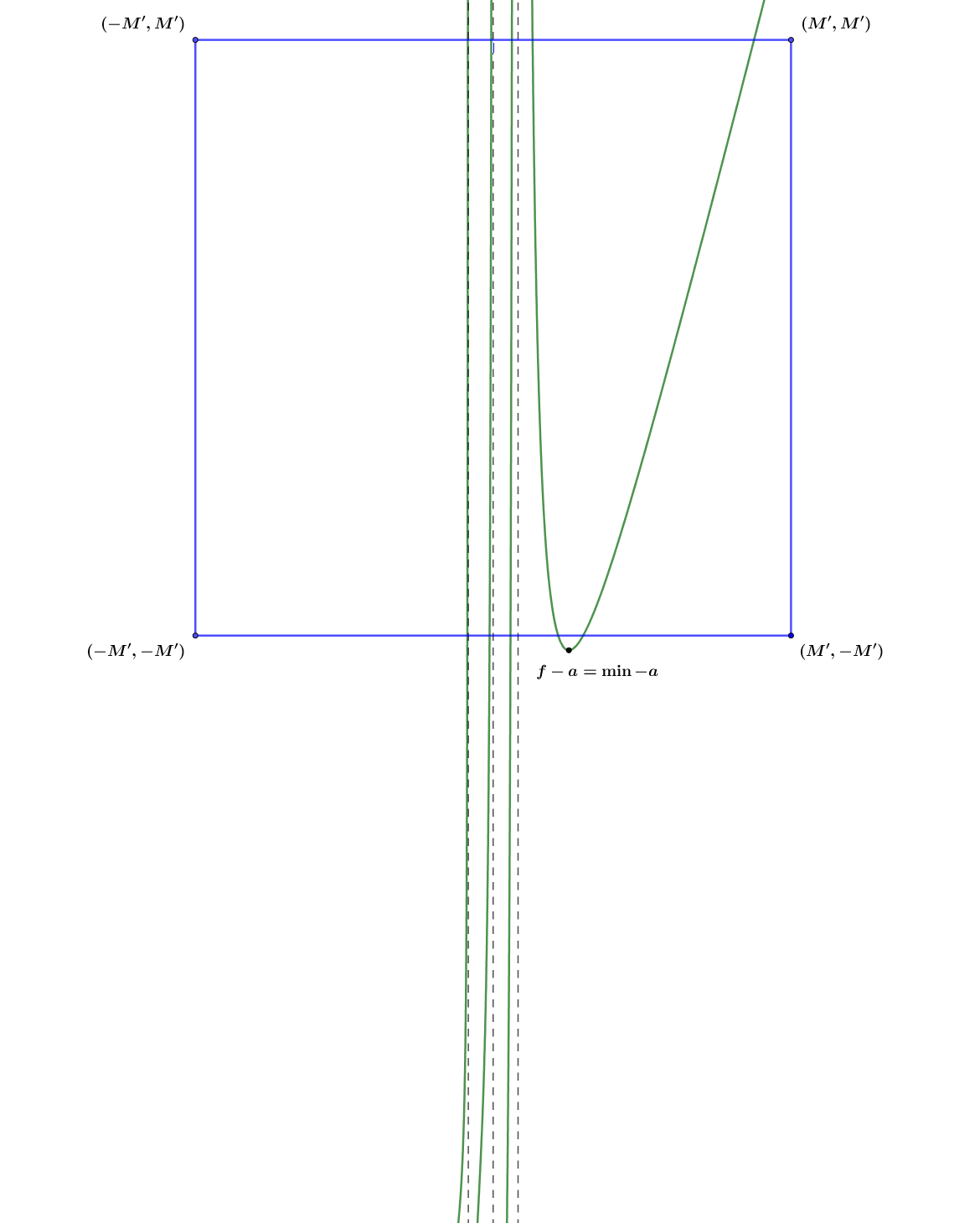}
\caption{The graph of a real rational map $f(x)-a$ of degree $d=5$ whose restriction to the reals is of topological degree $s=3$. The vertical asymptotes are shown by dotted lines. The interval $[-M',M']$ is backward-invariant and thus contains the Julia set. In this example
$$
f(x)=\frac{4(x^5+3x^2-1)}{(x^2+x)(x-1)^2},\,  a=35,\, M'=12;
$$
and the minimum value that $f$ attains over $[1,+\infty)$ is $\min=22.411$.
As for the other parameters appeared in the proof, one has $|f(x)|>\lambda|x|$ once $|x|>M$ where $\lambda=4$ and $M=1$.}
\label{fig:3}
\end{figure}
Next, we  argue that the Julia sets of certain maps of the form \eqref{the function} are contained in $\hat{\Bbb{R}}$. Notice that $x=\infty$ is a fixed point of $f$ with multiplier given by the reciprocal of the leading coefficient of $p$. Hence we can pick $p(x)$ so that  $\infty$ is an attracting fixed point whose multiplier is an arbitrarily chosen number 
$\frac{1}{\lambda}$ from $\left(0,\frac{1}{2}\right)$; so
$|f(x)|>\lambda|x|$ for $|x|$ large enough where $\lambda>2$ is fixed. Pick an interval $[-M,M]$ outside which this inequality is satisfied.  We claim that there are parameters $a$ and $M'>M$ for which $[-M',M']$ is backward-invariant under $f(x)-a$; the property that implies the Julia set of $f(x)-a$ (which is in the same normal form \eqref{the function}) is a subset of $[-M',M']$, and thus finishes the proof.   Denote the smallest value that $f$ attains over the intervals in \eqref{latter} by $\min$. If $\min-a<-M'$, then every value in $[-M',M']$ is realized over each of  the intervals in \eqref{former}, \eqref{latter} as their image under $f(x)-a$ contains $[\min-a,+\infty]\supset [-M',M']$. Every such a value, if regular,  is realized at least once over each of the former $s$ subintervals and at least twice over each of the latter  $\frac{d-s}{2}$ subintervals due to the discussion above on the degrees of the corresponding restrictions. But, aside from finitely many exceptions, the fibers of the ambient map $f:\hat{\Bbb{C}}\rightarrow\hat{\Bbb{C}}$ are of cardinality $d=s+2.\frac{d-s}{2}$. Consequently, $f^{-1}\left([-M',M']\right)$ must be a subset of $\Bbb{R}$ in such a situation. We do not want points of this preimage to lie outside $[-M',M']$. Notice that if $|x|>M'>M$ one can write:
$$
|f(x)-a|>\lambda|x|-a>\lambda M'-a;
$$    
and hence it suffices to have $\lambda M'-a>M'$. Combining all these, we first choose $M'>M$ so large that 
$M'+\min<(\lambda-1)M'$ -- which is possible as $\lambda>2$ -- and we then take $a$ to be between these two numbers. Solving these inequalities for $a$, we
conclude that starting with a real map $f$ in the form \eqref{the function} with the open conditions imposed on it -- that comes with the associated constants $\lambda, M, \min$ -- for all $a$ satisfying 
$$a>\min+M,\frac{\lambda-1}{\lambda-2}.\min$$ 
the Julia set of $f(x)-a$ is contained in $\hat{\Bbb{R}}$. \\
\indent
For negative degrees,  one needs to slightly modify the normal form \eqref{the function} as   
\begin{equation}\label{the function'}
f(x)=\frac{-p(x)}{\prod_{i=1}^{s-1}(x-u_i)\prod_{j=1}^{\frac{d-s}{2}}(x-v_j)^2};
\end{equation}
with the same conditions on the sign of the degree $d$ polynomial $p$ at 
$$
-\infty<u_1<\dots<u_{s-1}<v_1<\dots<v_{\frac{d-s}{2}}<+\infty.
$$
The degree of the induced map $\hat{\Bbb{R}}\rightarrow\hat{\Bbb{R}}$ would be $-s<0$; and in the subsequent discussion, one only needs to replace $\min$ appeared above with the largest value $\max$ that $f$ attains over the subintervals in \eqref{latter}. \\
\indent
Finally, if $s=0$, the degree $d$ must be even and it suffices to construct a real hyperbolic polynomial $f$ of degree $d$ with real entropy $\log(d)$ as then by Theorem \ref{entropy constant over hyperbolic} the entropy would be $\log(d)$ throughout some sufficiently small neighborhood of $f$ which, according to Theorem \ref{extremal classification}, means that the corresponding Julia sets have no non-real points. Consider the $d^{\rm{th}}$ Chebyshev polynomial $T_d$ given by 
$T_d\left(z+\frac{1}{z}\right)=z^d+\frac{1}{z^d}$. The Julia set is $[-2,2]$ and $\pm2$ are the only finite critical values.  The polynomial $f(z):=2.T_d(z)$ is then hyperbolic since its critical values $\pm4$ lie outside the interval $[-2,2]$ and it is not hard to see that $\left|2.T_d(x)\right|>2|x|$ if $x\in\Bbb{R}-[-2,2]$; so the critical orbits converge to $\infty$.  The interval $[-2,2]$ is backward-invariant under the Chebyshev polynomial $T_d(z)$ and hence the preimage of $[-4,4]$ under $2.T_d(z)$ lies in the smaller interval $[-2,2]$. We conclude that the Julia set of $f(z)=2.T_d(z)$ is a subset of $[-2,2]$.
\end{proof}

\begin{proof}[Proof of Proposition \ref{entropy values}]
Given a real rational map $f$ for which $f\restriction_{\hat{\Bbb{R}}}$ is of degree $s$ with $|s|>1$, for a point $x\in\hat{\Bbb{R}}$ away from the forward orbits of critical points, $\left(f\restriction_{\hat{\Bbb{R}}}\right)^{-n}(x)$ has at least $|s|^n$ elements for each $n$ and thus $h_{\Bbb{R}}(f)\geq\log(|s|)$ by inequality \eqref{inequality'}. This lower bound in Proposition \ref{entropy values} is always achieved by the constructions of Example \ref{Blaschke1}. For 
$s=0\text{ or }\pm1$, the restrictions of polynomials $\pm z^d$ to $\hat{\Bbb{R}}$ is of entropy zero and of topological degree $0$ (for $d$ even) or $\pm 1$ (for $d$ odd). \\
\indent
The upper bound $\log(d)$ for $h_\Bbb{R}$ is automatic and Proposition \ref{upper bound} indicates that it is indeed sharp. 
\end{proof}

\bibliographystyle{alpha}
\bibliography{bib2}

\end{document}